\documentclass[final,leqno,hidelinks]{siamltex}

%%%%%%%%%%% packages %%%%%%%%%%%%%%%%%%%%%%%%%%%%%%%%%%%%%%%%%%%
\usepackage{graphicx,amsmath,amsfonts,latexsym,amssymb,hyperref,comment,subeqnarray}
\usepackage{hyperref,mathrsfs}
\usepackage[mathscr]{euscript}
\usepackage{colortbl}
%%%%%%%%%%%%%%%%%%%%%%%%%%%%%%%%%%%%%%%%%%%%%%%%%%%%%%%%%%%%%%%
%%%%%%%%%%%%% notations %%%%%%%%%%%%%%%%%%%%%%%%%%%%%%%%%%%

%\def\que#1{{#1}}
%\def\cor#1{{#1}}

\newtheorem{remark}{Remark}[section]
\def\mbx{\mathbf{x}}

\def\dx{\,\operatorname{dx}}
\def\dbx{\,\operatorname{dx}}
\def\dy{\,\operatorname{dy}}
\def\ds{\,\operatorname{ds}}
\def\O{\Omega}
\def\G{\Gamma}
\def\cI{\mathcal I}

\def\tS{{\widetilde S}}

\def\tO{\widetilde{\Omega}}
\def\tT{\widetilde{T}}

\def\cbe{\gamma}
\def\div{\nabla\cdot}
\def\grad{\nabla}
\def\eps{\epsilon}
\def\cE{\mathcal{E}_h}
\def\cEz{\mathring{\mathcal{E}}_h}
\def\Tau{\mathcal{T}}

\def\PHb{\widetilde{H}_\beta}

\def\tW{\widetilde{W}}

\def\PHz{\widetilde{H}_{\G,0}}
\def\tHi{\widetilde{H}_{\G}}

\def\la{\langle}

\def\ra{\rangle}

\newcommand{\<}{\left\langle}
\renewcommand{\>}{\right\rangle}
\def\lam{\lambda}
\def\Lam{\Lambda}
\def\nab{\nabla}

\def\bnu{{\boldsymbol\nu}}
\def\bn{\bnu}
\def\bal{{\boldsymbol\alpha}}
\renewcommand\O{\Omega}
\def\p{\partial}
\newcommand{\vertiii}[1]{{\left\vert\kern-0.25ex\left\vert\kern-0.25ex\left\vert
    #1\right\vert\kern-0.25ex\right\vert\kern-0.25ex\right\vert}}
\newcommand{\at}[1]{\kern-0.75ex\mid_{#1}}
\newcommand{\pp}[2]{\frac{\partial {#1}}{\partial {#2}}}
\newcommand{\jump}[2]{{\left[#1\right]}_{{#2}}}
\newcommand{\djump}[2]{{\left[\kern-0.25ex\left[#1\right]\kern-0.25ex\right]}_{{#2}}}

\newcommand{\cP}{\mathcal{P}}

\def\Span{\mathrm{\,Span\,}}

\def\forany{\quad\forall}

\newcommand{\g}{\gamma}
\numberwithin{equation}{section}
%%%%%%%%%%%%%%%%%%%%%%%%%%%%%%%%%%%%%%%%%%%%%%%%%%%%

% Triple bar Norm
\DeclareFontEncoding{FMS}{}{}
\DeclareFontSubstitution{FMS}{futm}{m}{n}
\DeclareFontEncoding{FMX}{}{}
\DeclareFontSubstitution{FMX}{futm}{m}{n}
\DeclareSymbolFont{fouriersymbols}{FMS}{futm}{m}{n}
\DeclareSymbolFont{fourierlargesymbols}{FMX}{futm}{m}{n}
\DeclareMathDelimiter{\tbar}{\mathord}{fouriersymbols}{152}{fourierlargesymbols}{147}

%%%%%%%%%%%%%%%%%%%%%%%%%%%%%%%%%%%%%%%%%%%%%%%%%%%%%%%

\title{A Nonconforming Immersed Finite Element Methods for Elliptic Interface Problems}

% The thanks line in the title should be filled in if there is
% any support acknowledgement for the overall work to be included
% This \thanks is also used for the received by date info, but
% authors are not expected to provide this.

\author{
Tao Lin
\thanks{Department of Mathematics, Virginia Tech, Blacksburg, VA 24061, tlin@math.vt.edu}
\and
Dongwoo Sheen\thanks{Department of Mathematics, and Interdisciplinary
  Program in Computational Science \& Technology, Seoul National University, Seoul 08826, Korea, dongwoosheen@gmail.com}
\and
Xu Zhang\thanks{Department of Mathematics and Statistics, Mississippi State University, Mississippi State, MS 39762, xuzhang@math.msstate.edu}
}

\begin{document}
\thanks{This research was partially supported by the National Science
  Foundation Grant (DMS-1016313, DMS-1720425), NRF--2017R1A2B3012506 and NRF-2015M3C4A7065662 in part.
}
\maketitle

\begin{abstract}
A new immersed finite element (IFE) method is developed for
second--order elliptic problems with discontinuous diffusion coefficient.
The IFE space is constructed based on the rotated--$Q_1$ nonconforming
finite elements with the integral-value degrees of freedom. 
The standard nonconforming Galerkin method is employed in this IFE method without any penalty stabilization term.
Error estimates in energy-- and $L^2$--norms are proved to be better than
$O(h\sqrt{|\log h|})$ and $O(h^2|\log h|)$, respectively, where
the logarithm factors reflect jump discontinuity.
Numerical results are reported to confirm our analysis.

\end{abstract}

\begin{keywords}
immersed finite element, nonconforming, rotated-$Q_1$, Cartesian mesh, elliptic interface problems
\end{keywords}

\begin{AMS}
35R05, 65N15, 65N30
\end{AMS}

\pagestyle{myheadings}
\thispagestyle{plain}
%\markboth{TEX PRODUCTION AND V. A. U. THORS}{SIAM MACRO EXAMPLES}

\section{Introduction}
We consider the second-order elliptic interface problem:
\begin{subeqnarray}\label{eq:BVP}
% \nonumber to remove numbering (before each equation)
  -\nabla\cdot (\beta \nabla u)  &=& f~~~  \mbox{in}~~ \O^-\cup \O^+, \slabel{eq: elliptic PDE} \\
  u  &=& g~~~
  \mbox{on}~~\partial \O, \slabel{eq: elliptic BC}
\end{subeqnarray}
where, without loss of generality, we assume that a
$C^2$-continuous interface curve $\G$
separates the physical domain $\O$ into two sub-domains $\O^+$ and
$\O^-$, such that $\overline{\O} = \overline{\O^+\cup\O^-\cup\G}$, see an illustration in Figure \ref{fig: domain}.
%We assume that if $\G$ meets $\p\O$, the angle of intersection is right angle.
The physical domain $\O\subset \mathbb{R}^2$ is assumed to be occupied by two materials such that the diffusion coefficient $\beta(x,y)$ is discontinuous across the interface $\G$, and it is assumed to be
a piecewise constant function defined by
\begin{equation}\label{eq: beta discontinuity}
\beta(x,y) = \left\{\begin{array}{lll}
\beta^-\quad \mbox{if}~~(x,y)\in \O^-,\\
\beta^+\quad \mbox{if}~~(x,y)\in \O^+,
\end{array}\right.
\end{equation}
such that $\min\{\beta^-, \beta^+\} > 0$. Across the interface $\G$, the solution and the normal component of the flux are assumed to be continuous, \emph{i.e.},
\begin{subeqnarray} \label{eq:jump_conditions}
% \nonumber to remove numbering (before each equation)
\jump{u}{\G} &=& 0,\slabel{eq: elliptic jump}\\
\djump{\bn\cdot\beta \nabla u}{\G} &=& 0,\slabel{eq: elliptic flux jump}
\end{subeqnarray}
where $[v]_\G = v^+|_\G - v^-|_\G$, and $\djump{\bnu\cdot\beta\grad u}{\Gamma}
= \bnu^+\cdot\beta^+\grad u^+
+ \bnu^-\cdot\beta^-\grad u^-$, with ${\bn}$ the unit normal of $\Gamma$.
%Without loss generality, we will assume $g = 0$ in most of the discussions from now on because the IFE methods and related
%error analysis presented in this article can be readily extended through standard procedures to the case in which $g \not = 0$.

\begin{figure}[htb]
\centerline{
\hbox{\includegraphics[width=0.32\textwidth]{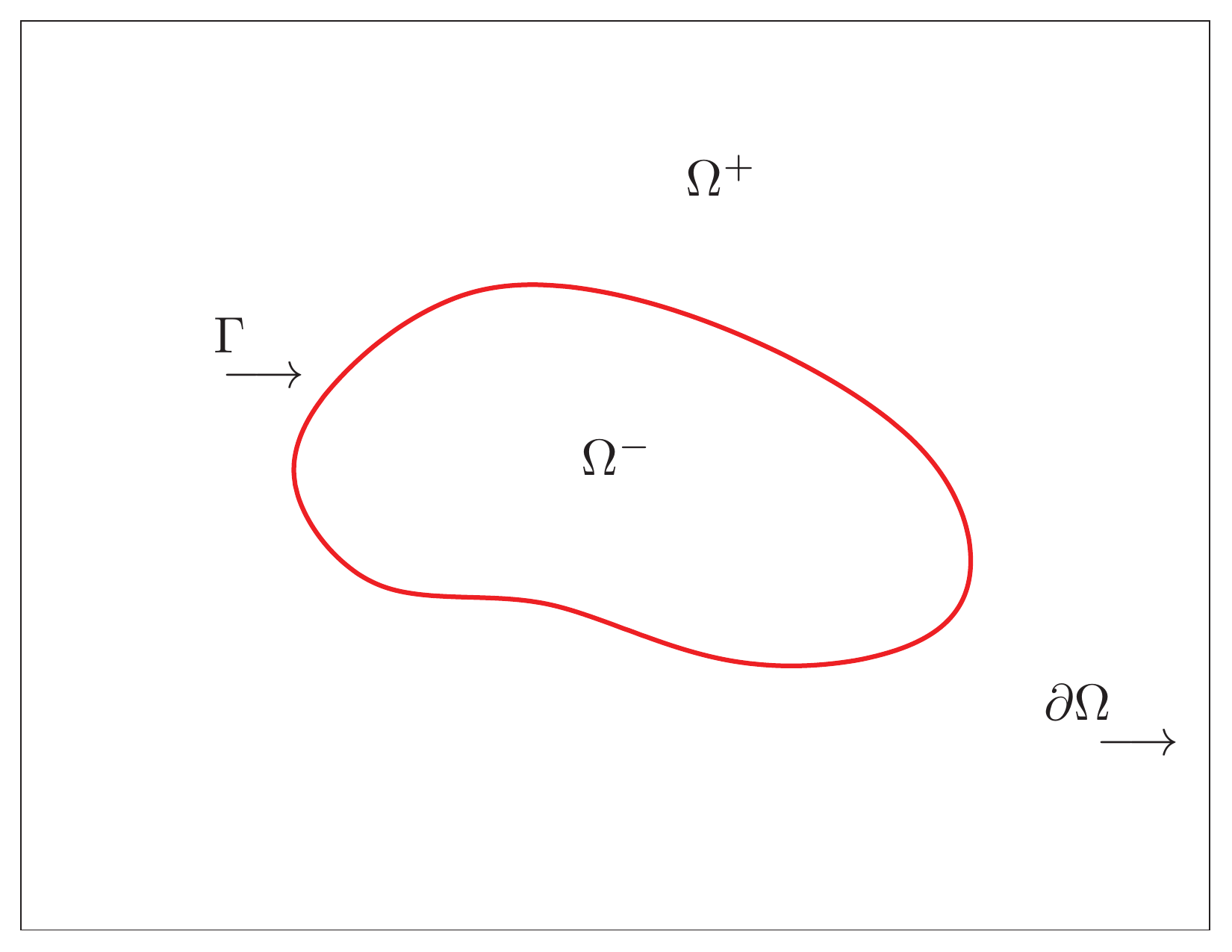}}
\hbox{\includegraphics[width=0.32\textwidth]{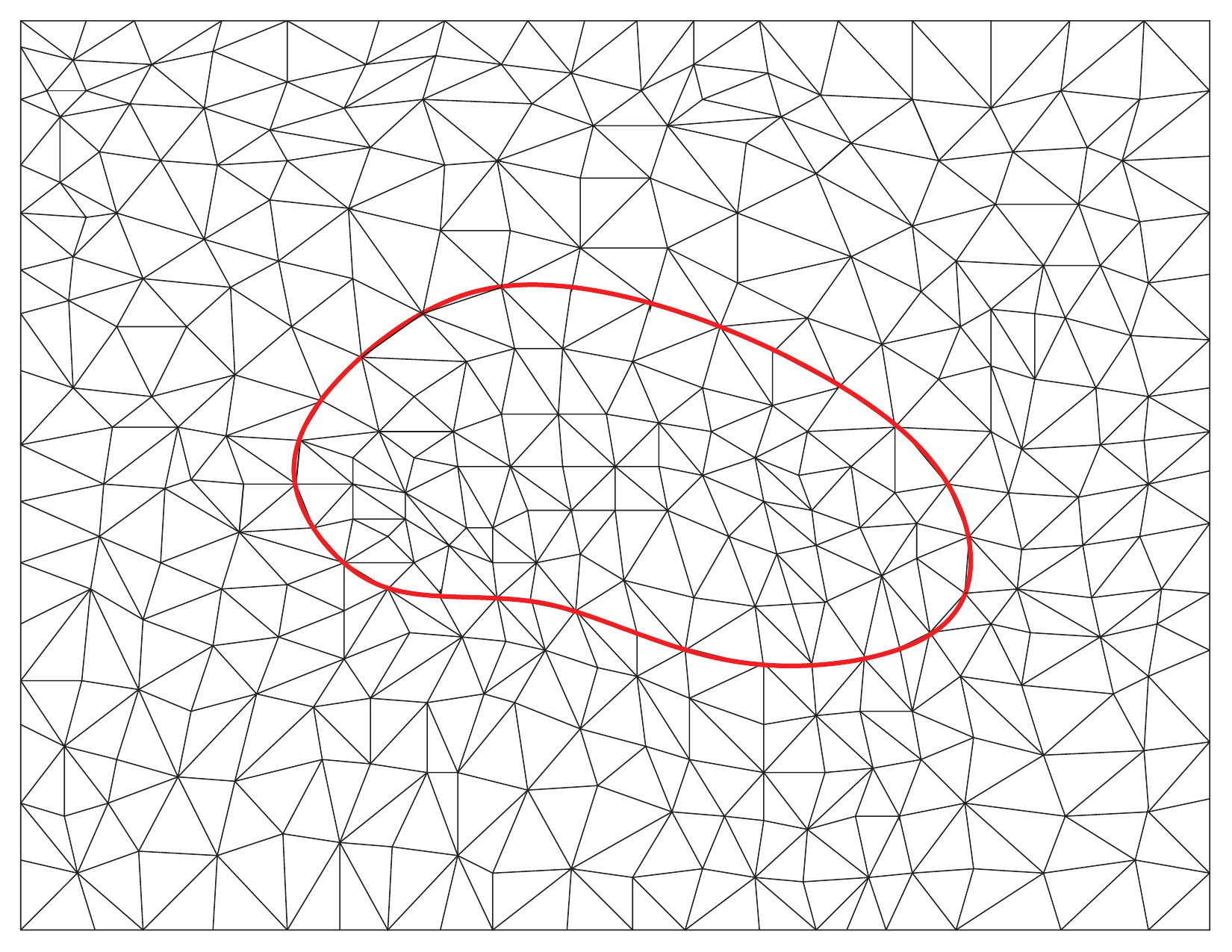}}
\hbox{\includegraphics[width=0.32\textwidth]{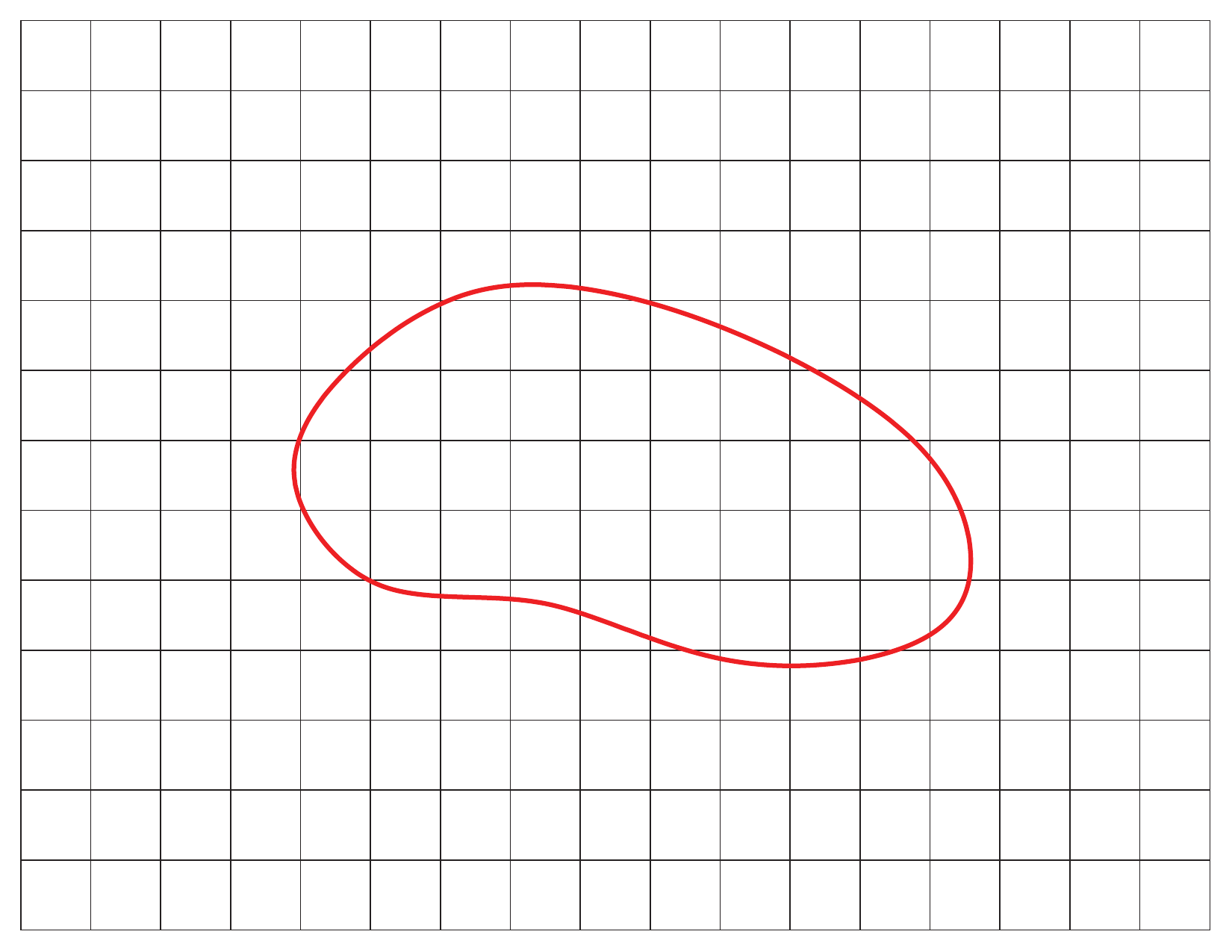}}
}
\caption{A geometry of interface problems. Body-fitting mesh and non-body-fitting mesh.}
\centering
\label{fig: domain}
\end{figure}
Conventional finite element methods (FEM) can solve this elliptic interface problem satisfactorily provided that solution meshes are shaped to fit the material interface \cite{1998ChenZou}; otherwise the accuracy of the solution is uncertain \cite{1970Babuska}. \emph{Immersed finite element} (IFE) methods \cite{2017CaoZhangZhang, 2010ChouKwakWee, 2007GongLiLi, 2017GuzmanSanchezSarkis, 2014JiChenLi, 2004LiLinLinRogers, 2003LiLinWu,2013LinSheenZhang, 2015LinYangZhang1, 2010VallaghePapadopoulo}, on the other hand, do not require meshes to fit the interface. Hence, if desired, Cartesian meshes can be used to solve interface problems which is advantageous in many simulations. For example, in particle-in-cell methods for plasma particle simulations \cite{2005KafafyLinLinWang, 2007KafafyWang}, it is preferable to solve the governing electric potential interface problem on Cartesian meshes for efficient particle tracking. Also, IFE methods, in either a standard fully discrete or a semi discrete (the method of lines) formulation,  can be used to solve time dependent problems with moving interfaces \cite{2013HeLinLinZhang,  2013LinLinZhang1} on a fixed Cartesian mesh throughout the whole simulation.

The basic idea of IFE methods is to locally modify finite element functions on interface elements to fit the interface jump conditions \eqref{eq: elliptic jump} and \eqref{eq: elliptic flux jump}.
%For interface problems, the normal component of flux rather than the gradient, is usually continuous across the interface, as described in \eqref{eq: elliptic flux jump}. %Consequently, IFE functions are constructed by locally modifying finite element functions on interface elements to preserve such continuity of flux \cite{2008HeLinLin, %2003LiLinWu}.
For elliptic interface problems, most IFE methods in the literature are modified from the Lagrange type conforming finite element spaces, whose degrees of freedom are determined by nodal values at the mesh points. However, IFE spaces originated from these conforming FE spaces are usually nonconforming because IFE functions are discontinuous across interface edges. This discontinuity can be harmfully large for certain configuration of interface location and diffusion coefficient. Consequently, the IFE solution is often less accurate around the interface than the rest of solution domain. Our recent study in \cite{2015LinLinZhang, 2013ZhangTHESIS} indicates that the convergence rates of these conforming type IFE functions used in the Galerkin formulation can deteriorate as the mesh size gets small.

New \textit{partially penalized immersed finite element} (PPIFE) methods are recently introduced in \cite{2015LinLinZhang, 2013ZhangTHESIS} to cope with the negative impacts caused by the discontinuity in IFE functions. In the new PPIFE scheme, the flux jump terms and penalty terms are added on the interface edges to ensure the consistency and the stability of the scheme. These PPIFE methods significantly improve the numerical approximation as the errors around the interface are reduced dramatically, and overall convergence rates are very stable in both energy and the $L^2$- norms. Moreover, it is theoretically proved that the PPIFE methods convergence optimally in energy norm provided that the exact solution is piecewise smooth \cite{2015LinLinZhang}.

In this article, we develop a new IFE method that uses an alternative approach to effectively alleviate the harmful impacts of discontinuity in IFE functions. In this new framework, the continuity of each IFE function across element boundary is weakly enforced; hence there is \textit{no need to add penalty terms in the new scheme}. Specifically, the new IFE space is based on nonconforming finite element spaces \cite{1998ChenOswald, 1973CrouzeixRaviart,1999DouglasSantosSheenYe, 1996KloucekLiLuskin, 1992RanacherTurek} rather than conforming Lagrange type finite element spaces. One significant difference between conforming and
nonconforming finite elements is the way to impose the continuity of
finite element functions across elements. Conforming FE functions
enforce continuity through nodal values at mesh points, while the continuity of
nonconforming FE functions is imposed weakly through average values over edges. For IFE methods, an interface edge is cut by the interface into two pieces. The restriction of an IFE function to an
interface edge leads to two piecewise polynomials from the two
adjacent interface elements sharing this edge. In the conforming finite element 
framework, these two piecewise polynomials coincide at the two
endpoints of the interface edge, and this guarantees the continuity of
the IFE function at the end nodes of the interface edge but not
the whole interface edge. On the other hand, in the nonconforming framework, the continuity across an element edge is weakly enforced over the whole edge in the integral sense, no matter whether it is a polynomial or a piecewise polynomial. Thus we can take advantage of this nonconforming mechanism for constructing IFE functions that weakly preserve continuity over the whole of each interface edge.  % across interface edges in a locally weak sense.

The simplest nonconforming finite element defined on triangular meshes is the well-known Crouzeix-Raviart element \cite{1973CrouzeixRaviart}. For rectangular meshes, the simplest nonconforming finite elements are known as the \textit{rotated-$Q_1$ finite elements} \cite{1998ChenOswald, 1999DouglasSantosSheenYe, 1996KloucekLiLuskin, 1992RanacherTurek}. Their degrees of freedom are determined either by values at midpoints of edges or by the integral values over edges. 
These two types of degrees of freedom define two different finite element spaces if the basis functions are taken as in \cite{1992RanacherTurek}, but an identical space if the basis functions are taken as in \cite{1998ChenOswald,2013JeonNamSheenShim}.
The Crouzeix-Raviart type IFE method was discussed in \cite{2010KwakWeeChang}. In this article, we develop a new IFE space based on the rotated-$Q_1$ functions with the integral-value degrees of freedom. We will derive quasi-optimal error estimates in both energy and $L^2$- norms for the simple Galerkin approximation. In our error analysis, we extend a special projection operator introduced in 
\cite{1999DouglasSantosSheenYe} to bound the flux error on edges. We show that the flux error on interface edges will have a $\log|h|$ factor. The techniques in our error analysis are new to interface problems and they are different from analysis in literature such as \cite{2010KwakWeeChang}.

The rest of this article is organized as follows. In Section 2, we present nonconforming rotated-$Q_1$ IFE space and present some basic properties. In Section 3, we discuss the approximation capabilities of the IFE space. In Section 4, we analyze errors of Galerkin solutions to the elliptic interface problem in energy and $L^2$ norms. In Section 5, numerical results are presented to confirm our analysis and to demonstrate features of the new IFE method. Finally, a few brief conclusions are provided in Section 6.

%The integral-value degrees of freedom impose the continuity over the whole interface edge in a ``global" sense compared with the point-wise continuity ``locally" imposed at nodes for conforming type IFE functions. In other words, the discontinuity can be less prominent across an interface edge because the discontinuity is scattered throughout the edge which leads to a negligible impact.

\section{Nonconforming Immersed Finite Element Space}
This section starts with notations and some preliminaries to be used in this paper. Then, we will introduce the IFE space based on nonconforming rotated-$Q_1$ elements.

\subsection{Notations and Preliminaries}
Multi-index notations will be employed such that $\alpha =
  (\alpha_1,\alpha_2)\in \left[\mathbb{Z}^+\right]^2, |\alpha| =
\alpha_1 + \alpha_2,$ together with the partial differential operator
$\p^\alpha = \frac{\p^{\alpha_1}}{\p x_1^{\alpha_1}}\frac{\p^{\alpha_2}}{\p
  x_2^{\alpha_2}}$, where $\mathbb{Z}^+$ denotes the set of all nonnegative integers.
By $\tS$ we denote the union
  of finite number of mutually disjoint open sets
  $S_j\subset \mathbb{R}^2,j=1,\cdots,J$, and by $S$ the interior of
  $\overline{\tS}$, which contains $\tS$ and all its possible interfaces.
If $J=1$, $\tS = S.$
Let $W^{m,p}(\tS)$ denote the usual Sobolev space with non-negative integer
index $m$, equipped the norm and seminorm:
\begin{equation*}
  \|v\|_{m,p,\tS} =\Big(\underset{|\alpha|\leq m}{\sum} \int_\tS\left|\p^\alpha v(\mbx)\right|^p \dbx \Big)^{1/p},~~
  |v|_{m,p,\tS}=\Big(\underset{|\alpha|= m}{\sum} \int_\tS\left|\p^\alpha v(\mbx)\right|^p \dbx \Big)^{1/p},
\end{equation*}
for $1\leq p<\infty$, and
\begin{equation*}
  \|v\|_{m,\infty,\tS} =\underset{|\alpha|\leq m}{\max} \text{ess.sup}\{|v(x)|:~x\in \tS\},~~
  |v|_{m,\infty,\tS}=\underset{|\alpha|= m}{\max} \text{ess.sup}\{|v(x)|:~x\in \tS\}.
\end{equation*}
%\begin{equation*}
%    \|v\|_{m,p,\tS} =
%    \begin{cases}\left[\underset{|\alpha|\leq m}{\sum} \int_\tS\left|\p^\alpha v(x)\right|^p \dx \right]^{1/p},& 1\leq p<\infty,\\
%\underset{|\alpha|\leq m}{\max} \text{ess.sup}\{|v(x)|:~x\in \tS\}, &p = \infty;
%    \end{cases}
%\end{equation*}
%\begin{equation*}
%    |v|_{m,p,\tS} = \begin{cases}\left[\underset{|\alpha| = m}{\sum} \int_\tS\left|\p^\alpha v(x)\right|^p \dx
%    \right]^{1/p}, & 1\le p<\infty,\\
%\underset{|\alpha| =  m}\max \text{ess.sup}\{|v(x)|:~x\in \tS\},& p = \infty.
%    \end{cases}
%\end{equation*}
In particular, for $p = 2$, we denote $H^m(\tS) = W^{m,p}(\tS)$, and we omit the index $p$
in associated norms and seminorms for simplicity, \emph{i.e.},
$\|v\|_{m,2,\tS} =\|v\|_{m,\tS}$, and $|v|_{m,2,\tS} = |v|_{m,\tS}$.
We will also follow the convention to drop the domain index $\tS$ if $\tS=\O.$
For $p=2$, associated with the norm $\|\cdot\|_{m,\tS}$, the inner product
for $H^m(\tS)$ will be denoted by $(\cdot,\cdot)_{H^m(\tS)}$, with further
simplification to $(\cdot,\cdot)_{\tS}$ and $(\cdot,\cdot)$ if $m=0$ and also if
$\tS=\O$, respectively.
%If $\tS$ is decomposed of two open sets $S^+$ and $S^-$ separated by a piecewise
%$C^2$--continuous curve $\G$ such that $\G =  \overline{S^+}\cap\overline{S^-},$
%we assume that $\beta$ is defined as in \eqref{eq: beta discontinuity} with
%$\O$ replaced by $\tS.$
%Denote by $S$ the connected open set $S^+\cup\G\cup S^-.$

For $m\ge 1$, we define two types of subspaces of $H^m(\tS)$ whose functions satisfy the
interface jump conditions \eqref{eq: elliptic jump}
and \eqref{eq: elliptic flux jump}
on $\Gamma$. First, we set
\[
\tHi^m(S) = H^1(S) \cap H^m(\tS),
\]
endowed with the inner-product and the norm
\[
\<u, v\>_{\tHi^m(S)} = (u,v)_{H^1(S)} + \sum_{j=1}^J \sum_{|\bal|=2}^m(\p^\bal u, \p^\bal
v)_{L^2(S^j)},\quad
\|u\|_{\tHi^m(S)} = \sqrt{\<u, u\>_{\tHi^m(S)}}.
\]
Notice that $\tHi^1(S) = H^1(S)$ and that
\[
\jump{v}{\G} = 0
\text{ in the sense of } H^{\frac{1}{2}}(\G)~~~\forall v\in \tHi^m(S),~m\ge 1.
%;\quad
%\vertiii{v} = \|v\|_{1,\tS}~\forall v\in \PHi^1(\tS).
\]

Finally, for $m= 2$, we define a subspace of $\tHi^m(S)$, which will
be suitable for the analysis of interface problem, as follows:
\begin{eqnarray*}
  \PHb^2(S) = \left\{v\in \tHi^2(S):~\djump{\bn_{\G} \cdot \beta \nabla v}{\G} = 0\right\}.% \text{ in the sense of } H^{-\frac{1}{2}}(\G)  \right\}.
\end{eqnarray*}
In addition, the following spaces will be useful: for $p\ge 2$,
\begin{equation*}
\widetilde W^{2,p}_\G(S)=W^{1,p}(S) \cap W^{2,p}(\tS);\quad
{\tW}^{2,p}_\beta(S) = \{v\in \widetilde W^{2,p}_\beta(\Omega) \mid~
  \djump{\bn_{\G} \cdot \beta \nabla v}{\G} = 0\}.
\end{equation*}
Here, and in what follows, $\jump{v}{\G}$ and $\djump{v}{\G}$ will mean
the jumps across $\G$ in the sense of $W^{1-\frac1{p},p}(\G)$ and $W^{-\frac1{p},p}(\G)$,
respectively. However, if $u\in{W}^{2,p}(\tS)$, these jumps are
defined in the sense of
$W^{2-\frac1{p},p}(\G)$ and $W^{1-\frac1{p},p}(\G)$, respectively.

%Similarly, we also introduce the subspaces of $C^m(\tS)$: for $m\ge 2,$
%\begin{eqnarray*}
%\PCi^m(\tS) = C^0(S) \cap C^m(\tS);\quad
%  \PCb^m(\tS) = \left\{u\in \PCi^m(\tS)\mid~ \djump{\beta \nabla u\cdot {\bnu}_{\G}}{\G} = 0\right\}.
%\end{eqnarray*}
%Clearly, we have $\PCb^2(T)\subset \PHb^2(T)$, for every subset $T\subset \tS$. Finally, set
%\[
%\PHz^1(\tS) = \{v\in \PHi^1(\tS)\mid~ v\at{\p S} = 0 \text{ in the
%  sense of } H^{\frac12}(\p S)\}.
%\]

Assume that $f\in H^{-1}(\O),$ where $H^{-1}(\O)$ is the dual space of
$H_0^1(\O)=\PHz^1(\tO).$ For the interface problem described by \eqref{eq:BVP}
and \eqref{eq:jump_conditions}, we consider its weak form:
%Consider the weak formulation of \eqref{eq: elliptic BC}
%and \eqref{eq: elliptic flux jump}
find $u\in H^1(\O)$ such that $u = g$ on $\partial \Omega$ and
\begin{equation}\label{eq:weakform}
    a(u,v) = L(v)~~~\forall~v\in H_0^1(\O),
\end{equation}
where
\begin{eqnarray*}
  a(u,v) = (\beta\grad u, \grad v), \quad
  L(v) = \<f, v\>_{H^{-1}(\O),H_0^1(\O)}, %\quad\forall u\in H^1(\O), v \in H_0^1(\O),
\end{eqnarray*}
$\<\cdot, \cdot\>_{V',V}$ being the duality pairing between the topological
vector space $V$ and its dual space $V'$.
An application of the Lax-Milgram Lemma shows that there exists a unique
solution $u\in H^1(\O)$ for \eqref{eq:weakform} such that
\[
\| u \|_1 \le C \| f\|_{-1},
\]
where $C$ is a positive constant depending only on $\O$ and $\beta.$

\subsection{Nonconforming FE functions}
Let $\O$ be a rectangular domain or a union of rectangular domains.
Without loss of generality, assume that $\{\mathcal{T}_h\}$ is a family of uniform Cartesian meshes for domain $\O$ with 
mesh parameter $h>0$. For each element $T\in\mathcal{T}_h$, we call it
an interface element if the interior of $T$ intersects with the interface
$\G$; otherwise, we call it a non-interface element. Without loss of generality, we assume that interface elements in $\mathcal{T}_h$ satisfy the following hypotheses when the mesh size $h$ is small enough:
\begin{description}
  \item \textbf{(H1)} The interface $\G$ cannot intersect an edge of any rectangular element at more than two points unless the edge is part of $\G$.
  \item \textbf{(H2)} The interface $\G$ can only intersect the boundary of an interface element at two points, and these intersection points must be on different edges of this element. 
\end{description}

Denote by $\mathcal{T}_h^i$ and $\mathcal{T}_h^n =
  \mathcal{T}_h\setminus\mathcal{T}_h^i$ the collections of all
  interface elements and non-interface elements, respectively.
%Designate by $\cE$ the collection of all edges in $\mathcal{T}_h$,
%and by $\cE^i$ and $\cE^n$ the collections of interface edges and non-interface edges, respectively.
%In the following discussion, we will use $\cS_h^P$ to denote the nonconforming FE/IFE space of the first type, to emphasize the local degrees of freedom are determined by {\bf point} evaluations, and use $\cS_h^I$ to denote the nonconforming FE/IFE space of the second type, to emphasize the local degrees of freedom are determined by {\bf integration} over the corresponding edges.
%\subsection{Nonconforming FE Space $\cS_h^P(\O)$}
For a typical rectangular element $T = \square A_1A_2A_3A_4 \in \mathcal{T}_h$, the following conventions for its vertices and edges are assumed:
\begin{equation}\label{eq: rectangle vertices}
  A_1 = (x_0,y_0),~~A_2 = (x_0+h_x,y_0),~~ A_3 = (x_0+h_x,y_0+h_y),~~A_4 (x_0,y_0+h_y),
\end{equation}
and
\begin{equation}\label{eq: rectangle edges}
    \cbe_1 = \overline{A_1A_2},~~\cbe_2 = \overline{A_2A_3},~~\cbe_3 = \overline{A_3A_4},~~\cbe_4 = \overline{A_4A_1}.
\end{equation}
%For $j=1,2,3,4$, denote by $M_j$ the midpoint of the edge $\cbe_j$.
We follow the classical triplet definition of a finite element \cite{1978Ciarlet}. On the element $T$, the local FE space is defined by
%to distinguish the following two finite elements introduced in \cite{1992RanacherTurek}.
%The first finite element is defined by $(T, \Pi_T, \Sigma^P_T)$, where
%\begin{equation}\label{eq: pointwise DoF}
%    \Pi_T = \text{Span}\{1,x,y,x^2-y^2\},~~~\Sigma_T^P = \{\psi(M_j),j=1,2,3,4\mid ~ \psi\in \Pi_T\}.
%\end{equation}
\begin{equation}\label{eq: pointwise DoF}
    \Pi_T = \text{Span}\left\{1,\frac{x-x_0}{h_x},\frac{y-y_0}{h_y},\left(\frac{x-x_0}{h_x}\right)^2-\left(\frac{y-y_0}{h_y}\right)^2\right\}.
\end{equation}
%Here the superscript $P$ emphasizes that the degrees of freedom are
%determined by the point values at the edge midpoints.
%It is easy to check that this Lagrange type finite element $(T, \Pi_T, \Sigma^P_T)$ is unisolvent, \emph{i.e.}, for any given $v_j\in\mathbb{R}$, $j=1,2,3,4$, then there exists a unique $\psi^P_T\in \Pi_T$ such that
%\begin{equation}\label{eq: midpoint unisolvent}
%    \psi^P_T(M_j) = v_j,~~~\forall~ j = 1,2,3,4.
%\end{equation}
%Denote by $\psi_{j,T}^P$, $j=1,2,3,4,$ the local basis functions such that
%\begin{equation}\label{eq: midpoint constraints}
%    \psi^P_{j,T}(M_k) = \delta_{jk},~~~\forall~ j,k = 1,2,3,4.
%\end{equation}
%In particular,
%\begin{eqnarray*}
%% \nonumber to remove numbering (before each equation)
%  \psi^P_{1,T} &=& \frac{1}{4h^2}(3 h^2 + 4 h x - 8 h y - 4 (x^2 - y^2)), \nonumber\\
%  \psi^P_{2,T} &=& \frac{1}{4h^2}(-h^2 + 4 h y + 4 (x^2 - y^2)), \nonumber\\
%  \psi^P_{3,T} &=& \frac{1}{4h^2}(-h^2 + 4 h x - 4 (x^2 - y^2)), \nonumber\\
%  \psi^P_{4,T} &=& \frac{1}{4h^2}(3 h^2 - 8 h x + 4 h y + 4 (x^2 - y^2))\nonumber.
%\end{eqnarray*}
%The second finite element is defined by $(T, \Pi_T, \Sigma_T)$, where
The degrees of freedom are defined as the average values over edges: 
\begin{equation}\label{eq: integral DoF}
    \Sigma_T = \left\{\frac{1}{|\cbe_j|}\int_{\cbe_j}\psi_T\,\mbox{d}s, j=1,2,3,4 :\forall ~\psi\in \Pi_T\right\},
\end{equation}
where $|\cbe_j|$ denotes the length of the edge $\cbe_j$.
%The superscript $I$ is adopted in order to emphasize that the degrees of freedom are given by the integral values over edges.
%Again, it is not difficult to verify that $(T, \Pi_T, \Sigma^I_T)$ is
%unisolvent. We choose
The local basis functions $\psi_{j,T}$, $j=1,2,3,4$, fulfill
\begin{equation}\label{eq: integral constraints}
    \frac{1}{|\cbe_k|}\int_{\cbe_k}\psi_{j,T}\,\mbox{d}s = \delta_{jk},~~~\forall~ j,k = 1,2,3,4.
\end{equation}
%In particular,
%\begin{eqnarray}
%% \nonumber to remove numbering (before each equation)
%  \psi_{1,T}^I &=& \frac{1}{4h^2}(3 h^2 + 6 h x - 10 h y - 6 (x^2 - y^2)), \nonumber\\
%  \psi_{2,T}^I &=& \frac{1}{4h^2}(-h^2 - 2 h x + 6 h y + 6 (x^2 - y^2)), \nonumber\\
%  \psi_{3,T}^I &=& \frac{1}{4h^2}(-h^2 + 6 h x - 2 h y - 6 (x^2 - y^2)),\nonumber\\
%  \psi_{4,T}^I &=& \frac{1}{4h^2}(3 h^2 - 10 h x + 6 h y + 6 (x^2 - y^2)).\nonumber
%\end{eqnarray}
%We then denote these local nonconforming rotated-$Q_1$ finite element spaces
%to be
Set the local finite element space on an element $T$ as follows
\begin{equation}\label{eq: noninterface local space P I}
% \nonumber to remove numbering (before each equation)
  S_h^{n}(T) = \Span\{\psi_{j,T}:j = 1,2,3,4\}.
\end{equation}
It is obvious that on every element $T \in \mathcal{T}_h$, $S_h^{n}(T) = \Pi_T$.
%From now on, we designate by $S_h^n(T)$ the local nonconforming rotated-$Q_1$ finite element space $\Pi_T$ on an element $T$.

\subsection{Nonconforming IFE Functions}
\label{sec: nonconforming IFE functions}

Next, we describe the construction of a local IFE function $\phi_T$ on a typical interface element $T\in\mathcal{T}_h^i$ whose vertices and edges are given in \eqref{eq: rectangle vertices} - \eqref{eq: rectangle edges}. 
%The construction of a local IFE function $\phi_T^P$ with midpoint-value degrees of freedom follows a similar approach.

Assume that an interface curve $\G$ intersects $T \in \mathcal{T}_h^i$ at two different points
$D$ and $E$, and the line segment $\overline{DE}$ separates $T$ into two subelements
$T^+$ and $T^-$.
Depending on the adjacency of the edges containing $D$ and $E$,
the interface elements will be classified as Type I and Type II interface elements
such that these two edges are located at two adjacent edges and at two opposite edges, respectively.
We use a Type II interface element to exemplify the construction of the local IFE functions and corresponding spaces, i.e., we assume the interface points are such that
\begin{equation*}
    D = (x_0+dh_x,y_0),~~ E = (x_0+eh_x,y_0+h_y),
\end{equation*}
where $d,e \in(0,1)$. The local IFE function $\phi_{T}$ is defined as a piecewise rotated $Q_1$ polynomial as follows:
\begin{equation}\label{eq: nonconforming IFE basis T2}
    \phi_{T}(x,y) =
    \left\{
      \begin{array}{ll} c_1^+ + c_2^+ \left(\dfrac{x-x_0}{h_x}\right) + c_3^+ \left(\dfrac{y-y_0}{h_y}\right) + c_4^+\left(\left(\dfrac{x-x_0}{h_x}\right)^2-\left(\dfrac{y-y_0}{h_y}\right)^2\right) & \text{in}~ T^+, \vspace{1mm} \\
       c_1^- + c_2^- \left(\dfrac{x-x_0}{h_x}\right) + c_3^- \left(\dfrac{y-y_0}{h_y}\right) + c_4^-\left(\left(\dfrac{x-x_0}{h_x}\right)^2-\left(\dfrac{y-y_0}{h_y}\right)^2\right) & \text{in}~ T^-. \\
      \end{array}
    \right.
\end{equation}
The coefficients $c_j^\pm$ are determined by the average value $v_j$ on each edge $\cbe_j$:
\begin{equation}\label{eq: nonconforming IFE basis partial nodal}
    \frac{1}{|\cbe_j|}\int_{\cbe_j} \phi_T\,\mbox{d}s = v_j,~~ j=1,2,3,4,
\end{equation}
and the following interface jump conditions 
\begin{equation}\label{eq: nonconforming IFE basis DE continuity}
\jump{{\phi_T}}{\overline{DE}} =0,
\end{equation}
and 
\begin{equation}\label{eq: nonconforming IFE basis flux continuity}
    \int_{\overline{DE}}\djump{{\bnu}_{\overline{DE}}\cdot  \beta\nabla\phi_{T}}{\overline{DE}} \mbox{d}s= 0,
\end{equation}
where ${\bnu}_{\overline{DE}}$ is the unit normal on $\overline{DE}$. Note that the continuity condition \eqref{eq: nonconforming IFE basis DE continuity} is equivalent to
\begin{equation}
    [\phi_{T}(D)] = 0,~~~~[\phi_{T}(E)] = 0,~~~~  c_4^+ = c_4^-.
\end{equation}

Equations \eqref{eq: nonconforming IFE basis partial nodal}--\eqref{eq: nonconforming IFE basis flux continuity} provide eight constraints and lead to an $8\times 8$ algebraic system $M_c{\bf C}={\bf V}$ about the coefficients ${\bf C} = (c_1^-,\cdots,c_4^-,c_1^+,\cdots,c_4^+)^t$ with ${\bf V} = (v_1,\cdots,v_4, 0,\cdots,0)^t$.
%, $M_c = (M_c^1, M_c^2)$, and
%\begin{equation*}
% M_c^1= \left(
%      \begin{array}{cccccc}
% d (3 + 3 d - 2 d^2) &
% d (-1 - d + 2 d^2)&
%  -d (1 - 3 d + 2 d^2) \\
%  -e (1 - 3 e + 2 e^2)& e (-1 - e + 2 e^2)& e (3 + 3 e - 2 e^2) \\
%3 (-1 - 2 d + 2 d^2)& 1 + 2 d - 6 d^2& 1 - 6 d + 6 d^2\\
%1 - 6 e + 6 e^2& 1 + 2 e - 6 e^2&   3 (-1 - 2 e + 2 e^2)\\
%  12/h^2& -(12/h^2)& 12/h^2\\
%  2 \beta^- (3 - 5 d - e) h&
%  2 \beta^- (-1 + 3 d + 3 e) h&
%  2 \beta^- (3 - d - 5 e) h
%      \end{array}
%    \right),
%\end{equation*}
%\begin{equation*}
%  M_c^2= \left(
%      \begin{array}{cccccc}
%  4 - 3 d - 3 d^2 + 2 d^3&
%   d (1 - 3 d + 2 d^2)&
%  -d (3 - 5 d + 2 d^2) \\
%  e (1 - 3 e + 2 e^2)& 4 - 3 e - 3 e^2 + 2 e^3& -e (3 - 5 e + 2 e^2) \\
%  3 + 6 d - 6 d^2& -1 + 6 d - 6 d^2& 3 - 10 d + 6 d^2 \\
% -1 + 6 e - 6 e^2& 3 + 6 e - 6 e^2&   3 - 10 e + 6 e^2\\
%  -(12/h^2)& -(12/h^2)& 12/h^2\\
%  2 \beta^+ (-3 + 5 d + e) h&
%  2 \beta^+ (-3 + d + 5 e) h&
%  2 \beta^+ (5 - 3 d - 3 e) h
%      \end{array}
%    \right),
%\end{equation*}
%\begin{equation*}\label{}
%    M_v = \left(
%      \begin{array}{cccc}
% 4& d (-1 - d + 2 d^2)& 0& -d (3 - 5 d + 2 d^2)\\
% 0&  e (-1 - e + 2 e^2)&  4 & -e (3 - 5 e + 2 e^2)\\
% 0&   1 + 2 d - 6 d^2&  0 &  3 - 10 d + 6 d^2\\
% 0&   1 + 2 e - 6 e^2&  0&   3 - 10 e + 6 e^2\\
% 0&  -(12/h^2)&  0&  12/h^2\\
% 0& 2 \beta^+ (-1 + 3 d + 3 e) h& 0&  -2 \beta^- (-5 + 3 d + 3 e) h
%      \end{array}
%    \right).
%\end{equation*}
By direct calculations, one can verify that the matrix $M_c$ is nonsingular for all $\beta^\pm>0$ and $0<d,e<1$; see \cite{2013ZhangTHESIS} for more details.
%\begin{eqnarray}
%    \det(M_c) &=& \frac{3}{2h}\bigg( \beta^- (4 - 5 d + 10 d^2 - 5 d^3 - 5 e - 2 d e - d^2 e + 10 e^2 - d e^2 -  5 e^3)\nonumber\\
% & &+ \beta^+ (5 d - 6 d^2 + 5 d^3 + 5 e - 6 d e + d^2 e - 6 e^2 + d e^2 + 5 e^3)\bigg)> 0.\label{eq: Mc determinant integral}
%\end{eqnarray}
Hence, an IFE function $\phi_{T}$ satisfying jump conditions \eqref{eq: nonconforming IFE basis DE continuity} and
\eqref{eq: nonconforming IFE basis flux continuity} is uniquely
determined by its integral values $v_j$ over edges $\cbe_j$,
$j=1,2,3,4$. For each $j=1,2,3,4$, let $\mathbf{V}=\mathbf{V}_j =
(v_1,\cdots,v_4,0,\cdots,0)^t\in \mathbb{R}^8$ be the $j$-th
canonical vector such that $v_j = 1$ and $v_k = 0$ for $k\neq j$.
We can solve for $\mathbf{C}_j =
(c_1^-,\cdots,c_4^-,c_1^+,\cdots,c_4^+)^t$ and use it in \eqref{eq:
  nonconforming IFE basis T2} to form the $j$-th nonconforming
rotated-$Q_1$ local IFE basis function $\phi_{j,T}$.
Figure \ref{fig: nonconforming FE IFE local basis integral} presents illustrations for
a comparison of a standard rotated $Q_1$ finite element basis function and its corresponding rotated $Q_1$ IFE basis functions in
both Type I and Type II interface elements.

\begin{figure}[!htb]
  % Requires \usepackage{graphicx}
  \centering
  \includegraphics[width=0.3\textwidth]{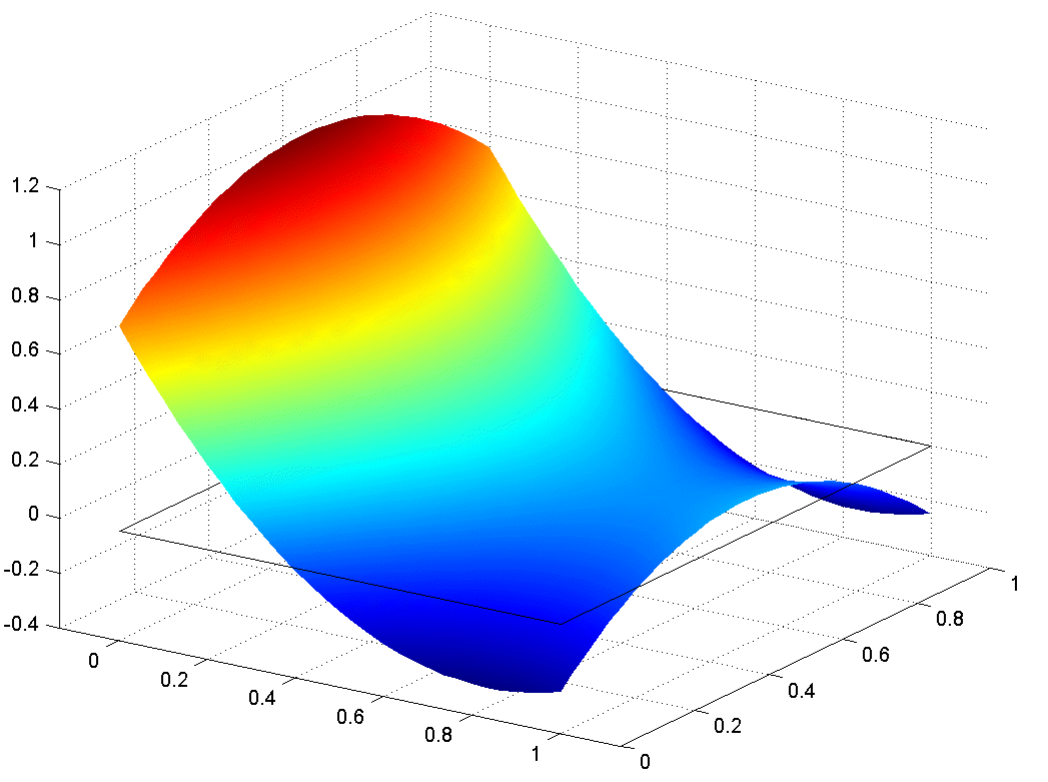}
  \includegraphics[width=0.3\textwidth]{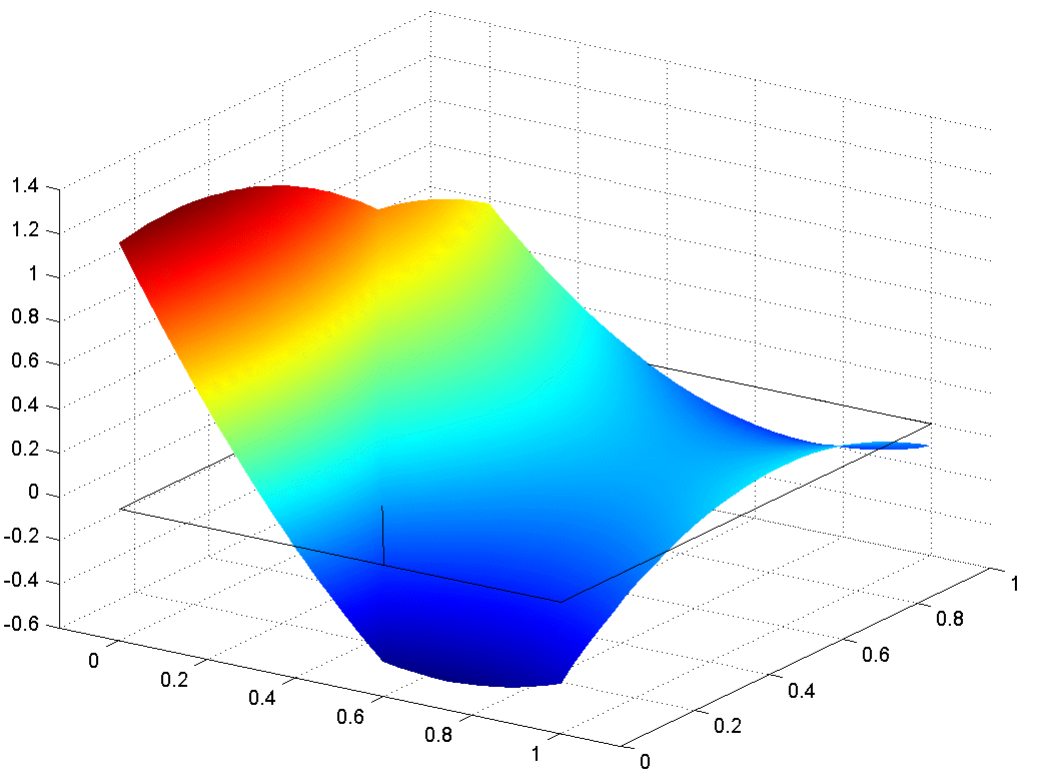}
  \includegraphics[width=0.3\textwidth]{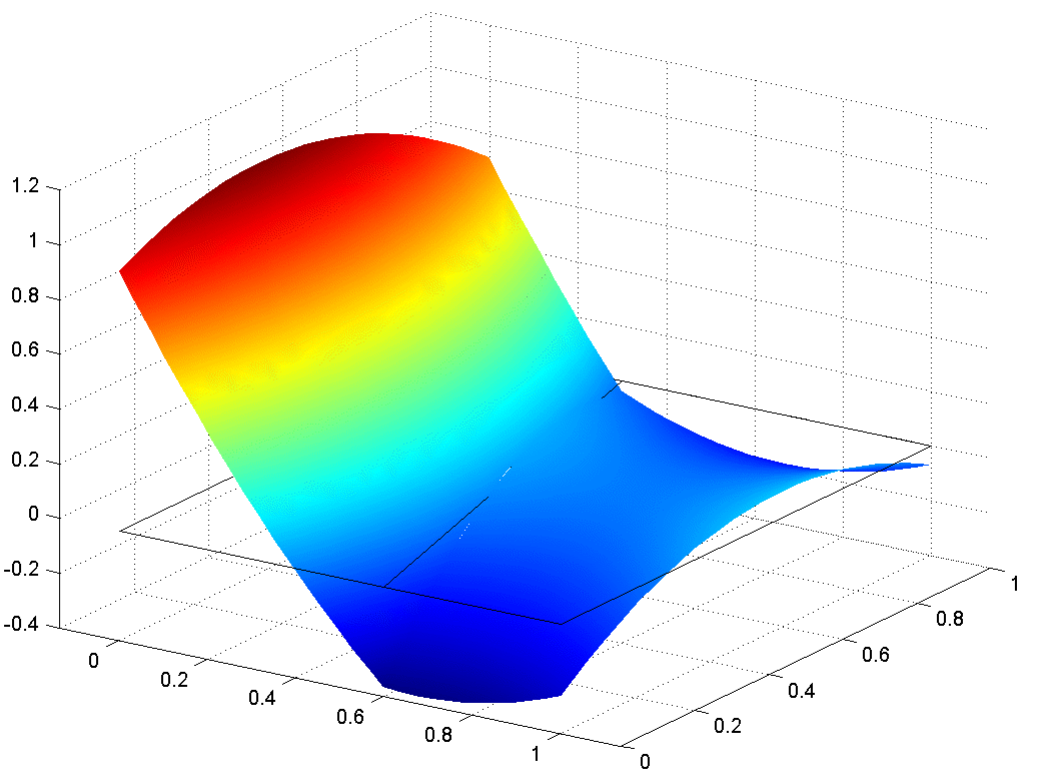}
  \caption{Nonconforming FE/IFE local bases with integral-value degrees of freedom}
  \label{fig: nonconforming FE IFE local basis integral}
\end{figure}

%The local IFE basis functions $\phi_{j,T}^P$,
%$j=1,2,3,4$ with midpoint-value degrees of freedom are constructed in
%  the same procedure as those for $\phi_{j,T}^I, j =1,2,3,4.$
%The only modification is to replace the average value constraint \eqref{eq: nonconforming IFE basis partial nodal} by the midpoint value constraint:
%\begin{equation}\label{eq: nonconforming IFE basis partial nodal midpoint}
%    \phi_T^P(M_j) = v_j,~~ j=1,2,3,4,
%\end{equation}
%where $v_j$ are midpoint values at each edge $\gamma_j$, $j=1,2,3,4$.
%Choosing appropriate values of $v_j$, we can obtain the nodal basis functions $\phi_{j,T}^P$, $j=1,2,3,4$. An illustration of rotated-$Q_1$ FE/IFE local basis functions with midpoint-value degrees of freedom is provided in Figure \ref{fig: nonconforming FE IFE local basis midpoint}.

%\begin{figure}[!htb]
%  % Requires \usepackage{graphicx}
%  \centering
%\includegraphics[width=0.3\textwidth]{}
%\includegraphics[width=0.3\textwidth]{}
%\includegraphics[width=0.3\textwidth]{}
%  \caption{Nonconforming FE/IFE local basis functions with midpoint-value degrees of freedom}
%  \label{fig: nonconforming FE IFE local basis midpoint}
%\end{figure}

Denote by $ S_h^{i}(T) = \Span\{\phi_{j,T}:j = 1,2,3,4\}$ the local rotated-$Q_1$ IFE space on an interface element $T$ . 
The global IFE space is defined as follows
\begin{eqnarray}
% \nonumber to remove numbering (before each equation)
  S_h(\Omega) &=& \Big\{v\in L^2(\O):~ v|_T\in S_h^n(T)\text{ if }T\in \mathcal{T}_h^n, v|_T\in S_h^{i}(T)\text{ if }T\in \mathcal{T}_h^i; \nonumber\\
   & &\qquad\qquad\qquad
  \int_\gamma [v]_\gamma ds = 0 \text{ for all interior edges } \gamma \text{ of } \mathcal{T}_h\Big\}.\label{eq: global IFE space integral}
\end{eqnarray}
%Notice that these two global IFE spaces are such that $\cS_h^{P}(\O) \neq \cS_h^{I}(\O)$ because functions in these IFE spaces have different continuity across element edges.

%Depending on $T \in \mathcal{T}_h^n$ or $T \in \mathcal{T}_h^i$, we distinguish the spaces $\cS_h^n(T)$ and $\cS_h^i(T)$ which represent $\cS_h^{\iota,n}(T)$ and $\cS_h^{\iota,i}(T)$ for $\iota=P,I.$

\subsection{Basic Properties of IFE Space}
We summarize some basic and useful properties for the IFE space $S_h(\O)$ in this subsection. The results can be verified via straightforward calculation, and we also refer to \cite{2013ZhangTHESIS} for some of their proofs. 

\begin{lemma}\label{lemma: unisolvent elliptic}
{\bf (Unisolvency)} On each interface element $T \in \mathcal{T}_h^i$, an IFE function $\phi_T\in S_h^i(T)$  is uniquely determined by its integral values \eqref{eq: nonconforming IFE basis partial nodal} and interface jump conditions \eqref{eq: nonconforming IFE basis DE continuity} and \eqref{eq: nonconforming IFE basis flux continuity}.
\end{lemma}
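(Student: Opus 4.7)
Since the relation between the eight coefficients $\mathbf{C}$ and the prescribed data $\mathbf{V}$ is governed by a square $8 \times 8$ matrix $M_c$, the lemma reduces to verifying that the homogeneous problem (all prescribed DoFs vanishing) admits only the trivial solution. The plan is a two-stage reduction: first compress the eight-parameter ansatz down to five unknowns by exploiting the interface jump conditions, and then check nonsingularity of the resulting $5 \times 5$ system by an explicit algebraic computation.

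For the first stage, the condition $c_4^+ = c_4^-$ cancels the quadratic part of $\phi_T^+ - \phi_T^-$, leaving a linear polynomial. The pointwise continuity $[\phi_T(D)] = [\phi_T(E)] = 0$ then forces this linear polynomial to vanish at two distinct points on the line through $D$ and $E$, hence identically along that entire line. Consequently $(c_1^+ - c_1^-,\, c_2^+ - c_2^-,\, c_3^+ - c_3^-)$ must be a single scalar multiple $\alpha$ of the coefficients of the line equation for $\overline{DE}$. At this point the IFE function is parameterized by the five unknowns $c_1^-, c_2^-, c_3^-, c_4, \alpha$.

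The second stage rewrites the remaining five linear constraints (the four vanishing DoFs and the single integral flux-continuity equation on $\overline{DE}$) as a $5 \times 5$ system. For a Type II element with $D = (x_1 + dh, y_1)$ and $E = (x_1 + eh, y_1 + h)$, the needed edge integrals of the monomials $1, x, y, x^2 - y^2$ are elementary, and the interface flux integral along $\overline{DE}$ expands to explicit polynomials in $d$ and $e$. One then expands the $5 \times 5$ determinant directly and checks that the resulting expression in $\beta^\pm, d, e$ does not vanish for $\beta^\pm > 0$ and $0 < d, e < 1$. The Type I geometry (two intersection points on adjacent edges) is handled in the same way. The midpoint-value case $\phi_T^P$ requires no change in stage one, since the jump-condition rows are identical; only the four DoF rows are replaced by point evaluations at the $M_j$, and the same reduction and determinant check carry through.

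The main obstacle is purely computational: the determinant to be evaluated is a polynomial in $\beta^+, \beta^-, d, e$ with genuinely nontrivial structure, and one must handle the Type I and Type II geometries separately. The value of the two-stage reduction is that it isolates the geometry and the jump conditions from the DoF conditions, reducing the verification of invertibility from an $8 \times 8$ to a $5 \times 5$ determinant computation of the kind carried out in \cite{2013ZhangTHESIS}.
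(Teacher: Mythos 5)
Your proposal is correct and follows essentially the same route as the paper, which likewise reduces unisolvency to the nonsingularity of the linear system determining the coefficients: the paper simply asserts that the $8\times 8$ system $M_c\mathbf{C}=\mathbf{V}$ is nonsingular for all $\beta^\pm>0$ and $0<d,e<1$ ``by direct calculations'' and defers the details to \cite{2013ZhangTHESIS}. Your preliminary elimination via the jump conditions, collapsing the eight unknowns to five and the check to a $5\times 5$ determinant, is a sensible organizational refinement of that same determinant verification, and, like the paper, you leave the final algebraic computation to be carried out explicitly.
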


\begin{lemma}\label{lemma: nonconforming IFE midpoint basis continuity}
{\bf (Continuity)} On each interface element $T \in \mathcal{T}_h^i$, the local IFE space $S_h^i(T)\subset C^0(T)$.
\end{lemma}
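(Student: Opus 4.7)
The plan is to show that any $\phi_T \in \cS_h^i(T)$, which is piecewise polynomial on $T^+$ and $T^-$, suffers no discontinuity across the common boundary $\overline{DE}$. Since each piece $\phi_T^\pm = c_1^\pm + c_2^\pm x + c_3^\pm y + c_4^\pm(x^2-y^2)$ is smooth in its own subelement, the only possible source of discontinuity inside $T$ is the interface segment $\overline{DE}$. Thus the problem reduces to verifying $\phi_T^+ \equiv \phi_T^-$ on $\overline{DE}$.

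I would set $w(x,y) = \phi_T^+(x,y) - \phi_T^-(x,y)$ and examine it on the line containing $\overline{DE}$. The third constraint in \eqref{eq: nonconforming IFE basis DE continuity}, namely $c_4^+ = c_4^-$, causes the quadratic term $(x^2 - y^2)$ to drop out entirely, so that
\begin{equation*}
w(x,y) = (c_1^+ - c_1^-) + (c_2^+ - c_2^-) x + (c_3^+ - c_3^-) y,
\end{equation*}
which is an affine function of $(x,y)$. Its restriction to the line through $D$ and $E$ is therefore affine in the arclength parameter along that line.

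The remaining two constraints in \eqref{eq: nonconforming IFE basis DE continuity}, $[\phi_T(D)] = 0$ and $[\phi_T(E)] = 0$, say exactly that $w(D) = 0$ and $w(E) = 0$. An affine function of one variable that vanishes at two distinct points vanishes identically; since $D \neq E$ by hypothesis \textbf{(H1)}, we conclude $w \equiv 0$ on the whole segment $\overline{DE}$. Hence $\phi_T^+ = \phi_T^-$ on $\overline{DE}$, which together with the polynomial smoothness of each piece inside its subelement gives $\phi_T \in C(T)$.

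The argument is essentially algebraic and identical for both Type I and Type II interface elements, since in both cases the three conditions \eqref{eq: nonconforming IFE basis DE continuity} are imposed in exactly the same form. No serious obstacle is expected; the only subtle point is recognizing that the artificial-looking constraint $c_4^+ = c_4^-$ is precisely what is needed to kill the nonlinear component of the jump, so that pointwise matching at the two endpoints $D$ and $E$ suffices to force pointwise matching along the entire segment.
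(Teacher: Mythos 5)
Your proof is correct and is the standard argument: the paper itself defers the proof to the thesis \cite{2013ZhangTHESIS}, where continuity is established in exactly this way, by observing that the constraint $c_4^+=c_4^-$ reduces the jump $\phi_T^+-\phi_T^-$ to an affine function whose vanishing at the two distinct points $D$ and $E$ forces it to vanish along all of $\overline{DE}$. Your remark that this works identically for Type I and Type II elements, and that the matching of the $(x^2-y^2)$ coefficients is precisely what makes two-point matching sufficient, is the right way to see why the jump conditions \eqref{eq: nonconforming IFE basis DE continuity} are designed as they are.
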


\begin{lemma}
\label{lemma: partition of unity}
{\bf (Partition of Unity)} On each interface element $T \in \mathcal{T}_h^i$, the IFE basis functions $\phi_{i,T}$ satisfy the partition of unity property, \emph{i.e.},
\begin{equation}\label{eq: partition of unity for Midpoint IFE basis}
    \sum_{j=1}^4\phi_{j,T}(x,y) = 1,~~~~\forall (x,y)\in T.
\end{equation}
\end{lemma}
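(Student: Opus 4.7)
The plan is to invoke the unisolvency result (Lemma \ref{lemma: unisolvent elliptic}) together with the observation that the constant function $1$ is itself a legitimate member of the local IFE space $\cS_h^i(T)$.

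First I would verify that the function $w \equiv 1$ on $T$ belongs to $\cS_h^i(T)$. Indeed, writing $w$ as a piecewise rotated-$Q_1$ polynomial in the form \eqref{eq: nonconforming IFE basis T2} with $c_1^\pm = 1$ and $c_2^\pm = c_3^\pm = c_4^\pm = 0$, one sees that the continuity condition \eqref{eq: nonconforming IFE basis DE continuity} holds trivially at $D$ and $E$ (and the matching $c_4^+ = c_4^-$ is immediate), and the flux jump condition \eqref{eq: nonconforming IFE basis flux continuity} holds because $\nabla w \equiv 0$ on both subelements $T^+$ and $T^-$. Hence $w \in \cS_h^i(T)$.

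Next I would compute the degrees of freedom of $w$. For the integral-value case, $\tfrac{1}{|\cbe_j|}\int_{\cbe_j} w\,\mbox{d}s = 1$ for each $j = 1,2,3,4$. For the midpoint-value case, $w(M_j) = 1$ for each $j$. In either case, all four degrees of freedom equal one. The function $\sum_{j=1}^4 \phi^I_{j,T}$ lies in $\cS_h^i(T)$ (as a finite linear combination of basis functions), and its integral-value degrees of freedom are, by \eqref{eq: integral constraints} applied to the IFE basis,
\begin{equation*}
\frac{1}{|\cbe_k|}\int_{\cbe_k} \sum_{j=1}^4 \phi^I_{j,T}\,\mbox{d}s = \sum_{j=1}^4 \delta_{jk} = 1, \qquad k=1,2,3,4,
\end{equation*}
and similarly for $\sum_{j=1}^4 \phi^P_{j,T}$ using \eqref{eq: midpoint constraints}.

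Finally, Lemma \ref{lemma: unisolvent elliptic} guarantees that any element of $\cS_h^i(T)$ is uniquely determined by these four edge degrees of freedom together with the interface jump conditions \eqref{eq: nonconforming IFE basis DE continuity}--\eqref{eq: nonconforming IFE basis flux continuity}. Since both $w \equiv 1$ and $\sum_{j=1}^4 \phi_{j,T}$ (in either flavor) lie in $\cS_h^i(T)$ and share the same four degree-of-freedom values, they must coincide, which yields \eqref{eq: partition of unity for Midpoint IFE basis}. There is no real obstacle here; the entire argument rides on unisolvency and the trivial membership of the constant in $\cS_h^i(T)$, so the only thing worth being careful about is to confirm that the jump conditions are indeed satisfied by the constant, which is immediate because $\nabla w = 0$.
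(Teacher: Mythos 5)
Your proof is correct and complete: the constant function belongs to $\cS_h^i(T)$ because the jump conditions \eqref{eq: nonconforming IFE basis DE continuity}--\eqref{eq: nonconforming IFE basis flux continuity} are homogeneous and trivially satisfied when the gradient vanishes, its four degrees of freedom all equal one, and unisolvency (Lemma \ref{lemma: unisolvent elliptic}) then forces it to coincide with $\sum_{j=1}^4\phi_{j,T}$ in either flavor. The paper itself omits the proof and defers to \cite{2013ZhangTHESIS}, but the unisolvency argument you give is the standard one for this type of statement, so there is nothing to flag.
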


\begin{lemma}\label{theorem: nonconforming IFE midpoint basis consistency}
{\bf (Consistency)} On each interface element $T \in \mathcal{T}_h^i$, the IFE basis functions are consistent to standard finite element basis functions in the following sense:
\begin{enumerate}
  \item If there is no jump in the coefficient, {\it i.e.}, $\beta^+ =
    \beta^-$, then the IFE basis functions  $\phi_{j,T}$ become the standard FE basis functions $\psi_{j,T}$.
  \item If $\min\{|T^+|,|T^-|\}$ shrinks to zero, then the IFE
basis functions $\phi_{j,T}$ become the standard FE basis
functions $\psi_{j,T}$. Here, $|T^s|$ denotes the area of $T^s$, $s=+,-$.
\end{enumerate}
\end{lemma}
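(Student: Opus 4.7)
The plan is to reduce both claims to the unisolvency stated in Lemma~\ref{lemma: unisolvent elliptic}: it suffices to exhibit a piecewise polynomial of the form \eqref{eq: nonconforming IFE basis T2} satisfying the integral-mean constraints \eqref{eq: nonconforming IFE basis partial nodal} together with the jump conditions \eqref{eq: nonconforming IFE basis DE continuity}--\eqref{eq: nonconforming IFE basis flux continuity}; by uniqueness, this polynomial must coincide with $\phi^I_{j,T}$, and analogously for $\phi^P_{j,T}$ with midpoint-value constraints.

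For Part 1, assume $\beta^+=\beta^-$ and take the standard rotated-$Q_1$ FE basis function $\psi^I_{j,T}\in\Pi_T$, viewed as a single polynomial on the whole of $T$. Setting $c_k^+=c_k^-$ equal to its coefficients, both equalities in \eqref{eq: nonconforming IFE basis DE continuity} become trivial, the jump $[\bnu_{\overline{DE}}\cdot\beta\nabla\psi^I_{j,T}]$ vanishes pointwise on $\overline{DE}$ (so \eqref{eq: nonconforming IFE basis flux continuity} holds), and the integral-mean constraints are met by definition of $\psi^I_{j,T}$. Lemma~\ref{lemma: unisolvent elliptic} then forces $\phi^I_{j,T}=\psi^I_{j,T}$, and the identical argument with midpoint constraints yields $\phi^P_{j,T}=\psi^P_{j,T}$.

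For Part 2 the shortcut above fails when $\beta^+\neq\beta^-$, because the flux integrand then becomes $(\beta^+-\beta^-)\bnu_{\overline{DE}}\cdot\nabla\psi^I_{j,T}$, which generically does not vanish on the interior chord $\overline{DE}$. Instead, I would view $\phi^I_{j,T}$ as the solution $\mathbf{C}(d,e)$ of the parametric linear system $M_c\mathbf{C}=\mathbf{V}_j$ on the open parameter square $(d,e)\in(0,1)^2$, where $M_c$ is nonsingular and depends smoothly on $(d,e)$. The limit $|T^-|\to 0$ for a Type~II element corresponds to $(d,e)$ approaching $(0,0)$ or $(1,1)$; in either limit the chord $\overline{DE}$ collapses onto one of the vertical edges $\cbe_2,\cbe_4$ of $T$ and its endpoints $D,E$ coincide with two vertices of $T$. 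Exploiting this geometric degeneracy in \eqref{eq: nonconforming IFE basis DE continuity}--\eqref{eq: nonconforming IFE basis flux continuity}, together with $c_4^+=c_4^-$, should collapse the system so that the constraints enforcing matching across $\overline{DE}$ force $c_k^+=c_k^-$ for all $k$ in the limit; the remaining four integral-mean conditions on $\cbe_1,\dots,\cbe_4$ then uniquely identify the common polynomial with $\psi^I_{j,T}$. The argument for the midpoint-value basis, and for Type~I elements, proceeds in the same manner.

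The main obstacle is controlling $\mathbf{C}(d,e)$ up to the degenerate boundary of $(0,1)^2$: nonsingularity of $M_c$ on the open square does not automatically imply a finite limit at the corners, so one must either work directly from the closed-form expressions for $\mathbf{C}(d,e)$ derived in \cite{2013ZhangTHESIS} or bound $\|M_c^{-1}\|$ uniformly as the interface approaches an element edge. Once such continuity is secured, the algebraic collapse described above identifies the limit unambiguously with the standard FE basis function and completes Part~2.
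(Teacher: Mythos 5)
First, a point of reference: the paper offers no proof of this lemma at all --- it is listed among the basic properties whose proofs are deferred to \cite{2013ZhangTHESIS} --- so your attempt can only be judged on its own terms. Your Part 1 is correct and complete: exhibiting the single polynomial $\psi^{\iota}_{j,T}$ as a candidate satisfying \eqref{eq: nonconforming IFE basis partial nodal}--\eqref{eq: nonconforming IFE basis flux continuity} (all jump conditions being trivial for a one-piece polynomial when $\beta^+=\beta^-$) and invoking the unisolvency of Lemma \ref{lemma: unisolvent elliptic} is exactly the right argument.

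Part 2, however, has a genuine gap, and you have located it yourself: the entire content of the claim is that $\mathbf{C}(d,e)=M_c^{-1}\mathbf{V}_j$ stays bounded and converges as $(d,e)$ tends to the degenerate corners of the parameter square, and you do not establish this. Nonsingularity of $M_c$ on the open square says nothing at its boundary; a uniform bound on $\|M_c^{-1}\|$ independent of $(d,e)$ (or direct inspection of the closed-form solution) is precisely what must be supplied, and without it the rest is a plan rather than a proof. Moreover, the limiting mechanism you describe is not what actually happens. Take a Type II element with $(d,e)\to(0,0)$, so that $\overline{DE}\to\cbe_4\subset\{x=x_1\}$, and set $w=\phi_T^{I,+}-\phi_T^{I,-}$. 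The limit of \eqref{eq: nonconforming IFE basis DE continuity} gives $a_4=0$ and $w(A_1)=w(A_3)=0$, hence $w=a_2(x-x_1)$; the limit of \eqref{eq: nonconforming IFE basis flux continuity} reads $\beta^+(c_2^++2c_4x_1)=\beta^-(c_2^-+2c_4x_1)$, which determines $a_2=\bigl(1-\tfrac{\beta^+}{\beta^-}\bigr)(c_2^++2c_4x_1)$ and is generically \emph{nonzero} when $\beta^+\neq\beta^-$. So the constraints do not collapse to $c_k^+=c_k^-$ for all $k$. What rescues the lemma is that $w$ vanishes identically on the degenerate edge $\{x=x_1\}$ and that $T^-$ shrinks onto that edge, so (i) the four edge-mean or midpoint conditions become conditions on $\phi_T^{I,+}$ alone, forcing $\phi_T^{I,+}\to\psi^{I}_{j,T}$ on all of $T$, and (ii) on $T^-$ one has $|w|\le|a_2|\,\max(d,e)h\to 0$, so the piecewise function still converges uniformly to $\psi^{I}_{j,T}$ even though the coefficients $c_k^-$ do not converge to $c_k^+$. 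You need to replace the claimed coefficient collapse by this argument and supply the uniform control of $\mathbf{C}(d,e)$ near the degenerate boundary before Part 2 is complete.
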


\begin{lemma}\label{lemma: nonconforming IFE midpoint basis actual flux}
{\bf (Flux continuity on $\G$)}
On each interface element $T \in \mathcal{T}_h^i$, every IFE function $\phi_T\in S^i_h(T)$ satisfies the flux jump condition weakly as follows
\begin{equation*}
    \int_{\G\cap T} \djump{\bn\cdot\beta\nabla\phi_T}{\G\cap T}\mbox{d}s = 0,
\end{equation*}
where ${\bn}$ is the unit normal to $\G$.
\end{lemma}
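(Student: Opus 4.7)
The plan is to reduce the weak flux continuity on the curved piece $\Gamma\cap T$ to the already-imposed weak flux continuity on the chord $\overline{DE}$ via the divergence theorem. The key structural observation is that every piece of an IFE function is a \emph{harmonic} rotated-$Q_1$ polynomial. Indeed, writing $\phi_T|_{T^\pm} = c_1^\pm + c_2^\pm x + c_3^\pm y + c_4^\pm(x^2-y^2)$, one has $\Delta \phi_T^\pm = 2c_4^\pm - 2c_4^\pm = 0$, so each $\nabla \phi_T^\pm$, viewed as a polynomial vector field naturally extended to all of $T$, is divergence-free. The jump on $\Gamma\cap T$ is interpreted through these polynomial extensions, i.e., $[{\bn}\cdot\beta\nabla\phi_T] := {\bn}\cdot(\beta^+\nabla\phi_T^+ - \beta^-\nabla\phi_T^-)$.

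Let $R \subset T$ denote the open lens-shaped region enclosed between $\Gamma\cap T$ and $\overline{DE}$, which share the endpoints $D$ and $E$. By hypothesis (H1) and the $C^2$-regularity of $\Gamma$, for $h$ sufficiently small this region is well defined and lies entirely on one side of $\overline{DE}$. Applying the divergence theorem to the polynomial vector field $\beta^+\nabla\phi_T^+$ on $R$ gives
$$
\int_{\Gamma\cap T} \beta^+\nabla\phi_T^+\cdot{\bn}\,\mbox{d}s \;=\; \int_{\overline{DE}} \beta^+\nabla\phi_T^+\cdot{\bnu}_{\overline{DE}}\,\mbox{d}s,
$$
provided the normals are oriented consistently (say, both pointing from the $-$ side into the $+$ side). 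The identical argument applied to $\beta^-\nabla\phi_T^-$ yields the analogous identity. Subtracting the two identities and invoking the construction condition \eqref{eq: nonconforming IFE basis flux continuity} produces
$$
\int_{\Gamma\cap T} [{\bn}\cdot\beta\nabla\phi_T]\,\mbox{d}s \;=\; \int_{\overline{DE}} [{\bnu}_{\overline{DE}}\cdot\beta\nabla\phi_T]\,\mbox{d}s \;=\; 0,
$$
which is precisely the claim.

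The main technical care is in orienting the normals of $\Gamma\cap T$ and $\overline{DE}$ consistently; the correct sign depends on whether the lens $R$ sits on the $+$ side or the $-$ side of $\overline{DE}$, and this can differ between Type I and Type II interface elements. Beyond this bookkeeping no computation is required, since the harmonicity of $\phi_T^\pm$ supplies the divergence-free property for free. Because the whole argument uses only linearity and the imposed flux continuity on $\overline{DE}$, the conclusion extends from basis functions to every $\phi_T\in\cS_h^i(T)$ without modification.
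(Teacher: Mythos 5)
Your argument is correct, and it is the standard proof of this fact (the paper itself does not prove the lemma in-text; it defers all of the properties in this subsection to \cite{2013ZhangTHESIS}, where essentially this divergence-theorem argument appears). The key points --- harmonicity of each piece $\phi_T^{\pm}\in\Span\{1,x,y,x^2-y^2\}$, hence divergence-free extended fields $\beta^{\pm}\nabla\phi_T^{\pm}$, and the reduction of the flux integral over $\Gamma\cap T$ to the one over $\overline{DE}$ imposed in \eqref{eq: nonconforming IFE basis flux continuity} --- are exactly right, as is your reading of the jump on $\Gamma\cap T$ as $\bn\cdot(\beta^+\nabla\phi_T^+-\beta^-\nabla\phi_T^-)$ via the polynomial extensions. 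One small repair: hypotheses (H1)--(H2) and $C^2$-regularity of $\Gamma$ do not by themselves force the curve $\Gamma\cap T$ to stay on one side of its chord $\overline{DE}$, so the ``lens'' may degenerate into several signed pieces. This costs you nothing: since each field $\beta^{\pm}\nabla\phi_T^{\pm}$ is divergence-free on all of $T$ (which is simply connected), its flux through any curve joining $D$ to $E$ is path-independent (apply the divergence theorem to the closed curve obtained by concatenating $\Gamma\cap T$ with the reversal of $\overline{DE}$, counted with multiplicity). Stating the step this way removes the need for the one-sidedness assumption and also disposes of the Type I versus Type II orientation bookkeeping in a uniform manner.
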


\begin{lemma}\label{theorem: nonconforming IFE basis bound}
{\bf (Boundedness)} There exists a constant $C$, independent of
interface location, such that for $j=1,2,3,4$, and $k = 0,1,2$,
\begin{equation}\label{eq: IFE basis bound}
    \|\phi_{j,T}\|_{k,\infty,T} \leq Ch^{-k} ~~~\forall~ T \in \mathcal{T}_h^i.
\end{equation}
\end{lemma}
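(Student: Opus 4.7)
I would prove the bound in two stages: first reduce to a reference element via affine scaling, then establish a uniform $\ell^\infty$ bound on the IFE basis coefficients in the interface parameters $(d,e)$.

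First, I map $T$ onto the reference square $\hat T = [0,1]^2$ via the affine diffeomorphism $F_T(\hat x,\hat y) = (x_1 + h\hat x, y_1 + h\hat y)$. The shape-function space $\Pi_T$ is preserved (modulo a change of polynomial basis), and the interface points transfer to $\hat D = (d,0)$, $\hat E = (e,1)$. Let $\hat\phi^I_{j,\hat T}$ denote the IFE basis function constructed on $\hat T$ with the same parameters $(d,e,\beta^\pm)$. Then $\phi^I_{j,T}(x,y) = \hat\phi^I_{j,\hat T}(F_T^{-1}(x,y))$, and a chain-rule computation gives $|\phi^I_{j,T}|_{k,\infty,T} = h^{-k}\,|\hat\phi^I_{j,\hat T}|_{k,\infty,\hat T}$ for $k=0,1,2$. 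Thus it suffices to show that $\|\hat\phi^I_{j,\hat T}\|_{k,\infty,\hat T}$ is bounded by a constant depending only on $\beta^\pm$, and not on $(d,e)$.

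Second, the coefficient vector $\mathbf{C}_j = (c_1^-,\dots,c_4^-,c_1^+,\dots,c_4^+)^t$ of $\hat\phi^I_{j,\hat T}$ satisfies $\hat M_c(d,e)\mathbf{C}_j = \mathbf{e}_j$, where the entries of $\hat M_c$ are polynomials in $(d,e)$. Since each $W^{k,\infty}$ norm of $\hat\phi^I_{j,\hat T}$ on the fixed reference element is controlled linearly by $\|\mathbf{C}_j\|_\infty$, Lemma \ref{lemma: unisolvent elliptic} and Cramer's rule imply that $(d,e)\mapsto\|\hat\phi^I_{j,\hat T}\|_{k,\infty,\hat T}$ is continuous on the open square $(0,1)^2$. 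For any sequence $(d,e)\to\partial[0,1]^2$, one subelement $\hat T^\pm$ degenerates to measure zero, and Lemma \ref{theorem: nonconforming IFE midpoint basis consistency} identifies the limit of $\hat\phi^I_{j,\hat T}$ with the standard rotated-$Q_1$ FE basis function $\hat\psi^I_{j,\hat T}$, whose $W^{k,\infty}$ norm is an explicit finite constant. Consequently the map extends continuously to the compact square $[0,1]^2$ and is uniformly bounded, yielding the desired estimate after reversing the scaling. The same reasoning applies verbatim to $\hat\phi^P_{j,\hat T}$, with only the first four rows of $\hat M_c$ (encoding midpoint-value rather than edge-average constraints) modified.

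The main obstacle lies in passing to the boundary $\partial[0,1]^2$. Individual entries of $\hat M_c^{-1}$ may contain factors like $1/d$ or $1/(1-d)$ coming from cofactor expansion, so the coefficient matrix itself is not obviously bounded in the limit where a subelement degenerates. What saves the estimate is that those apparent singularities cancel exactly in the combinations determining $\hat\phi^I_{j,\hat T}$, producing the standard FE basis as the limit — this is the quantitative content of the consistency Lemma \ref{theorem: nonconforming IFE midpoint basis consistency}. Verifying the cancellation requires a direct, lengthy computation with the explicit form of $\hat M_c$, already carried out in \cite{2013ZhangTHESIS}, but the compactness argument above packages the conclusion cleanly.
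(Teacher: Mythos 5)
First, note that the paper itself contains no proof of this lemma: Section~2.4 defers all of these basic properties to \cite{2013ZhangTHESIS}, where the bound is obtained by solving the $8\times 8$ system for the coefficients explicitly and bounding them directly. Your scaling-plus-compactness strategy is therefore a genuinely different (and in principle cleaner) route. The first stage is correct: because $F_T$ is a uniform dilation plus translation, it preserves the span $\{1,x,y,x^2-y^2\}$, maps edge averages to edge averages and midpoints to midpoints, and transforms all three jump conditions \eqref{eq: nonconforming IFE basis DE continuity}--\eqref{eq: nonconforming IFE basis flux continuity} homogeneously (normals are preserved, and $c_4^\pm$ rescales by the same factor $h^{2}$ on both sides), so the pullback of $\phi^I_{j,T}$ is exactly the reference IFE basis function with the same $(d,e,\beta^\pm)$ and the factor $h^{-k}$ comes out of the chain rule as claimed.

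The gap is in the boundary analysis of the compactness step. Your assertion that ``for any sequence $(d,e)\to\partial[0,1]^2$, one subelement degenerates to measure zero'' is false. For a Type~II element with $\hat D=(d,0)$, $\hat E=(e,1)$, take $(d,e)\to(0,e_0)$ with $e_0\in(0,1)$: the point $\hat D$ migrates to the vertex $\hat A_1$ while $|\hat T^-|\to e_0/2>0$ and $|\hat T^+|\to 1-e_0/2>0$. The same happens along most of $\partial[0,1]^2$ (and at $(d,e)\to(0,1)$ the interface becomes a full diagonal); an analogous failure occurs for Type~I elements. At all such boundary points Lemma~\ref{theorem: nonconforming IFE midpoint basis consistency} is silent, since its hypothesis $\min\{|T^+|,|T^-|\}\to 0$ is not met, so the continuous extension of $(d,e)\mapsto\|\hat\phi^I_{j,\hat T}\|_{k,\infty,\hat T}$ to the closed square is not established there and the uniform bound does not follow. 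To close the argument you must additionally show that $\det\hat M_c(d,e)$ stays bounded away from zero on the non-degenerate portion of $\partial[0,1]^2$ (unisolvency on the open square does not imply this), which again requires the explicit determinant formula from \cite{2013ZhangTHESIS} --- i.e.\ essentially the same hard computation the compactness argument was meant to encapsulate, now needed on the boundary segments as well as at the degenerate corners. With that determinant estimate added (and the Type~I parametrization treated separately), the argument is complete.
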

\begin{theorem}\label{theorem: trace inequality}
{\bf(Trace Inequality)} There exists a constant $C>0,$ depending
only on the diffusion coefficient $\beta,$ such that
\begin{equation}\label{eq: trace IFE}
    \|\bn\cdot\beta\nabla v\|_{0,\gamma} \leq Ch_T^{-\frac12}\|\nabla v\|_{0,T} ~~\forall v\in S_h^i(T)
\end{equation}
where $\gamma$ is an edge of $T$, and $\bnu$ is the unit outward normal to $T$.
\end{theorem}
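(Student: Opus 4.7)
The plan is to reduce the trace inequality to the classical one for polynomials on a rectangle, supplemented by an extension estimate that is encoded in the interface jump conditions built into $\cS_h^i(T)$.

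Let $v\in\cS_h^i(T)$ and write $v^\pm = v|_{T^\pm}\in\Pi_T$. Since $\beta$ is piecewise constant and each $v^\pm$ is already a polynomial defined on all of $T$, splitting $\gamma = \gamma^+\cup\gamma^-$ with $\gamma^\pm = \gamma\cap\overline{T^\pm}$ (one piece possibly empty) gives
\begin{equation*}
\|\bn\cdot\beta\nabla v\|_{0,\gamma}^2 = (\beta^+)^2\|\bn\cdot\nabla v^+\|_{0,\gamma^+}^2 + (\beta^-)^2\|\bn\cdot\nabla v^-\|_{0,\gamma^-}^2 \le \sum_{s=\pm}(\beta^s)^2\|\bn\cdot\nabla v^s\|_{0,\gamma}^2.
\end{equation*}
The standard scaled trace inequality for polynomials in $\Pi_T$ on the rectangle $T$ then yields $\|\bn\cdot\nabla v^s\|_{0,\gamma}^2\le Ch_T^{-1}\|\nabla v^s\|_{0,T}^2$ for $s=\pm$. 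It therefore suffices to prove the \emph{extension estimate}
\begin{equation*}
\beta^s\,\|\nabla v^s\|_{0,T}^2 \le C(\beta)\,\|\sqrt{\beta}\,\nabla v\|_{0,T}^2, \qquad s=\pm,
\end{equation*}
with $C(\beta)$ independent of how $\G$ cuts $T$.

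To establish the extension estimate I would set $w = v^+ - v^-\in\Pi_T$. The continuity portion of \eqref{eq: nonconforming IFE basis DE continuity} forces the coefficients of $x^2-y^2$ in $v^\pm$ to coincide, so $w$ is affine; combined with $w(D)=w(E)=0$, this makes $w$ vanish on the entire line through $\overline{DE}$, hence $\nabla w$ is a constant vector parallel to $\bn_{\overline{DE}}$. Substituting $\nabla v^+ = \nabla v^- + \nabla w$ into the flux condition \eqref{eq: nonconforming IFE basis flux continuity}, and using that the mean of the linear polynomial $\nabla v^-$ over $\overline{DE}$ equals its value at the midpoint $M$ of $\overline{DE}$, delivers $\bn_{\overline{DE}}\cdot\nabla w = \tfrac{\beta^- - \beta^+}{\beta^+}\,\bn_{\overline{DE}}\cdot\nabla v^-(M)$; symmetrically, $\bn_{\overline{DE}}\cdot\nabla w = \tfrac{\beta^- - \beta^+}{\beta^-}\,\bn_{\overline{DE}}\cdot\nabla v^+(M)$. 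These two identities control $|\nabla w|$ by the gradient of whichever of $v^\pm$ lives on the larger sub-element; plugging $\nabla v^\mp = \nabla v^\pm\mp\nabla w$ into the integral over the smaller sub-element then transfers the bound across the interface.

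The main obstacle is the extension estimate, since one of the sub-elements $T^\pm$ may have arbitrarily small area, so classical norm equivalence on polynomials cannot bound $\|\nabla v^s\|_{0,T}$ by $\|\nabla v^s\|_{0,T^s}$ uniformly in the cut. The argument instead rests on two structural consequences of the IFE construction that fail for generic piecewise polynomials: the continuity jump condition forces $\nabla w$ to be a constant vector rather than a linear polynomial, and the flux condition supplies two symmetric expressions for $\bn_{\overline{DE}}\cdot\nabla w$, allowing the bound to be routed through the sub-element with the larger measure. Combining these with the pointwise inverse inequality $\|p\|_{\infty,T}\le Ch_T^{-1}\|p\|_{0,T}$ for linear $p$ and the trivial bound $|T^\pm|\le h_T^2$ yields the extension estimate with a constant depending only on $\beta^+$ and $\beta^-$. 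The bookkeeping that verifies independence of the interface parameters $d,e$ is the most delicate step and follows the explicit basis analysis in \cite{2013ZhangTHESIS}.
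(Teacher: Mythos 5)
The paper itself never proves this theorem in the text --- all of the lemmas in Section~2.4 are deferred to \cite{2013ZhangTHESIS} --- so there is no in-paper argument to compare against, and your proposal has to stand on its own. Its architecture is sound and it isolates the real difficulty correctly: after splitting $\gamma$ into $\gamma^\pm$ and extending each integral to the whole edge, the scaled polynomial trace inequality reduces everything to the extension estimate $\|\nabla v^s\|_{0,T}\le C(\beta)\|\sqrt\beta\,\nabla v\|_{0,T}/\sqrt{\beta^s}$, which cannot come from norm equivalence on $T^s$ alone when $T^s$ is a sliver. Your use of the jump conditions is also right: $c_4^+=c_4^-$ together with $w(D)=w(E)=0$ forces $\nabla w$ to be a constant multiple of $\bn_{\overline{DE}}$, the flux condition plus the midpoint-rule identity for the affine function $\bn_{\overline{DE}}\cdot\nabla v^\pm$ gives the two symmetric formulas for $\bn_{\overline{DE}}\cdot\nabla w$, and since $\nabla w\parallel\bn_{\overline{DE}}$ this controls all of $\nabla w$.

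The gap is in your final paragraph. To transfer control across the interface you need $|\nabla v^s(M)|\le Ch_T^{-1}\|\nabla v^s\|_{0,T^s}$ for the \emph{larger} sub-element $T^s$, with $C$ independent of $d$ and $e$. The two facts you cite --- the inverse inequality $\|p\|_{\infty,T}\le Ch_T^{-1}\|p\|_{0,T}$ over the whole element, and the bound $|T^\pm|\le h_T^2$ --- do not deliver this: the inverse inequality has $\|p\|_{0,T}$ on the right rather than $\|p\|_{0,T^s}$, and an \emph{upper} bound on $|T^s|$ points the wrong way. What is actually needed is the lower bound $|T^s|\ge h_T^2/2$ (which holds for at least one of $s=\pm$ because $T^+$ and $T^-$ partition $T$) together with a uniform norm equivalence: for componentwise affine $p$ on $T$ and any measurable $S\subset T$ with $|S|\ge h_T^2/2$, one has $\|p\|_{\infty,T}\le Ch_T^{-1}\|p\|_{0,S}$ with an absolute $C$. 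This is true --- scale to the reference square and note that the sublevel set $\{|p|<\delta\|p\|_{\infty}\}$ of an affine function has measure $O(\delta)$, so a set of measure $\ge 1/2$ cannot concentrate where $p$ is small --- but it is exactly the ``delicate bookkeeping'' you defer to \cite{2013ZhangTHESIS}, and it is the only place where independence of the interface location actually enters. State and prove that one lemma and your argument closes; you should also add a sentence for Type~I interface elements, where the identical reasoning applies because $w$ is again affine and vanishes on the line through $D$ and $E$.
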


\begin{theorem}\label{theorem: inverse inequality}
{\bf (Inverse Inequality)} There exists a constant $C$, depending only on the diffusion coefficient $\beta$, such that
\begin{equation}\label{eq: inverse IFE}
    |v|_{k,\infty,T} \leq Ch_T^{-1}|v|_{k,T},~~
    |v|_{k,T} \leq Ch_T^{l-k}|v|_{l,T}, ~~\forall v\in S_h^i(T),~~0\leq l\leq k\leq 2.
\end{equation}
\end{theorem}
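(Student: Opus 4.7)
My plan is to pass to a reference element via affine scaling, reduce both inequalities to $h$-independent norm comparisons on a four-dimensional space, and then handle the single nontrivial issue: that the constants can be chosen uniformly in the interface location. Let $F_T(\hat{\mbx}) = \mbx_0 + h_T \hat{\mbx}$ map $\hat T = [0,1]^2$ onto $T$, and set $\hat v = v \circ F_T$. Applying the chain rule on each subelement $T^\pm$ and summing in the broken sense gives the standard scalings $|v|_{k,T} = h_T^{1-k}\,|\hat v|_{k,\hat T}$ and $|v|_{k,\infty,T} = h_T^{-k}\,|\hat v|_{k,\infty,\hat T}$. Substitution reduces the two inequalities of the theorem to the $h$-independent statements
\[
|\hat v|_{k,\infty,\hat T} \le C\,|\hat v|_{k,\hat T},\qquad
|\hat v|_{k,\hat T} \le C\,|\hat v|_{l,\hat T},\qquad 0\le l\le k\le 2,
\]
for every $\hat v \in \cS_h^i(\hat T)$, with $C$ depending only on $\beta^\pm$.

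By the Unisolvency Lemma~\ref{lemma: unisolvent elliptic}, the reference IFE space $\cS_h^i(\hat T)$ is four-dimensional; on this finite-dimensional space the seminorm $|\cdot|_{k,\hat T}$ has the same kernel $\mathcal K_k$ as $|\cdot|_{k,\infty,\hat T}$, namely the subspace of IFE functions whose restrictions to $\hat T^\pm$ are polynomials of degree strictly less than $k$. Since $\mathcal K_l \subset \mathcal K_k$ for $l \le k$, both desired bounds become the statement \emph{``a seminorm is dominated by a norm on a finite-dimensional quotient,''} which automatically holds with some constant $C_\ast = C_\ast(d,e,\beta^\pm)$ depending on the interface parameters.

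The main obstacle is uniformity in $(d,e)$: a direct use of the standard polynomial inverse inequalities on each subelement $\hat T^\pm$ separately would produce a constant of order $|\hat T^\pm|^{-1}$, which diverges as $\min(|\hat T^+|,|\hat T^-|) \to 0$. To circumvent this I would use that the IFE basis functions depend continuously on $(d,e)$: by Cramer's rule applied to the $8\times 8$ system $M_c\mathbf C = \mathbf V$ from Section~\ref{sec: nonconforming IFE functions}, whose determinant is bounded below uniformly in $(d,e)$ for fixed $\beta^\pm>0$ (as established in \cite{2013ZhangTHESIS}), the basis varies continuously in the parameters. Combined with the Consistency Lemma~\ref{theorem: nonconforming IFE midpoint basis consistency}, which shows that the IFE basis extends continuously to the standard rotated-$Q_1$ FE basis at the degenerate boundary of the parameter region, this makes $C_\ast$ a continuous function on a compact parameter set, hence bounded. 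Undoing the scaling yields the theorem; the same argument handles the Type~I configuration, and the midpoint-value IFE space via $\cS_h^{P,i}(T) = \cS_h^{I,i}(T)$.
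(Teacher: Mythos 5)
The paper offers no proof of this theorem to compare against: it appears in the list of ``basic properties'' whose proofs are deferred wholesale to \cite{2013ZhangTHESIS}. Judged on its own, your argument follows the standard route and is essentially sound: the scaling exponents $|v|_{k,T}=h_T^{1-k}|\hat v|_{k,\hat T}$ and $|v|_{k,\infty,T}=h_T^{-k}|\hat v|_{k,\infty,\hat T}$ are correct, the jump conditions are invariant under the affine map so the pulled-back space is a four-dimensional reference IFE space, the kernel bookkeeping ($\mathcal K_l\subset\mathcal K_k$, identical kernels for the two $k$-th order seminorms) is right, and you correctly isolate the genuine issue, namely uniformity of the finite-dimensional equivalence constant over the interface parameters $(d,e)$, where a naive subelement-by-subelement polynomial inverse estimate would blow up as $\min(|T^+|,|T^-|)\to 0$. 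Two soft spots remain in that uniformity step. First, you assert that $\det M_c$ is bounded below uniformly in $(d,e)$; the paper only claims nonsingularity for $0<d,e<1$, which by itself does not rule out degeneration at the boundary of the parameter square. A cleaner move is to invoke Lemma~\ref{theorem: nonconforming IFE basis bound} directly: the bound $\|\phi^I_{j,T}\|_{k,\infty,T}\le Ch^{-k}$ with $C$ independent of the interface location is exactly the uniform control of the basis that your compactness argument requires. Second, Lemma~\ref{theorem: nonconforming IFE midpoint basis consistency} covers only the degeneration $\min(|T^+|,|T^-|)\to 0$; the closure of the parameter set also contains configurations where an intersection point reaches a vertex while both subelements retain positive area, and continuity of the basis (and of the broken seminorms, whose integration domains $\hat T^\pm$ move with $(d,e)$) at such points must come from the explicit rational dependence of the coefficients on $(d,e)$ rather than from the consistency lemma. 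Neither issue is fatal, but as written the compactness argument is not fully airtight at the boundary of the parameter domain.
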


\section{The Interpolation Operator and Approximation Capability}
In this section, we discuss the approximation capability for the nonconforming IFE
space $S_h(\Omega)$.
On each non-interface element $T\in\mathcal{T}_h^n$, the local
interpolation is defined  canonically by $\cI_{T}: C(\overline{T}) \to S_h^n(T),$
such that,
\begin{equation}\label{eq: local interpolation noninterface element}
    \cI_{T}u = \sum_{i=1}^4 \left(\frac{1}{|\cbe_i|}\int_{\cbe_i}u \ds\right)\psi_{i,T},
\end{equation}
where $\gamma_j$  denote the edges of $T$. The standard scaling argument leads to the
following error estimates \cite[Lemma 1]{1992RanacherTurek}:
\begin{equation}\label{eq: interpolation noninterface}
    \|\cI_{T}u-u\|_{0,T} + h|\cI_{T}u-u|_{1,T}\leq Ch^2|u|_{2,T}.
\end{equation}
On each interface element $T \in \mathcal{T}_h^i,$ the interpolation operator
$
\cI_{T}:C(\overline T) \to S_h^i(T)
$
is defined similarly as follows:
%For an interface element $T$, we define the interpolation operators, still using the same notations, $\cI^P_{T}$ and $\cI^I_{T}$ from $\PHb^2(T)$ to $\cS_h^i(T)$ by IFE basis %functions
\begin{equation}\label{eq: interpolation IFE local}
    \cI_{T} u = \sum_{i=1}^4 \left(\frac{1}{|\cbe_i|}\int_{\cbe_i}u\ds\right)\phi_{i,T}~~\forall~ u \in C(\overline{T}).
\end{equation}
Finally, we define the global IFE interpolation
$\cI_h: C(\overline\O) \to
S_h(\O)$ piecewisely such that
\begin{equation*}
    (\cI_h u)|_{T} = \cI_{T} u\quad \forall\ T\in \mathcal{T}_h.
\end{equation*}
%In what follows, we focus on the interpolation error analysis on interface elements.

The error estimates for the interpolation operator on interface elements are reported in \cite{2016GuoLin, 2016GuoLinZhang, 2013ZhangTHESIS}. We only state the results in the following theorems.
\begin{theorem}
There exists a constant $C>0$, independent of interface location,
such that
\begin{equation}\label{eq: interpolation error integral}
    \|\cI_{T}u-u\|_{0,T} + h|\cI_{T}u-u|_{1,T}\leq
    Ch^2\|u\|_{\PHb^2(T)}
\forany u\in \PHb^2(T),
\end{equation}
%\begin{equation}\label{eq: interpolation error midpoint}
%    \|\cI^P_{T}u-u\|_{0,T} + h|\cI^P_{T}u-u|_{1,T}\leq
%    Ch^2\|u\|_{\PHb^2(T)}
%\forany u\in \PHb^2(T)
%\end{equation}
on interface element $T\in\mathcal{T}_h^i$.
\end{theorem}
%
%The global interpolation error estimate follows from combining \eqref{eq: interpolation noninterface} and \eqref{eq: interpolation error midpoint} and summing over all the elements.
\begin{theorem}\label{th:I_h^I_bound}
There exists a constant $C>0$, such that the following interpolation error estimate holds:
\begin{equation}\label{eq: global interpolation error integral}
    \|\cI_{h}u-u\|_{0,\O} +
    h\left(\sum_{T\in\mathcal{T}_h}|\cI_{T}u-u|_{1,T}^2\right)^{\frac12}\leq
    Ch^2\|u\|_{\PHb^2(\O)}
\forany u\in \PHb^2(\O).
\end{equation}
%\begin{equation}\label{eq: global interpolation error midpoint}
%    \|\cI^P_{h}u-u\|_{0,\O} +
%    h\left(\sum_{T\in\mathcal{T}_h}|\cI^P_{T}u-u|_{1,T}^2\right)^{\frac12}\leq
%    Ch^2\|u\|_{\PHb^2(\O)}
%\forany u\in \PHb^2(\O).
%\end{equation}
\end{theorem}

\section{The IFE Galerkin Method and Error Estimates}
\label{sec: error analysis}

In this section, we consider a nonconforming IFE Galerkin method and
carry out its error estimation.

\subsection{The nonconforming IFE Galerkin method}
\begin{comment}
To introduce the scheme, we need to introduce a few more notations. We denote the set of interior edges by $\cEz$. For every $\cbe\in \cEz$, we denote the two elements that share $\cbe$ as their common edge by $T_{\cbe,1}$ and $T_{\cbe,2}$. For a function $u$ defined on $T_{\cbe,1}\cup T_{\cbe,2}$, we define their jump and average value on $b$ as follows:
\begin{equation}\label{eq: jump and average}
    [u]_\cbe = u|_{T_{\cbe,1}} - u|_{T_{\cbe,2}}, ~~~~ \{u\}_\cbe = \frac{1}{2}\left(u|_{T_{\cbe,1}} + u|_{T_{\cbe,2}}\right).
\end{equation}
We extend these definition of jump and average to edges that belong to the boundary $\partial \O$:
\begin{equation}\label{eq: jump and average boundary}
    [u]_\cbe = \{u\}_\cbe = u|_{T_{\cbe}}, ~~~~\forall~ \cbe\in \partial \O.
\end{equation}
Here $T_\cbe$ is the element such that $\cbe$ is one of its edges. We usually
omit the subscript on $\{\cdot\}$ and $[\cdot]$ if there is no confusion.
\end{comment}
Given a mesh $\mathcal{T}_h$, we denote by $\cE$, $\cEz$ and $\cE^b$ the set of its edges, interior edges, and boundary edges, respectively. The
sets of interface edges and non-interface edges are denoted by $\cE^i$ and $\cE^n$, respectively. For the sake of simplicity, in the
following discussion, we assume that the interface curve $\G$ does not intersect the boundary $\partial \O$.
Consequently, $\cE^b\subset\cE^n$.

%$\cEz^i = \cE^i$ and $\cbe\in \cE^n$ whenever
%$\cbe\subset\partial \O.$
%The
%sets of interior interface edges and interior non-interface edges are denoted
%by $\cEz^i$ and $\cEz^n$, respectively.
Define the bilinear and linear form
\begin{eqnarray*}
% \nonumber to remove numbering (before each equation)
  a_h (u,v) = \sum_{T\in\mathcal{T}_h}\int_T\beta\nabla u \cdot \nabla v
  \dbx, \quad
  L(v) = \sum_{T\in\mathcal{T}_h}\int_T f v\dbx.
\end{eqnarray*}
The nonconforming IFE Galerkin method is to find $u_h\in S_h(\Omega)$ such that
\begin{equation}\label{eq: galerkin weak form}
    a_h (u_h,v_h) = L(v_h),~~~\forall~v_h\in \mathring{S}_h(\Omega),
\end{equation}
subject to the boundary conditions:
\begin{equation}\label{eq: bc}
    \int_\g u_h \ds = \int_\g g \ds,~~~~~\forall~\g\in \cE^b,
\end{equation}
and the test function space is defined as follows
\begin{eqnarray*}
% \nonumber to remove numbering (before each equation)
  \mathring{S}_h(\O) &=& \{v\in S_h(\O): \int_{\cbe}v\ds = 0, \text{ if } \cbe\in \cE^b \}.
\end{eqnarray*}
In the following, we derive the error estimation of IFE solution $u_h$. 
\subsection{Projection operators}
\label{section: error Analysis}

%We assume that the collection of interface elements $\mathcal{T}_h^i$ in the mesh $\mathcal{T}_h$ satisfies:
%\begin{description}
%  \item[(H5)] There exists a constant $C$ such that the number of interface element, denoted by $|\mathcal{T}_h^i|$ satisfies
%  \begin{equation}\label{eq: number of interface element}
%    |\mathcal{T}_h^i| \leq Ch^{-1}.
%  \end{equation}
%\end{description}
%Define the broken energy norm $\|\cdot\|_{1,h}$ on $\cS_h^P(\O)\cup
%\NChz$ by
%\begin{equation}\label{eq: energy norm}
% \|v\|_{1,h} = \left(\sum_{T\in\mathcal{T}_h} \int_T\beta\nabla v\cdot\nabla v\dx\right)^{\frac12}.
%\end{equation}
%Then, obviously one has
%\begin{equation}\label{eq: coercivity}
%\kappa \|v\|_{1,h}^2\leq a_h(v,v), ~~~\forall v\in \cS_h^P(\O)\cup
%    \NChz,\quad \kappa = \min\{\beta^-,\beta^+\}.
%\end{equation}
For convenience in the analysis to follow, let
\[
\cbe_{jk} = \p T_j \cap \p T_k,\quad \cbe_j =
\p T_j\cap \p\O,\quad \forall\, T_j, T_k \in \Tau_h,
\]
and write
\[
v_j = v\at{T_j}\quad\forall\, T_j\in\Tau_h ;\quad
v_{jk} = v_j\at{\cbe_{jk}} \quad\forall \cbe_{jk}\in\cE^i.
\]
Set
%\[
%\Lam^h=\left\{\underset{\cbe_{jk}\in \cE^i}{\Pi}(\lam_{jk},\lam_{kj})\mid
%~ \lam_{jk} \in \cP_0(\cbe_{jk});\ \lam_{jk}+\lam_{kj}=0\right\}.
%\]
\begin{eqnarray*}
\Lam^h&=&\left\{\lam \mid \lam = (\lam_{jk},\lam_{kj})\in
(\cP_0(\cbe_{jk}))^2,\lam_{jk}+\lam_{kj}=0~~\forall \cbe_{jk}\in
\mathring{\mathcal{E}}_h;\right. \\
&&\qquad\qquad\left. \lam = \lam_j\in \cP_0(\cbe_{j})~~\forall \cbe_{j}\in \mathcal{E}_h^b \right\}.
\end{eqnarray*}
Denote by $\bnu_j$ the unit outward normal to $T_j.$
We will use the following projection operators introduced in \cite{1999DouglasSantosSheenYe}:
$\Pi_0: \underset{\cbe\in\cE}{\Pi}L^2(\cbe)\to \underset{\cbe\in\cE}\Pi \cP_0(\cbe)$ and
$\Pi_\bnu:\PHb^2(\O)\to\Lam^h$ by
\begin{eqnarray}
% \nonumber to remove numbering (before each equation)
  \left\la v - \Pi_0 v, 1\right\ra_\cbe &=& 0\quad \forall v\in
  L^2(\cbe)\quad \forall \cbe \in \cE,\label{eq: average value 1}\\
  \bigg\la \beta\pp{v_j}{\bnu_j}-\Pi_\bnu v,1\bigg\ra_{\cbe_{jk}} &=& 0 \quad\forall v\in\PHb^2(\O) \quad\forall \cbe_{jk} \in \cE,\label{eq: average value 2}
\end{eqnarray}
%\begin{subeqnarray}\slabel{eq: average value}
%\left\la v - \Pi_0 v, 1\right\ra_\cbe &=& 0, \quad \\
%\qquad\bigg\la \beta\pp{v_j}{\bnu_j}-\Pi_\bnu v,1\bigg\ra_{\cbe_{jk}} &=& 0
%
%\end{subeqnarray}
so that $\Pi_0^\cbe(v) := \Pi_0(v\at{\cbe}) = \frac1{|\cbe|} \int_\cbe v \ds$ is
the average of $v$ over $\cbe$ and
\[
\left(\Pi_\bnu v\at{\cbe_{jk}}, \Pi_\bnu v\at{\cbe_{kj}}\right) = \left( \Pi_0^{\gamma_{jk}} \beta\pp{v_j}{\bnu_j},
 \Pi_0^{\gamma_{kj}} \beta\pp{v_k}{\bnu_k}\right) \in \mathbb{R}^2
\quad\forall \cbe_{jk}\in \cE^i.
\]

\begin{lemma}\label{lem:fractional}
Let $\gamma = (0, h)$ with $\gamma^- = (0, \alpha)$ and $\gamma^+ = (\alpha, h)$. Assume
$u \in L^2(\gamma)$ and $u|_{\gamma^s}\in H^{\frac12}(\gamma^s),s = -, +.$
Then $u \in H^{\frac12-\eps}(\gamma)$ for every $\epsilon \in (0, \frac14)$, and
there exists a constant $C$ such that
\begin{eqnarray*}
\|u\|_{\frac12 - \epsilon,\gamma} \leq \frac{C}{\sqrt{\epsilon}}
\left(\|u\|_{\frac12,\gamma^-} + \|u\|_{\frac12, \gamma^+}\right).
\end{eqnarray*}
\end{lemma}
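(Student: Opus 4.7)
The plan is to work with the intrinsic Slobodeckij characterization
\[
\|u\|_{\frac{1}{2}-\eps,\gamma}^{2} \;=\; \|u\|_{L^{2}(\gamma)}^{2} \;+\; \int_{\gamma}\!\!\int_{\gamma}\frac{|u(x)-u(y)|^{2}}{|x-y|^{2-2\eps}}\,dx\,dy,
\]
and to split the double integral into three contributions coming from $\gamma^{-}\times\gamma^{-}$, $\gamma^{+}\times\gamma^{+}$, and the cross region $\gamma^{-}\times\gamma^{+}$ (doubled). The $L^{2}$ part is immediate, since $\|u\|_{L^{2}(\gamma)}^{2}=\|u\|_{L^{2}(\gamma^{-})}^{2}+\|u\|_{L^{2}(\gamma^{+})}^{2}$, and both pieces are already controlled by the corresponding $H^{1/2}$ norms.

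For each same-side piece I would use the elementary bound $|x-y|^{-(2-2\eps)} = |x-y|^{2\eps}\cdot|x-y|^{-2}\le h^{2\eps}|x-y|^{-2}$, yielding
\[
\int_{\gamma^{\pm}}\!\!\int_{\gamma^{\pm}}\frac{|u(x)-u(y)|^{2}}{|x-y|^{2-2\eps}}\,dx\,dy \;\le\; h^{2\eps}\,|u|_{\frac{1}{2},\gamma^{\pm}}^{2},
\]
which is bounded by $C\,|u|_{\frac{1}{2},\gamma^{\pm}}^{2}$ uniformly in $\eps\in(0,\tfrac14)$. Any $\eps$-dependent blow-up therefore has to come from the cross integral
\[
I_{\times} \;=\; \int_{0}^{\alpha}\!\!\int_{\alpha}^{h}\frac{|u(x)-u(y)|^{2}}{(y-x)^{2-2\eps}}\,dy\,dx.
\]

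The cross term I would handle by the pointwise bound $|u(x)-u(y)|^{2}\le 2|u(x)|^{2}+2|u(y)|^{2}$ and then integrating out one variable at a time; since $y-x\ge\alpha-x$ for $(x,y)\in\gamma^{-}\times\gamma^{+}$, the inner integration in $y$ gives
\[
\int_{\alpha}^{h}(y-x)^{-(2-2\eps)}\,dy \;\le\; \frac{(\alpha-x)^{2\eps-1}}{1-2\eps},
\]
and symmetrically for the $x$-integration. This reduces the problem to the two weighted estimates
\[
\int_{0}^{\alpha}\frac{|u(x)|^{2}}{(\alpha-x)^{1-2\eps}}\,dx, \qquad \int_{\alpha}^{h}\frac{|u(y)|^{2}}{(y-\alpha)^{1-2\eps}}\,dy,
\]
each of which must be bounded by $(C/\eps)\,\|u\|_{\frac{1}{2},\gamma^{\pm}}^{2}$, so that a final square root produces the claimed $1/\sqrt{\eps}$ constant.

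The main obstacle is this fractional Hardy-type inequality: for $v\in H^{\frac12}(0,a)$,
\[
\int_{0}^{a}\frac{|v(x)|^{2}}{(a-x)^{1-2\eps}}\,dx \;\le\; \frac{C}{\eps}\,\|v\|_{\frac{1}{2},(0,a)}^{2}.
\]
The model case $v\equiv 1$ already gives the integral $a^{2\eps}/(2\eps)$, which makes the $1/\eps$ factor unavoidable and indicates that the constant must absorb a geometric factor coming from $a$. A short proof proceeds by splitting $(0,a)$ into a far-away region $(0,a/2)$, on which the weight is bounded and the estimate by $\|v\|_{L^{2}}^{2}$ is trivial, and a near-singularity region $(a/2,a)$, on which one invokes the sharp one-dimensional Sobolev embedding $H^{1/2}(I)\hookrightarrow L^{p}(I)$ together with H\"older's inequality against the singular weight, choosing the conjugate exponent $p'$ strictly less than $1/(1-2\eps)$ so that the weight lies in $L^{p'}$; the $1/\eps$ arises from the integral of the weight. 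Combining this Hardy bound with the earlier easy estimates and the decomposition of $|u|_{\frac{1}{2}-\eps,\gamma}^{2}$ then produces the claimed inequality; the resulting constant $C$ depends on $h$ and on $\min(\alpha,h-\alpha)$, which is the expected geometric dependence and is harmless in the intended IFE application.
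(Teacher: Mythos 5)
Your argument follows essentially the same route as the paper's: both start from the Slobodeckij characterization of $\|u\|_{\frac12-\eps,\gamma}$, dispose of the two same-side double integrals trivially (the kernel exponent $2-2\eps$ sits below the $H^{\frac12}$ exponent $2$), and isolate the cross term over $\gamma^-\times\gamma^+$, where the $1/\eps$ comes from integrating the nearly non-integrable power $|x-y|^{-(2-2\eps)}$. The only real difference is bookkeeping. You integrate the kernel in one variable first and package the key step as a one-dimensional fractional Hardy inequality $\int_0^a |v(x)|^2(a-x)^{2\eps-1}\,dx\le C\eps^{-1}\|v\|_{\frac12,(0,a)}^2$, proved by H\"older against the singular weight plus the embedding $H^{\frac12}(I)\hookrightarrow L^p(I)$; the paper instead applies H\"older directly to the double integral with the near-critical exponent $q=\frac{2-\eps}{2(1-\eps)}$, computes the resulting kernel integral $I(\gamma^-,\gamma^+,\sigma,q)\le C/\eps$ explicitly, and then invokes the same embedding. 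Both versions send $p\to\infty$ as $\eps\to0$ and neither tracks how the embedding constant depends on $p$ or on the lengths $|\gamma^\pm|$, so your sketch is at the same level of completeness as the paper's proof. One caveat: you describe the dependence of $C$ on $\min(\alpha,h-\alpha)$ as harmless for the intended application, but here the lemma is applied on interface edges where the intersection point $\alpha$ can be arbitrarily close to an endpoint, and the downstream energy and $L^2$ estimates need constants uniform in the interface location; testing $u=\chi_{\gamma^+}$ shows the cross term is of order $1/\eps$ while the right-hand side is of order $(h-\alpha)/\eps$, so some such dependence is genuinely unavoidable in this form of the bound, and the paper's own proof merely hides it inside the unquantified Sobolev embedding constant. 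Your explicit acknowledgment of that dependence is, if anything, the more honest account, but ``harmless'' is the wrong word for it in this context.
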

\begin{proof}
For every $\epsilon \in (0, \frac14)$, let $\sigma = \frac12 - \epsilon$. Then $\sigma \in (\frac14, \frac12)$. For $q \geq 1$ and $y \in \gamma^-$,
\begin{eqnarray*}
\int_{\gamma^+}\frac{1}{|x-y|^{(1+2\sigma)q}} dx &=&
\frac{(h-y)^{1-(1+2\sigma)q} - (\alpha-y)^{1-(1+2\sigma)q}}{1-(1+2\sigma)q}.
%\\
%&=&\frac{1}{1-(1+2\sigma)q}\left((h-y)^{1-(1+2\sigma)q} - (\alpha-y)^{1-(1+2\sigma)q}\right).
\end{eqnarray*}
Since $\sigma \in (\frac14, \frac12)$, we specifically choose
\begin{eqnarray*}
q = \frac{1}{2}\left(1 + \frac{2}{1+2\sigma}\right)~~~\Longrightarrow ~~~q = \frac{2-\epsilon}{2(1-\epsilon)}.
\end{eqnarray*}
Then $1 \leq q < \frac{2}{1+2\sigma}$, $1-(1+2\sigma)q \not = -1$, and $1+2\sigma \leq (1+2\sigma)q < 2$.
Hence,
\begin{eqnarray*}
&&I(\gamma^-,\gamma^+,\sigma,q):=\int_{\gamma^-}\int_{\gamma^+}\frac{1}{|x-y|^{(1+2\sigma)q}}dx dy \\
%&=&\frac{1}{1-(1+2\sigma)q}\left(\int_{\gamma^-}(h-y)^{1-(1+2\sigma)q} dy - \int_{\gamma^-}(\alpha-y)^{1-(1+2\sigma)q}dy \right)   \\
&=&\left(\frac{1}{1-(1+2\sigma)q}\right) \left(\frac{1}{2-(1+2\sigma)q}\right)\left(h^{2-(1+2\sigma)q} - (h-\alpha)^{2-(1+2\sigma)q}
- \alpha^{2-(1+2\sigma)q}\right) \\
&\leq& C \left|\frac{1}{1-(1+2\sigma)q}\right| \left|\frac{1}{2-(1+2\sigma)q}\right| = C \left|\frac{1}{1-\epsilon}\right| \left|\frac{1}{\epsilon}\right| \leq \frac{C}{\epsilon}.
\end{eqnarray*}
Therefore, using $p$ such that $\frac{1}{p} + \frac{1}{q} = 1$, and the above estimate, we have
\begin{eqnarray*}
&&\int_{\gamma^-}\int_{\gamma^+} \frac{\left| u(x) -
  u(y)\right|^2}{|x-y|^{1+2\sigma} } dxdy \\
%&\leq& \int_{\gamma^-}\left[\int_{\gamma^+}
%\left| u(x) -  u(y)\right|^{2p}\dx\right]^{\frac1{p}}
%\left[\int_{\gamma^+}
%\frac1{|x-y|^{(1+2\sigma)q}}\dx\right]^{\frac1{q}}\dy \\
&\leq& \int_{\gamma^-}\left(\left[\int_{\gamma^+}
\left| u(x) -  u(y)\right|^{2p}dx\right]^{\frac1{2p}}\right)^2
\left[\int_{\gamma^+}
\frac1{|x-y|^{(1+2\sigma)q}}dx\right]^{\frac1{q}}dy \\
%&\le& \int_{\gamma^-}\left(\|u\|_{0,2p,\gamma^+} + |u(y)|\right)^2
%\left[\int_{\gamma^+}\frac1{|x-y|^{(1+2\sigma)q}}dx\right]^{\frac1{q}}dy \\
&\le& 2\int_{\gamma^-}\|u\|_{0,2p,\gamma^+}^2 \left[\int_{\gamma^+}\frac1{|x-y|^{(1+2\sigma)q}}dx\right]^{\frac1{q}}dy
+ 2 \int_{\gamma^-}|u(y)|^2\left[\int_{\gamma^+}\frac1{|x-y|^{(1+2\sigma)q}}\dx\right]^{\frac1{q}}dy.\\
&\leq&2\|u\|_{0,2p,\gamma^+}^2 \left(\int_{\gamma^-} 1^p dy \right)^{\frac1p} I(\gamma^-,\gamma^+,\sigma,q)^{\frac1q} + 2\left(\int_{\gamma^-}|u(y)|^{2p} dy\right)^{\frac1p}
I(\gamma^-,\gamma^+,\sigma,q)^{\frac1q}\\
&\le& C\|u\|_{0,2p,\gamma^+}^2 \left(\frac{1}{\epsilon}\right)^{\frac1q} + C\|u\|_{0,2p,\gamma^-}^2 \left(\frac{1}{\epsilon}\right)^{\frac1q}\\
&\le& C\left(\frac{1}{\epsilon}\right)^{\frac{2(1-\epsilon)}{2-\epsilon}} \left(\|u\|_{\frac12,\gamma^-}^2 + \|u\|_{\frac12, \gamma^+}^2\right).
\end{eqnarray*}
In the last step, we used the Sobolev embedding theorem for one dimension:
\begin{equation*}
  W^{\frac12,2}(\gamma^s)\hookrightarrow W^{0,p}(\overline{\gamma^s}),~~~s = -,+,~~~p\in [1,\infty).
\end{equation*}
By definition of the fractional Sobolev norm, we have
\begin{eqnarray*}
&&\|u\|_{\sigma,\gamma}^2 = \|u\|_{0,\gamma}^2+ \int_\gamma\int_\gamma \frac{\left| u(x) - u(y)\right|^2}{|x-y|^{1+2\sigma} } dxdy\\
&=& \|u\|_{0,\gamma}^2+   \int_{\gamma^-}\int_{\gamma^-} \frac{\left| u(x) -
  u(y)\right|^2}{|x-y|^{1+2\sigma} } dxdy
+2\int_{\gamma^-}\int_{\gamma^+} \frac{\left| u(x) -
  u(y)\right|^2}{|x-y|^{1+2\sigma} } dxdy  \\
&& ~~~~ +\int_{\gamma^+}\int_{\gamma^+} \frac{\left| u(x) -
  u(y)\right|^2}{|x-y|^{1+2\sigma} } dxdy \\
&\leq& \|u\|_{\frac12,\gamma^-}^2 + \|u\|_{\frac12, \gamma^+}^2 + C\left(\frac{1}{\epsilon}\right)^{\frac{2(1-\epsilon)}{2-\epsilon}} \left(\|u\|_{\frac12,\gamma^-}^2 + \|u\|_{\frac12, \gamma^+}^2\right),
%2\int_{\gamma^-}\int_{\gamma^+} \frac{\left| u(x) - u(y)\right|^2}{|x-y|^{1+2\sigma} } \dx\dy.
\end{eqnarray*}
which leads to
\begin{eqnarray*}
\|u\|_{\sigma,\gamma} \leq \frac{C}{\epsilon^{(1-\epsilon)/(2-\epsilon)}}\left(\|u\|_{\frac12,\gamma^-} + \|u\|_{\frac12, \gamma^+}\right) \leq \frac{C}{\sqrt{\epsilon}}
\left(\|u\|_{\frac12,\gamma^-} + \|u\|_{\frac12, \gamma^+}\right)
\end{eqnarray*}
because for small $\epsilon$, we have
\begin{equation*}
\frac{1-\epsilon}{2-\epsilon} = \frac{1}{2} - \frac{\epsilon}{4} - \frac{\epsilon^2}{8} - \frac{\epsilon^3}{16} - \cdots \leq \frac{1}{2}.
\end{equation*}
\end{proof}

\begin{theorem} \label{th:flux_edge_approx_bnd}
Let $T \in \mathcal{T}_h$ and let $\gamma$ be an edge of $T$. Then there exists a constant $C>0$ such that the following hold on a mesh
$\mathcal{T}_h$ with a sufficiently small mesh size:
\begin{enumerate}
\item
If $T \in \mathcal{T}_h^n$ and $v \in H^2(T)+ S_h^n(T)$, then
\begin{eqnarray*}
\|\beta\pp{v}{\bnu}-\Pi_\bnu v\|_{0, \gamma} \leq Ch^{\frac12}\|v\|_{H^2(T)}.
\end{eqnarray*}

\item
If $\gamma \in \cE^n$ but $T \in \mathcal{T}_h^i$, and $v \in \PHb^2(T) + S_h^i(T)$, then
\begin{eqnarray*}
\|\beta\pp{v}{\bnu}-\Pi_\bnu v\|_{0, \gamma} \leq Ch^{\frac12}\left(\|v\|_{H^2(\tT^-)} + \|v\|_{H^2(\tT^+)}\right).
\end{eqnarray*}

\item
If $\gamma \in \cE^i$ and $v \in \PHb^2(T) + S_h^i(T)$, then
\begin{eqnarray*}
\|\beta\pp{v}{\bnu}-\Pi_\bnu v\|_{0, \gamma} \leq C h^{\frac12} |\log h|^{\frac12}
 \left(\|v\|_{H^2(\tT^-)} + \|v\|_{H^2(\tT^+)}\right).
\end{eqnarray*}
\end{enumerate}
Here, for $T \in \mathcal{T}_h^i$, designate
\begin{eqnarray*}
\tT^s = \begin{cases}
T \cap \Omega^s &\text{~for~} v\in \PHb^2(T), \\
T^s & \text{~for~} v\in S_h(T),
\end{cases}
\qquad\text{ for }s = \pm.
\end{eqnarray*}
\end{theorem}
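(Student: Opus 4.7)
The plan is to reduce each of the three bounds to the standard one-dimensional $L^2$-projection estimate
\begin{equation*}
\|w-\Pi_0^\cbe w\|_{0,\cbe}\leq C h^{s}\,\|w\|_{s,\cbe},\qquad 0<s\leq 1,
\end{equation*}
applied with $w:=\beta\,\partial v/\partial\bnu\at{\cbe}$, after observing that by the defining relation \eqref{eq: average value 2} the quantity $\Pi_\bnu v\at{\cbe}$ is precisely the constant $L^2$-projection $\Pi_0^\cbe(\beta\,\partial v/\partial\bnu)$, so that $\|\beta\,\partial v/\partial\bnu-\Pi_\bnu v\|_{0,\cbe}=\|w-\Pi_0^\cbe w\|_{0,\cbe}$. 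The only task is then to bound $w$ in an appropriate fractional Sobolev space on $\cbe$, with the regularity exponent $s$ taken as large as the geometry allows.

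For Case 1, $\beta$ is a single constant on $T$, and the trace theorem yields $w\in H^{1/2}(\cbe)$ with $\|w\|_{1/2,\cbe}\leq C\|v\|_{H^2(T)}$ when $v\in H^2(T)$, while for $v\in\cS_h^n(T)$ the bound is trivial since $w$ is a polynomial; taking $s=1/2$ produces the $h^{1/2}$ estimate. For Case 2, the hypotheses (H1)-(H2) force the entire edge $\cbe$ to sit inside a single sub-region, say $\cbe\subset\tT^+$, so $w$ depends only on $v\at{\tT^+}$, and the argument of Case 1 carries over with the trace inequality taken on $\tT^+$ (the norm over $\tT^-$ being added harmlessly to the right-hand side). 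When $v$ is an IFE piece, the standard trace theorem is replaced by Theorems \ref{theorem: trace inequality} and \ref{theorem: inverse inequality}, whose constants are independent of interface location.

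Case 3 is the main obstacle. Here $\cbe$ is split by the interface crossing as $\cbe=\cbe^-\cup\cbe^+$, and because $\beta^+\neq\beta^-$ the function $w$ generically jumps at the crossing point, so $w\notin H^{1/2}(\cbe)$ and the choice $s=1/2$ is unavailable. The remedy is Lemma \ref{lem:fractional}, which, for every $\epsilon\in(0,1/4)$, gives
\begin{equation*}
\|w\|_{\frac12-\epsilon,\cbe}\leq\frac{C}{\sqrt{\epsilon}}\bigl(\|w\|_{\frac12,\cbe^-}+\|w\|_{\frac12,\cbe^+}\bigr).
\end{equation*}
Each piece $\|w\|_{1/2,\cbe^\pm}$ is controlled by $\|v\|_{H^2(\tT^\pm)}$ via the trace theorem on the (Lipschitz, by the $C^2$-regularity of $\G$ and (H1)-(H2) for small $h$) sub-region $\tT^\pm$, or, in the IFE case, by Theorems \ref{theorem: trace inequality} and \ref{theorem: inverse inequality}. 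Substituting into the projection estimate with $s=1/2-\epsilon$ produces
\begin{equation*}
\|w-\Pi_0^\cbe w\|_{0,\cbe}\leq\frac{C\,h^{1/2-\epsilon}}{\sqrt{\epsilon}}\bigl(\|v\|_{H^2(\tT^-)}+\|v\|_{H^2(\tT^+)}\bigr).
\end{equation*}

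The final step is to optimize the two competing factors $h^{-\epsilon}$ and $\epsilon^{-1/2}$: the choice $\epsilon=1/|\log h|$, admissible for $h$ sufficiently small, renders $h^{-\epsilon}=e$ bounded and $\epsilon^{-1/2}=|\log h|^{1/2}$, yielding exactly the claimed $h^{1/2}|\log h|^{1/2}$ bound. The only subtlety, which must be verified throughout, is that every trace and inverse constant on the two sub-regions is uniform in the position of the interface cut -- a property which is built into (H1)-(H2) together with the basic IFE estimates recorded in Section 2.
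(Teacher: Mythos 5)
Your proposal follows essentially the same route as the paper's own proof: identify $\Pi_\bnu v$ on each edge as the constant $L^2$-projection of $\beta\,\partial v/\partial\bnu$, apply the fractional-order projection error estimate with exponent $\tfrac12$ on non-interface edges and $\tfrac12-\eps$ on interface edges, control the fractional norm via the trace theorem on the relevant sub-region(s) and via Lemma \ref{lem:fractional} in the split-edge case, and finally choose $\eps\sim 1/|\log h|$ to convert $h^{-\eps}/\sqrt{\eps}$ into $C|\log h|^{1/2}$. The argument is correct, and your explicit remarks on the IFE-function case and on the uniformity of the trace constants in the interface location are points the paper leaves implicit.
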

\begin{proof}
Let $\gamma \in \cE$. In the first two cases we assume $\gamma \in \cE^n$, but for the third case we assume
$\gamma \in \cE^i$. Then by the standard trace theorem or the lemma above, we have
$\beta\pp{v}{\bnu} \in H^{\frac12}(\gamma)$ or $\beta\pp{v}{\bnu} \in
H^{\frac12-\eps}(\gamma)$ for any $\eps \in (0, \frac14)$.

Since $\Pi_\bnu v$ is the $L^2$ projection of $\beta\pp{v}{\bnu}$ to the space of constant polynomials, applying the
error estimate for polynomial projection and the standard error estimate on interpolation of Sobolev spaces (see \cite[Theorem 1.4, p.6]{1986GiraultRaviart}),
we have
\begin{eqnarray} \label{eq:trace_interp_1}
\left\|\beta\pp{v}{\bnu}-\Pi_\bnu v\right\|_{0, \gamma} &\leq&
\begin{cases}
Ch^{\frac12}\left\|\beta\pp{v}{\bnu}\right\|_{\frac12, \gamma} & \text{if~} \gamma \in \cE^n, \\ \\
Ch^{\frac12-\eps}\left\|\beta\pp{v}{\bnu}\right\|_{\frac12-\eps, \gamma} & \text{if~} \gamma \in \cE^i.
\end{cases}
\end{eqnarray}
%Depending on either $T \in \mathcal{T}_h^n$ or $T \in \mathcal{T}_h^i$, we assume
%$v \in H^2(T)\cup \cS_h(T)$ or $v \in \PHb^2(T) \cup \cS_h(T)$ which leads to $\beta\pp{v}{\bnu} \in H^{\frac12}(\gamma)$.
For the first two cases, by the definition of $\|\cdot\|_{\frac12, \gamma}$, we have
\begin{eqnarray*}
\left\|\beta\pp{v}{\bnu}\right\|_{\frac12, \gamma} &\leq& \begin{cases}
\left\| \beta\pp{v}{\bnu} \right\|_{1, T} & \text{if~} T \in \mathcal{T}_h^n, \\ \\
\left\|\beta\pp{v}{\bnu}\right\|_{1, \tT^s} \leq \left\|\beta\pp{v}{\bnu}\right\|_{1, \tT^-} + \left\|\beta\pp{v}{\bnu}\right\|_{1, \tT^+} & \text{if~} T \in \mathcal{T}_h^i,
\end{cases}
\end{eqnarray*}
which means
\begin{eqnarray} \label{eq:trace_interp_2}
\left\|\beta\pp{v}{\bnu}\right\|_{\frac12, \gamma} &\leq& \begin{cases}
\max\{\beta^+,\beta^-\}\left\|v\right\|_{2, T} & \text{if~} T \in \mathcal{T}_h^n, \\ \\
\max\{\beta^+,\beta^-\}\left\|v\right\|_{\PHb^2(T)} & \text{if~} T \in \mathcal{T}_h^i.
\end{cases}
\end{eqnarray}
For the third case, applying Lemma \ref{lem:fractional}, we have
\begin{eqnarray}\label{eq:trace_interp_3}
\left\|\beta\pp{v}{\bnu}\right\|_{\frac12-\eps, \gamma} &\leq& \frac{C}{\sqrt{\eps}}
\left(\|\beta\pp{v}{\bnu}\|_{\frac12,\gamma^-} + \|\beta\pp{v}{\bnu}\|_{\frac12, \gamma^+}\right)  \nonumber \\
%\leq \frac{C}{\cor{\eps}}
%\left(\|\beta\pp{v}{\bnu}\|_{1,\tT^-} + \|\beta\pp{v}{\bnu}\|_{1,
%\tT^+}\right)
&\leq& \frac{C}{\sqrt{\eps}}
\left(\|v\|_{2,\tT^-} + \|v\|_{2, \tT^+}\right).
\end{eqnarray}
Finally, all the estimates in this theorem follow by applying \eqref{eq:trace_interp_2} and \eqref{eq:trace_interp_3}
to \eqref{eq:trace_interp_1}, and by taking the minimum of
$\frac{1}{h^{\eps}\sqrt{\eps}}$ over $0<\eps< 1/4.$ Indeed,
at $\eps=\frac1{2\log\frac1{h}}$, for $0 < h < \frac{1}{e^2}$, the minimum value is
\begin{equation*}
  \frac{1}{h^{\eps}\sqrt{\eps}} = h^{\frac1{2\log h}}\sqrt{{2\log\frac1{h}}} = \sqrt{2e}|\log h|^\frac12.
\end{equation*}
\end{proof}

\subsection{The Energy-Norm Error Estimate}\label{sec3}
Define the (broken) energy norm
\[
\tbar u\tbar=\sqrt{a_h(u,u)}.
\]
As needed, we quote the following second Strang lemma for the IFE solution:
\begin{lemma}\label{lem3.1} Let
$u\in \tHi^1(\O)$ and $u_h\in S_h(\O)$ be the solutions of
\eqref{eq:weakform} and \eqref{eq: galerkin weak form}, respectively.
Then,
\begin{eqnarray}\label{eq:strang}
\tbar u-u_h\tbar\leq C\left\{\inf_{v_h\in S_h(\O)}\tbar u-v_h\tbar+\sup_{w_h\in S_h(\O)}
\frac{|a_h(u,w_h)-L(w_h)|}{\tbar w_h\tbar}\right\}.
\end{eqnarray}
\end{lemma}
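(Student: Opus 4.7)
The plan is to follow the standard proof of the second Strang lemma, adapted to the nonconforming setting. The only nontrivial ingredient is that $a_h(\cdot,\cdot)$ is already the square of the broken energy norm $\tbar\cdot\tbar$, so coercivity on $\NCh$ holds trivially with constant $1$. Continuity on $H^1(\Omega)+\NCh$ also follows immediately from Cauchy–Schwarz applied elementwise.

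First I would fix an arbitrary $v_h\in\NCh$ satisfying the same discrete boundary condition as $u_h^I$ (for example, take $v_h=\cI_h^I u$ restricted via Theorem~\ref{th:I_h^I_bound}, which matches the data $g$ edge-by-edge in the integral sense), so that the difference $w_h := v_h - u_h^I$ lies in the test space $\mathring{\NCh}$. By the triangle inequality,
\[
\tbar u - u_h^I\tbar \;\leq\; \tbar u - v_h\tbar + \tbar v_h - u_h^I\tbar.
\]

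Next I would estimate the discrete error $\tbar w_h\tbar$. Using the identity $\tbar w_h\tbar^2 = a_h(w_h,w_h)$, inserting $\pm\, u$ in the first argument, and invoking the Galerkin equation \eqref{eq: galerkin weak form} (valid because $w_h\in\mathring{\NCh}$), one gets
\[
\tbar w_h\tbar^{2} \;=\; a_h(v_h-u,w_h) + \bigl[a_h(u,w_h) - L(w_h)\bigr].
\]
The first term is bounded by $\tbar v_h-u\tbar\,\tbar w_h\tbar$ via the elementwise Cauchy–Schwarz inequality, and the second term is, by definition, bounded by $\tbar w_h\tbar$ times the consistency supremum appearing on the right-hand side of \eqref{eq:strang}. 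Dividing through by $\tbar w_h\tbar$ yields
\[
\tbar v_h-u_h^I\tbar \;\leq\; \tbar u - v_h\tbar \;+\; \sup_{0\neq \hat w_h\in\NCh}\frac{|a_h(u,\hat w_h)-L(\hat w_h)|}{\tbar \hat w_h\tbar}.
\]

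Combining this with the triangle inequality and taking the infimum over admissible $v_h$ gives the inequality with $C=2$. The only subtlety, and the one step that deserves a line of care in the write-up, is verifying that the candidate $v_h$ chosen to produce $v_h-u_h^I\in\mathring{\NCh}$ can in fact be chosen within any minimizing sequence for the first term on the right of \eqref{eq:strang}; this holds because enforcing the $\int_\gamma v_h\,ds=\int_\gamma g\,ds$ condition on boundary edges affects only boundary degrees of freedom, so the infimum over boundary-matching $v_h$ coincides, up to a harmless constant, with the unconstrained infimum over $\NCh$.
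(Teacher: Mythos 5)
Your argument is the standard proof of the second Strang lemma, and it is correct in substance; the paper itself merely \emph{quotes} Lemma~\ref{lem3.1} without proof, so there is no in-paper argument to compare against, and yours is exactly the expected one (triangle inequality, coercivity of $a_h$ with constant $1$ in the broken norm, Galerkin orthogonality for $w_h=v_h-u_h^I\in\NChz$, elementwise Cauchy--Schwarz). Two points are worth tightening in a final write-up. First, dividing by $\tbar w_h\tbar$ requires that $\tbar\cdot\tbar$ be a genuine norm, not just a seminorm, on the test space $\NChz$; this holds because a function in $\NChz$ with vanishing broken gradient is piecewise constant, and the matching of edge means across interior edges together with zero means on boundary edges forces it to vanish, but it deserves a sentence. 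Second, your argument literally yields the bound with the infimum taken over the affine subset of $v_h\in\NCh$ satisfying the discrete boundary condition \eqref{eq: bc}, which dominates the unconstrained infimum written in \eqref{eq:strang}; since the lemma is only ever invoked downstream with $v_h=\cI_h^I u$, which does satisfy $\int_\gamma \cI_h^I u\,\mathrm{d}s=\int_\gamma g\,\mathrm{d}s$ on boundary edges, the discrepancy is harmless in practice, but your closing claim that the two infima are comparable up to a constant is the one genuinely unproved step: correcting a boundary degree of freedom changes the energy norm by an amount controlled only through a trace inequality involving the $L^2$ norm of $u-v_h$, which the broken energy seminorm alone does not dominate. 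Either restrict the infimum to boundary-matching $v_h$ in the statement, or assume $g=0$, as is standard.
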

%We then derive an error estimate in the energy norm for the IFE solution in the following theorem.
%\begin{theorem}\label{thmh1} Let
%$u\in \PHb^2(\O)$ and $u_h^I\in\NCh$ be the solutions of
%\eqref{eq:weakform} and \eqref{eq: galerkin weak form}, respectively.
%Then, there exists a constant $C$ such that
%the energy-norm error of the IFE solution satisfies the following
%estimate:
%% for every $\eps \in (0, 1/4)$:
%\begin{equation}\label{eq: energy error estimate}
%\tbar u-u_h^I\tbar\leq C h\left[ \|u\|_{\PHb^2(\O)} +
% |\log h|^\frac12\sum_{T \in \mathcal{T}_h^i}\|u\|_{\PHb^2(T)}\right].
%\end{equation}
%%where $C>0$ is independent of $u$ and $\eps>0$ is an arbitrary small
%%constant.
%\end{theorem}

We are now ready to state and derive an error estimate in the energy norm.

\begin{theorem}\label{thmh1} Let
$u\in \PHb^2(\O)$ and $u_h\in S_h(\O)$ be the solutions of
\eqref{eq:weakform} and \eqref{eq: galerkin weak form}, respectively.
Then, there exists a constant $C$ such that
%the energy-norm error of the IFE solution satisfies the following
%estimate:
% for every $\eps \in (0, 1/4)$:
\begin{equation}\label{eq: energy error estimate}
\tbar u-u_h\tbar\leq C h\left[ \|u\|_{\PHb^2(\O)} +
 |\log h|^\frac12\sum_{T \in \mathcal{T}_h^i}\|u\|_{\PHb^2(T)}\right].
\end{equation}
%where $C>0$ is independent of $u$ and $\eps>0$ is an arbitrary small
%constant.
If, in addition, $u\in \widetilde W^{2,q}_\beta(\Omega)$ for some
$q>2,$ then there exists $h_0>0$ such that, for all $0<h<h_0,$
\begin{equation}\label{eq: energy error estimate 2}
\tbar u-u_h\tbar\leq C h\Big( \|u\|_{\PHb^2(\O)} +
 \sum_{T \in \mathcal{T}_h^i}\|u\|_{\tilde W^{2,q}_\beta(T)}\Big).
\end{equation}
\end{theorem}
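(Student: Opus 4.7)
The plan is to apply Lemma~\ref{lem3.1} and bound its two right-hand terms separately. For the best-approximation term I take $v_h=\cI_h^I u$ and invoke Theorem~\ref{th:I_h^I_bound}; the energy seminorm is controlled by $\sqrt{\max\{\beta^+,\beta^-\}}$ times the broken $H^1$ seminorm, so this produces $\inf_{v_h\in\NCh}\tbar u-v_h\tbar\le Ch\|u\|_{\PHb^2(\O)}$ with no logarithmic loss.

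The substantive work lies in the consistency error. I start by integrating by parts elementwise: on an interface element $T$, I split the integral into $T^+$ and $T^-$ and integrate by parts separately, noting that the two contributions on $\G\cap T$ cancel because $u$ satisfies the flux jump condition \eqref{eq: elliptic flux jump} and $w_h$ is continuous across $\G$ (Lemma~\ref{lemma: nonconforming IFE midpoint basis continuity}). Combined with $-\nabla\cdot(\beta\nabla u)=f$ on each subdomain and the continuity of $\beta\p u/\p\bnu$ across every element edge, this reduces the residual to a sum of edge integrals
\begin{equation*}
a_h(u,w_h)-L(w_h) \;=\; \sum_{\cbe\in\cE}\int_\cbe \beta\,\tfrac{\p u}{\p\bnu}\,[w_h]_\cbe\,\ds,
\end{equation*}
where $[w_h]_\cbe$ denotes the usual jump on interior edges and $w_h$ itself on boundary edges. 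Since the defining integral-mean condition of $\NCh$ together with the boundary condition on the test space forces $\int_\cbe[w_h]_\cbe\,\ds=0$ for every $\cbe\in\cE$, I may subtract the piecewise-constant projection $\Pi_\bnu u$ from $\beta\p u/\p\bnu$ without changing the integral, and then apply Cauchy--Schwarz edgewise.

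Theorem~\ref{th:flux_edge_approx_bnd} then bounds the flux-projection factor by $Ch^{1/2}\|u\|_{\PHb^2(T)}$ on non-interface edges and by $Ch^{1/2}|\log h|^{1/2}$ times the same quantity on interface edges. The jump factor is handled by combining $\Pi_0(w_h|_{T_j})=\Pi_0(w_h|_{T_k})$ on every $\cbe_{jk}$ with a standard Poincar\'e/trace argument on each side, consistent with Theorem~\ref{theorem: trace inequality}, yielding $\|[w_h]_\cbe\|_{0,\cbe}\le Ch^{1/2}(|w_h|_{1,T_j}+|w_h|_{1,T_k})$. Summing over non-interface edges followed by a discrete Cauchy--Schwarz contributes $Ch\|u\|_{\PHb^2(\O)}\tbar w_h\tbar$; summing over the $O(h^{-1})$ interface edges via Cauchy--Schwarz over $\cE^i$ and then using the elementary bound $\bigl(\sum_{T\in\mathcal T_h^i}\|u\|_{\PHb^2(T)}^2\bigr)^{1/2}\le\sum_{T\in\mathcal T_h^i}\|u\|_{\PHb^2(T)}$ produces the $\ell^1$ interface contribution of \eqref{eq: energy error estimate}. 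Dividing by $\tbar w_h\tbar$ and combining with the approximation bound closes the argument.

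The principal obstacle is already absorbed into Theorem~\ref{th:flux_edge_approx_bnd}: the full flux $\beta\p u/\p\bnu$ on an interface edge belongs only to $H^{1/2-\eps}(\cbe)$, and Lemma~\ref{lem:fractional} supplies a constant that degenerates like $1/\sqrt\eps$; balancing $h^\eps/\sqrt\eps$ gives the $|\log h|^{1/2}$ factor via the choice $\eps=(2\log(1/h))^{-1}$. To prove \eqref{eq: energy error estimate 2} the strategy is to bypass that fractional-exponent loss entirely: under $u\in\tW^{2,q}_\beta(\O)$ with $q>2$, Sobolev embedding makes $\beta\p u/\p\bnu$ H\"older continuous on each side of $\G$, so the constant-projection error on $\cbe\in\cE^i$ is controlled by an $L^q$--Poincar\'e estimate on each piece $\cbe\cap\O^s$ without any $\eps$-optimization. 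This removes the logarithmic factor and yields the clean $O(h)$ bound, provided $h<h_0$ for some threshold $h_0$ depending on $q$.
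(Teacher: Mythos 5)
Your treatment of the first estimate \eqref{eq: energy error estimate} is correct and is essentially the paper's own argument: the same Strang decomposition, the interpolant $\cI_h^I u$ for the best-approximation term, elementwise integration by parts (with the cancellation on $\G\cap T$ that the paper leaves implicit), insertion of $\Pi_\bnu u$ using the vanishing edge means of the jumps of $w_h$, Cauchy--Schwarz, Theorem~\ref{th:flux_edge_approx_bnd} for the flux factor, and a trace/Poincar\'e bound for the test-function factor. Your per-edge bookkeeping with $[w_h]_\cbe$ versus the paper's per-element-boundary form with $w_h-m_j$ in \eqref{eq:a_h-L} is an immaterial reorganization, and your $\ell^2$/$\ell^1$ accounting of non-interface versus interface contributions matches \eqref{eq13}.

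Your proposed route to \eqref{eq: energy error estimate 2}, however, has a genuine gap. The paper deduces \eqref{eq: energy error estimate 2} from \eqref{eq: energy error estimate} in one step: H\"older's inequality gives $\|u\|_{\PHb^2(T)}\le C|T|^{\frac{1}{2p}}\|u\|_{\tW^{2,q}_\beta(T)}=Ch^{1/p}\|u\|_{\tW^{2,q}_\beta(T)}$ with $\frac1p+\frac2q=1$, and since $|\log h|^{\frac12}h^{1/p}\to 0$ the logarithm is absorbed into the constant for $0<h<h_0$. You instead try to eliminate the logarithm at its source, claiming that side-wise H\"older continuity of the flux lets the constant-projection error on an interface edge be ``controlled by an $L^q$--Poincar\'e estimate on each piece.'' That step fails. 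The logarithm in Theorem~\ref{th:flux_edge_approx_bnd}(3) does not arise from lack of smoothness of $u$ on each side of $\G$; it arises because the edge-normal flux $\beta\pp{u}{\bnu}$ (normal to $\cbe$, not to $\G$) genuinely jumps at the crossing point $\G\cap\cbe$, so its restriction to $\cbe$ lies in $H^{\frac12-\eps}(\cbe)$ but not in $H^{\frac12}(\cbe)$, no matter how smooth $u$ is on either side. Since $\Pi_\bnu u$ is a single constant over the whole edge, its $L^2(\cbe)$ distance to a function with an $O(1)$ jump is bounded below by the jump size times $\sqrt{|\cbe^-||\cbe^+|/|\cbe|}\sim h^{\frac12}$, whereas the edgewise bound your argument needs, $Ch^{\frac12}\|u\|_{\tW^{2,q}_\beta(T)}$, scales like $h^{\frac12+\frac2q}$ when $|\nabla u|\sim\mathrm{const}$ on $T$; a piecewise Poincar\'e estimate cannot see the cross-interface jump at all. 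So the clean $O(h^{\frac12})$ interface-edge estimate without the logarithm is false as stated, and your argument for \eqref{eq: energy error estimate 2} does not close. The correct repair is the paper's one-line H\"older/absorption argument applied after \eqref{eq: energy error estimate} has been established.
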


\begin{proof}
We need to estimate those terms bounding $\tbar u-u_h\tbar$ in \eqref{eq:strang} of the Strang lemma above. By
the interpolation estimate \eqref{eq: global interpolation error integral}, we can estimate the first term
on the right hand side of \eqref{eq:strang} as follows:
\begin{eqnarray}\label{eq:interpol}
\inf_{v_h\in S_h(\O)}\tbar u-v_h\tbar\leq Ch\|u\|_{\PHb^2(\O)}.
\end{eqnarray}
Next, let $w_h\in S_h(\O)$ be arbitrary. Then,
since $u\in H^1(\O)$, it follows that
\begin{eqnarray*}
a_h(u,w_h)&=&\sum_j(\beta\grad u, \grad w_{h})_{T_j}
=-\sum_j (\div\beta\grad u, w_h)_{T_j} + \sum_j\bigg\la
\beta\pp{u_j}{\bnu_j},w_h\bigg\ra_{\p T_j}\\
&=&
 (-\div\beta\grad u, w_h) + \sum_j\bigg\la \beta\pp{u_j}{\bnu_j},w_h\bigg\ra_{\p T_j}.
\end{eqnarray*}
Hence, by choosing
$m_j\in\cP_0(T_j)$ to be the the average of $w_h$ over $T_j,$ one sees that
\begin{eqnarray}
 a_h(u,w_h)-L(w_h)&=&\sum_j\bigg\la \beta\pp{u_j}{\bnu_j},w_h\bigg\ra_{\p
  T_j}
=\sum_j\bigg\la \beta\pp{u_j}{\bnu_j}-\Pi_\bnu u_j, w_h\bigg\ra_{\p T_j} \nonumber\\
&=& \sum_j\bigg\la \beta\pp{u_j}{\bnu_j}-\Pi_\bnu u_j, w_{h}-m_j\bigg\ra_{\p T_j}. \label{eq:a_h-L}
\end{eqnarray}
Hence, by Theorem \ref{th:flux_edge_approx_bnd}, the trace inequality on $T_j$, and the approximation capability of $m_j$, we have
%\eqref{eq: global interpolation error integral}, one obtains
\begin{eqnarray}
%&&\left|\sum_j\bigg\la \beta\pp{u_j}{\bnu_j}-\Pi_\bnu u_j,w_{h}-m_j\bigg\ra_{\p T_j}\right| \nonumber\\
\qquad&&\left|a_h(u,w_h)-L(w_h)\right|\nonumber\\
&\le&  \Big(\sum_{j} \left\|\beta\pp{u_j}{\bnu_j}-\Pi_\bnu u_j\right\|_{0, \partial T_j}^2\Big)^{\frac12}\Big(\sum_j\|w_h-m_j\|_{0,\partial T_j}^2\Big)^{\frac12} \nonumber \\
&\le& \Big(Ch^{\frac12}\sum_{T \in \mathcal{T}_h^n} \|u\|_{2,T} + {C}h^{\frac12}|\log h|^\frac{1}{2} \sum_{T \in \mathcal{T}_h^i} \|u\|_{\PHb^2(T)}\Big)
Ch^{\frac12}\Big(\sum_j\|\nab w_{h}\|^2_{0,T_j}\Big)^{\frac12} \nonumber \\
&\leq& Ch \Big(\sum_{T \in \mathcal{T}_h^n} \|u\|_{2,T} + |\log h|^\frac{1}{2}\sum_{T \in \mathcal{T}_h^i} \|u\|_{\PHb^2(T)}\Big)\tbar w_h\tbar.
%\nonumber \\
\label{eq13}
\end{eqnarray}
%\begin{eqnarray}
%& &\left|\sum_j\bigg\la \beta\pp{u_j}{\bnu_j}-\Pi_\bnu u_j,w_{h}-m_j\bigg\ra_{\p T_j}\right|\nonumber\\
%&\leq& C h^{\frac12-\eps} \|u\|_{\PHb^2(\tO)}
%\cdot\left(\sum_j\|w_h-m_j\|_{0,T_j}\|\nab(w_h-m_j)\|_{0,T_j}\right)^{\frac12}\nonumber\\
%&\leq& C h^{1-\eps} \|u\|_{\PHb^2(\tO)}\left(\sum_j\|\nab w_{h}\|^2_{0,T_j}\right)^{\frac12}
%\leq C h^{1-\eps}\|u\|_{\PHb^2(\tO)} \|w_h\|_{1,h}.
%\end{eqnarray}
Then, applying \eqref{eq:interpol} and \eqref{eq13} to
\eqref{eq:strang} leads to \eqref{eq: energy error estimate}.

Assume that $u\in \widetilde W^{2,q}_\beta(\Omega)$ for some
$q>2.$ Then choose $p$ such that $\frac{1}{p} + \frac{2}{q} =1,$
so that, for $T\in\mathcal{T}_h^i$
\begin{eqnarray*}
% \nonumber to remove numbering (before each equation)
  \|u\|_{\PHb^2(T)}
  &\leq& \Big(\sum_{s=\pm}\int_{T^s} \sum_{|\alpha|\leq 2} |D^\alpha u|^2dx\Big)^{\frac12} \leq
  \left(\int_{T} 1^pdx\right)^\frac{1}{2p}\Big(\sum_{s=\pm}\int_{T^s}\Big(\sum_{|\alpha|\leq 2} |D^\alpha u|^2\Big)^{\frac{q}{2}}dx\Big)^\frac{1}{q} \\
   &\leq& C|T|^\frac{1}{2p}\Big(\sum_{s=\pm}\int_{T^s}\sum_{|\alpha|\leq 2} |D^\alpha u|^qdx\Big)^\frac{1}{q}\leq Ch^{\frac{1}{p}}\|u\|_{\tilde W^{2,q}_\beta(T)}.
\end{eqnarray*}
Hence, the second term in \eqref{eq: energy error estimate} can be bounded by
\begin{equation*}
  |\log h|^\frac12\sum_{T \in \mathcal{T}_h^i}\|u\|_{\PHb^2(T)} \leq
  C\sum_{T \in \mathcal{T}_h^i}|\log
  h|^\frac12h^{\frac{1}{p}}\|u\|_{\tilde W^{2,q}_\beta(T)}.
% \leq C\sum_{T \in \mathcal{T}_h^i}\|u\|_{\tilde W^{2,q}_\beta(T)}.
\end{equation*}
Since $\lim_{h\to 0}|\log h|^\frac12h^{\frac{1}{p}}=0,$ there exists
$h_0>0$ such that
the estimate \eqref{eq: energy error estimate 2} is valid for
$0<h<h_0$. This completes the proof.
\end{proof}

\begin{remark}
Indeed, \eqref{eq: energy error estimate} implies that the IFE solution converge faster than $\mathcal{O}(h|\log h|^\frac12)$, since its multiplication factor, $\sum_{T \in \mathcal{T}_h^i}\|u\|_{\PHb^2(T)}$, goes to zero as $h\to 0$.
\end{remark}

\subsection{Duality and the $L^2$-Error Estimate}\label{sec4}
Let
\[
\eta_h = \cI_{h} u-u_h\in \mathring{S}_h(\O),
\]
and let $\psi\in \PHb^2(\O)$ be the solution of the dual problem:
\begin{subeqnarray} \label{eq:dual}
-\nab\cdot(\beta\nab\psi)&=& \eta_h \quad \text{in }\O, \\
\psi &=& 0 \quad\  \text{on }\p\O.
\end{subeqnarray}
Assume that the interface problem \eqref{eq:weakform} is $\PHb^2(\O)$-regular
so that the elliptic regularity estimate holds:
\begin{eqnarray}
\|\psi\|_{\PHb^2(\O)}\leq C\|\eta_h\|_0. \label{eq:regularity_assumption}
\end{eqnarray}
We start from recalling the following standard estimates for the IFE interpolation $\cI_h \psi$: there exists a constant $C$ such that
\begin{eqnarray}\label{eq:IFE_interp_bounds}
\begin{cases}
\|\cI_h \psi\|_{2,T} \leq C \left\|\psi\right\|_{2,T} &\forall ~T \in \mathcal{T}_h^n, \\
\|\cI_h \psi\|_{2, T_j^-} + \|\cI_h \psi\|_{2, T_j^+} \leq C\left\|\psi \right\|_{{\PHb^2(T)}} &\forall~ T\in \mathcal{T}_h^i.
\end{cases}
\end{eqnarray}
Since $\eta_h\in \mathring{S}_h(\O),$ it follows that
\begin{eqnarray*}
\|\eta_h\|_0^2&=&(-\div\beta\grad\psi,\eta_h)=a_h(\psi,\eta_h)-\sum_j\bigg\la
\beta\pp{\psi_j}{\bnu_j},\eta_{h_j}\bigg\ra_{\p T_j}\\
&=&a_h(\psi,\eta_h)-\sum_j\bigg\la
\beta\pp{\psi_j}{\bnu_j}-\Pi_\bnu \psi_j,\eta_{h_j}-q_j\bigg\ra_{\p
  T_j}
\quad\text{for all }q_j\in\cP_0(T_j).
\end{eqnarray*}
 Next, for all $v_h\in\mathring{S}_h(\O)$, similarly to \eqref{eq:a_h-L}, we have
\begin{eqnarray*}
a_h(\eta_h,v_h)&=&a_h(u,v_h)-a_h(u_h,v_h)-a_h(u-\cI_h u,v_h) \\
&=&\sum_j\bigg\la
\beta\pp{u_j}{\bnu_j}-\Pi_\bnu u_j,v_{h_j}\bigg\ra_{\p T_j}-a_h(u-\cI_h u,v_h).
\end{eqnarray*}
Using the property $\jump{\psi}{\cbe_{jk}}=0$ and recalling the
definition of $\Pi_\bnu$, we see that
\[
\left\la \beta\pp{u_j}{\bnu_j}-\Pi_\bnu u_j,\psi_j\right\ra_{\cbe_{jk}} +
\left\la \beta\pp{u_k}{\bnu_k}-\Pi_\bnu u_k,\psi_k\right\ra_{\cbe_{kj}} = 0.
\]
In addition, note that for $v_h\in S_h(\O)$, $-\div(\beta\grad v_h) = 0$ on every $T\in\mathcal{T}_h$; hence,
\begin{eqnarray*}
a_h(u-\cI_h u,v_h) &=& \sum_j(u-\cI_h u,-\div(\beta\grad v_h))_{T_j}+\sum_j\bigg\la u-\cI_h u,\beta\pp{v_h}{\bnu_j}\bigg\ra_{\p T_j} \\
&=& \sum_j\bigg\la u-\cI_h u,\beta\pp{v_h}{\bnu_j} - \Pi_{\bnu_j} v_h\bigg\ra_{\p T_j}.
\end{eqnarray*}
Therefore
\begin{eqnarray}
\|\eta_h\|_0^2&=& a_h(\psi,\eta_h)-\sum_j\bigg\la
\beta\pp{\psi_j}{\bnu_j}-\Pi_\bnu \psi_j,\eta_{h_j}-q_j\bigg\ra_{\p T_j} \nonumber \\
%&=& a_h(\eta_h, \psi - v_h) + a_h(\eta_h, v_h) -\sum_j\bigg\la
%\beta\pp{\psi_j}{\bnu_j}-\Pi_\bnu \psi_j,\eta_{h_j}-q_j\bigg\ra_{\p T_j} \nonumber \\
&=& a_h(\eta_h,\psi-v_h)-a_h(u-\cI_h u,v_h)
\nonumber\\
&-&\sum_j\bigg\la
\beta\pp{\psi_j}{\bnu_j}-\Pi_\bnu \psi_j,\eta_{h_j}-q_j\bigg\ra_{\p T_j}
+\sum_j\bigg\la \beta\pp{u_j}{\bnu_j}-\Pi_\bnu u_j,v_{h_j}-\psi_j\bigg\ra_{\p T_j} \nonumber \\
&=& a_h(\eta_h,\psi-v_h)-\sum_j\bigg\la u-\cI_h u,\beta\pp{v_h}{\bnu_j} - \Pi_{\bnu_j} v_h\bigg\ra_{\p T_j} \nonumber\\
&-&\sum_j\bigg\la
\beta\pp{\psi_j}{\bnu_j}-\Pi_\bnu \psi_j,\eta_{h_j}-q_j\bigg\ra_{\p T_j}
+\sum_j\bigg\la \beta\pp{u_j}{\bnu_j}-\Pi_\bnu u_j,v_{h_j}-\psi_j\bigg\ra_{\p T_j}.
\label{eq5.6}
\end{eqnarray}

With these preparations, we are ready to derive the error estimate in the $L^2$--norm for the IFE solution.

\begin{theorem}\label{thml2} Assume the interface problem \eqref{eq:weakform}
is $\PHb^2(\O)$-regular. Then, there exists a constant $C$ such that
the $L^2$-norm error of the IFE solution satisfies the following
estimate:
% for every $\eps \in (0, 1/4)$:
\begin{equation}\label{eq: L2 error estimate}
\|u-u_h\|_0 \leq Ch^2\left[|\log h|^{\frac{1}{2}}\|u\|_{\PHb^2(\O)} + |\log h|\sum_{T \in \mathcal{T}_h^i}\|u\|_{\PHb^2(T)}\right].
\end{equation}
\end{theorem}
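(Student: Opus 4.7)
The plan is to take $\eta_h = \Pih u - u_h^I$, apply the duality identity \eqref{eq5.6} with the concrete choices $v_h = \cI_h^I\psi$ and $q_j = \frac{1}{|T_j|}\int_{T_j}\eta_h\,\dbx$, and bound each of the four resulting boundary sums by Cauchy--Schwarz. Controlling $\|\eta_h\|_0$ in this way and then invoking the triangle inequality together with the global interpolation estimate of Theorem \ref{th:I_h^I_bound} will deliver \eqref{eq: L2 error estimate}.

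For the first term $a_h(\eta_h, \psi - v_h)$, I would use $\tbar\eta_h\tbar \, \tbar\psi - \cI_h^I\psi\tbar \le Ch\|\psi\|_{\PHb^2(\O)}\tbar\eta_h\tbar$, and bound $\tbar\eta_h\tbar$ via Theorem \ref{thmh1} plus the energy-norm interpolation estimate, producing the factor $Ch|\log h|^{1/2}\bigl[\|u\|_{\PHb^2(\O)} + \sum_{T\in\mathcal{T}_h^i}\|u\|_{\PHb^2(T)}\bigr]$. For the remaining three sums the central tool is Theorem \ref{th:flux_edge_approx_bnd}, which yields $O(h^{1/2})$ on non-interface edges and $O(h^{1/2}|\log h|^{1/2})$ on interface edges when applied to $\beta\partial_{\bnu}\psi - \Pi_\bnu\psi$, $\beta\partial_{\bnu}u - \Pi_\bnu u$, or $\beta\partial_{\bnu}v_h - \Pi_\bnu v_h$ (the last via \eqref{eq:IFE_interp_bounds}). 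The paired boundary residuals are handled routinely: $\|u-\Pih u\|_{0,\partial T_j}$ and $\|\psi - v_h\|_{0,\partial T_j}$ are $O(h^{3/2})$ by a scaled trace inequality combined with Theorem \ref{th:I_h^I_bound}, while $\|\eta_h - q_j\|_{0,\partial T_j} \le Ch^{1/2}|\eta_h|_{1,T_j}$ is the classical estimate for approximation by constants, and the aggregated $\sum_j|\eta_h|_{1,T_j}^2$ is dominated by $\tbar\eta_h\tbar^2/\min\{\beta^+,\beta^-\}$.

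The main obstacle, and the origin of the $|\log h|$ factor attached to the interface sum in \eqref{eq: L2 error estimate}, is the third term $\sum_j\bigl\langle \beta\partial_{\bnu_j}\psi_j - \Pi_\bnu\psi_j,\, \eta_{h_j}-q_j\bigr\rangle_{\partial T_j}$. Here the $|\log h|^{1/2}$ emerging from Theorem \ref{th:flux_edge_approx_bnd} applied to $\psi$ on interface edges multiplies the $|\log h|^{1/2}$ already hidden in $\tbar\eta_h\tbar$, yielding the full $|\log h|$. The second and fourth terms, by contrast, only accumulate $|\log h|^{1/2}$ because the interpolation residuals $u-\Pih u$ and $\psi - \cI_h^I\psi$ carry no logarithmic factor.

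Adding the four bounds gives
\begin{equation*}
\|\eta_h\|_0^2 \le Ch^2\Big[|\log h|^{\tfrac12}\|u\|_{\PHb^2(\O)} + |\log h|\sum_{T\in\mathcal{T}_h^i}\|u\|_{\PHb^2(T)}\Big]\|\psi\|_{\PHb^2(\O)}.
\end{equation*}
I then apply the regularity assumption \eqref{eq:regularity_assumption} to replace $\|\psi\|_{\PHb^2(\O)}$ by $C\|\eta_h\|_0$, divide once by $\|\eta_h\|_0$, and combine the resulting estimate with $\|u-\Pih u\|_0 \le Ch^2\|u\|_{\PHb^2(\O)}$ from Theorem \ref{th:I_h^I_bound} via the triangle inequality to conclude \eqref{eq: L2 error estimate}.
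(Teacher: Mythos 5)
Your proposal follows essentially the same route as the paper: the same duality identity \eqref{eq5.6} with $v_h=\cI_h^I\psi$ and $q_j$ the elementwise average of $\eta_h$, the same term-by-term bounds via Theorem \ref{th:flux_edge_approx_bnd}, the trace/constant-approximation estimates, and Theorems \ref{th:I_h^I_bound} and \ref{thmh1}, and the same mechanism producing the $|\log h|$ factor (the product of the $|\log h|^{1/2}$ from the flux projection on interface edges with the $|\log h|^{1/2}$ in the energy-norm bound for $\tbar\eta_h\tbar$). The only difference is cosmetic --- you defer the regularity substitution $\|\psi\|_{\PHb^2(\O)}\le C\|\eta_h\|_0$ to the end rather than applying it term by term --- so the argument is correct and matches the paper's proof.
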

\begin{proof}
We proceed to estimate each term on the right hand side of \eqref{eq5.6}. First, choose $v_h = \cI_h \psi$. Then,
by \eqref{eq: global interpolation error integral} and \eqref{eq:regularity_assumption},
the first term on the right-hand side of \eqref{eq5.6} is bounded as follows:
\begin{eqnarray}\label{eq:etah,psi-vh}
|a_h(\eta_h,\psi-v_h)| = |a_h(\eta_h,\psi-\cI_h\psi)|\leq C h \tbar \eta_h\tbar\|\eta_h\|_0.
\end{eqnarray}
Again, choosing $q_j\in\cP_0(T_j)$ to be the the average of $\eta_h$ over $T_j$, by Theorem \ref{th:flux_edge_approx_bnd},
the trace inequality on
$T_j$, Theorem \ref{th:I_h^I_bound}, and \eqref{eq:regularity_assumption}, we can bound the last two terms on the right hand side of
\eqref{eq5.6} as follows:
\begin{eqnarray}
&&\bigg|\sum_j\bigg\la
\beta\pp{\psi_j}{\bnu_j}-\Pi_\bnu \psi_j,\eta_{h_j}-q_j\bigg\ra_{\p
  T_j}\bigg| + \bigg|\sum_j\bigg\la
\beta\pp{u_j}{\bnu_j}-\Pi_\bnu u_j,v_{h_j}-\psi_j\bigg\ra_{\p
  T_j}\bigg| \nonumber\\
&&\qquad\leq Ch \Big(\sum_{T \in \mathcal{T}_h^n} \|\psi\|_{2,T} + |\log h|^\frac12\sum_{T \in \mathcal{T}_h^i} \|\psi\|_{\PHb^2(T)}\Big)
\tbar \eta_h\tbar \nonumber \\
&&\qquad\qquad\qquad + Ch^2 \Big(\sum_{T \in \mathcal{T}_h^n} \|u\|_{2,T} + |\log h|^\frac12\sum_{T \in \mathcal{T}_h^i} \|u\|_{\PHb^2(T)}\Big)
\|\psi\|_{\PHb^2(\O)} \nonumber \\
&&\qquad\leq  Ch \Big(|\log h|^\frac12 \tbar \eta_h\tbar + h\sum_{T \in
    \mathcal{T}_h^n} \|u\|_{2,T} +
h|\log h|^\frac12\sum_{T \in \mathcal{T}_h^i} \|u\|_{\PHb^2(T)}\Big)
\|\eta_h\|_{0}.
\label{eq:betadpsidn}
%&&\qquad\qquad \le C h^{1-\eps}  \|\eta_h\|_0 \left[ \tbar \eta_h\tbar +
% h \|u\|_{\PHb^2(\O)} \right].
\end{eqnarray}
For the second term in \eqref{eq5.6}, by Theorem \ref{th:I_h^I_bound}, Theorem \ref{th:flux_edge_approx_bnd}, \eqref{eq:IFE_interp_bounds} and
\eqref{eq:regularity_assumption},
\begin{eqnarray}\label{eq:2nd_term}
\qquad&&\Big|\sum_j\bigg\la u-\cI_h u,\beta\pp{v_{h_j}}{\bnu_j} - \Pi_{\bnu_j} v_h\bigg\ra_{\p T_j}\Big| \nonumber \\
%&\leq& \sum_j \left|u-\Pih u\right|_{0, \p T_j} \left|\beta\pp{v_{h_j}}{\bnu_j} - \Pi_{\bnu_j} v_h\right|_{0,\p T_j} \nonumber \\
&\leq& \Big(\sum_j \left|u-\cI_h u\right|_{0, \p T_j}^2\Big)^{\frac12} \Big(\sum_j \left|\beta\pp{v_{h_j}}{\bnu_j} - \Pi_{\bnu_j} v_h\right|_{0,\p T_j}^2\Big)^{\frac12} \nonumber \\
&\leq& Ch^{\frac32}\|u\|_{\PHb^2(\O)}\Big(h\sum_{T \in
    \mathcal{T}_h^n}\|v_h\|_{\PHb^2(T)}^2 + h|\log h|
\sum_{T \in \mathcal{T}_h^i}\big(\|v_h\|_{2,T^-}^2 + \|v_h\|_{2,T^+}^2\big) \Big)^{\frac12} \nonumber \\
%&\leq& Ch^{\frac32}\|u\|_{\PHb^2(\O)}\Big(h\sum_{T \in \mathcal{T}_h^n}\|\psi\|_{\PHb^2(T)}^2 + h|\log h|
%\sum_{T \in \mathcal{T}_h^i}\|\psi\|_{\PHb^2(T)}^2\Big)^{\frac12} \nonumber \\
&\leq&C h^2|\log h|^\frac12\|u\|_{\PHb^2(\O)}\|\eta_h\|_0.
\end{eqnarray}
Plugging the estimates \eqref{eq:etah,psi-vh}--\eqref{eq:2nd_term} in
\eqref{eq5.6}
gives
\begin{eqnarray*}
\|\eta_h\|_0
%&\leq& Ch\tbar\eta_h \tbar + C h^2|\log h|^\frac12\|u\|_{\PHb^2(\O)} \\
%&& + Ch\Big(|\log h|^\frac12\tbar \eta_h\tbar + h \sum_{T \in \mathcal{T}_h^n}
%  \|u\|_{2,T}
%+ h|\log h|^\frac12\sum_{T \in \mathcal{T}_h^i} \|u\|_{\PHb^2(T)}\Big) \\
&\leq& Ch|\log h|^\frac12\tbar \eta_h\tbar + Ch^2|\log h|^\frac12\|u\|_{\PHb^2(\O)} \\
&\leq& Ch|\log h|^\frac12 \left(\tbar \cI_hu - u\tbar + \tbar u -
u_h\tbar\right)
 + C h^2 |\log h|^\frac12\|u\|_{\PHb^2(\O)}.
\end{eqnarray*}
Finally, applying Theorem \ref{th:I_h^I_bound} and Theorem \ref{thmh1}
to the above estimate, we arrive at the desired estimate
\eqref{eq: L2 error estimate}. This completes the proof. \end{proof}

\begin{remark}
The estimate given in \eqref{eq: L2 error estimate} suggests that the IFE solution converges in $L^2$-norm better than $\mathcal{O}(h^2|\log h|)$ which is optimal sans the usual $|\log h|$ factor.
\end{remark}

\begin{remark}
An optimal rate $O(h^2)$ without $|\log h|$ factor may be obtained with slightly better regularity $u\in \tilde W^{2,q}_\beta(\Omega)$, $q>2$, and the elliptic regularity assumption based on $L^q$-norm. In addition, the analysis requires the interpolation error estimates for IFE functions based on $L^q$-norm, which will be an interesting future work.
\end{remark}

\section{Numerical Examples}
\label{sec: numerical examples}
In this section, we present numerical examples to demonstrate
the features of this nonconforming rotated-$Q_1$ IFE method for elliptic interface
problems.

We test these IFE methods with the same example as given in
\cite{2008HeLinLin, 2013ZhangTHESIS}.
Let $\O = (-1,1)^2$, and the interface curve $\G$ is the
circle centered at the origin with radius $r_0 = \pi/6.28$,
which separates the domain into two sub-domains:
\begin{equation*}
    \O^- = \{(x,y)\in \O: x^2+y^2<r_0^2\},~~~\O^+ = \{(x,y)\in \O:~ x^2+y^2>r_0^2\}.
\end{equation*}
The boundary condition $g$ and the source function $f$ are chosen
such that the exact solution is as follows:
\begin{equation}\label{eq: exact solution}
    u(x,y) =
    \left\{
      \begin{array}{ll}
        \dfrac{r^a}{\beta^-} & \text{if } r<r_0, \\
        \dfrac{r^a}{\beta^+} + \left(\dfrac{1}{\beta^-} - \dfrac{1}{\beta^+}\right)r_0^a & \text{if } r>r_0,
      \end{array}
    \right.
\end{equation}
where $a = 5$, $r = \sqrt{x^2+y^2}$. We use a family of Cartesian
meshes $(\mathcal{T}_h)_{0<h<1},$ each of which consists of
$N\times N$ congruent squares of size $h = 2/N$.
Errors of an IFE approximation are given in $L^\infty$-, $L^2$-,
and semi $H^1$- norms.
%In the following error tables, rates of convergence
%are computed by applying the formulas:
%\begin{equation}\label{eq: how to compute rate}
%    \frac{1}{\ln(2)}\ln\left(\frac{\|v_{h} - u\|}{\|v_{h/2} - u\|}\right),
%\end{equation}
%for a specific norm $\|\cdot\|$, where $v_{h} = \Pih u$ or $v_{h} =
%u^I_{h}$.
Error in $L^\infty$-norm is calculated using the formula:
\begin{equation}\label{eq: discrete infinity norm error}
    \|u_{h}-u\|_{L^\infty} = \max_{T\in \mathcal{T}_h}\bigg(\max_{(x,y)\in{\hat T} \subset T} |u_{h}(x,y)-u(x,y)|\bigg),
\end{equation}
where $\hat T$ consists of the $49$ uniformly distributed points in $T$.
The $L^2$ and semi $H^1$ norms are computed using the 9-point Gaussian
quadratures.

Our first numerical experiment considers a moderate coefficient jump $(\beta^-,\beta^+) = (1,10)$.  Errors of numerical solutions are reported in Table
\ref{table: Nonconforming IFE integral Galerkin integral 1 10}.  Convergence
rates in semi $H^1$-norm and $L^2$-norm confirm our error analysis \eqref{eq: energy error estimate} and
\eqref{eq: L2 error estimate}. Data in these tables also suggest that the convergence rate in $L^\infty$-norm are approximately $O(h^2)$, which is also optimal from the point view of the degree of polynomials in constructing IFE spaces $S_h(\Omega)$.

\begin{table}[!hbt]
\begin{small}
\caption{Errors of Galerkin IFE solutions $u-u_{h}$ with $\beta^-=1$, $\beta^+=10$}
\begin{center}
\begin{tabular}{|c|cc|cc|cc|}
\hline
$N$ & $\|\cdot\|_{L^\infty}$ & rate & $\|\cdot\|_{L^2}$ & rate & $|\cdot|_{H^{1}}$ & rate \\
\hline
$10$      &$ 2.6183E{-2}$ & &$ 1.1395E{-2}$ & &$ 1.9585E{-1}$ &\\
%\hline
$20$      &$ 7.3444E{-3}$ & 1.8339&$ 2.9860E{-3}$ & 1.9321&$ 9.9065E{-2}$ & 0.9833\\
%\hline
$40$      &$ 1.9455E{-3}$ & 1.9165&$ 7.4374E{-4}$ & 2.0054&$ 4.9894E{-2}$ & 0.9895\\
%\hline
$80$      &$ 5.0072E{-4}$ & 1.9580&$ 1.8547E{-4}$ & 2.0036&$ 2.5026E{-2}$ & 0.9955\\
%\hline
$160$     &$ 1.2702E{-4}$ & 1.9789&$ 4.6313E{-5}$ & 2.0017&$ 1.2531E{-2}$ & 0.9979\\
%\hline
$320$     &$ 3.1989E{-5}$ & 1.9894&$ 1.1671E{-5}$ & 1.9885&$ 6.2702E{-3}$ & 0.9990\\
%\hline
$640$     &$ 8.0267E{-6}$ & 1.9947&$ 2.9122E{-6}$ & 2.0027&$ 3.1363E{-3}$ & 0.9995\\
%\hline
$1280$    &$ 2.0101E{-6}$ & 1.9975&$ 7.2684E{-7}$ & 2.0024&$ 1.5684E{-3}$ & 0.9997\\
\hline
\end{tabular}
\end{center}
\label{table: Nonconforming IFE integral Galerkin integral 1 10}
\end{small}
\end{table}

The error surface $e_h = |u_h - u|$  are reported in Figure \ref{fig: comparison point-wise error}. For comparison, we also plot the error surface of Lagrange bilinear IFE solutions \cite{2008HeLinLin, 2012HeLinLin}. These plots are generated on the same mesh containing $80\times 80$ elements. We note that the error of bilinear IFE solution is much larger around interface than the rest of domain, in fact, the error at the interface looks like an ``interface crown". However, the nonconforming IFE solution is much more accurate than bilinear IFE solution around the interface. In fact, there is no apparent ``crown" around interface, which indicates the accuracy of nonconforming IFE solution around interface are comparable to the accuracy far away from the interface. 

%We suspect the reason for poor accuracy around interface is because the discontinuity of IFE functions across interface could be large for certain configurations of the interface location and diffusion coefficients.
%The plot on the right is error surface of PPIFE solution $\tilde u_h^P$ by adding partial penalty in the scheme similar as \cite{2015LinLinZhang}. Note that with suitable penalization, the error can be reduced for

\begin{figure}[!htb]
\caption{Comparison of point-wise errors of the nonconforming rotated $Q_1$ IFE and the bilinear IFE solutions}
\begin{center}
\includegraphics[width=0.49\textwidth]{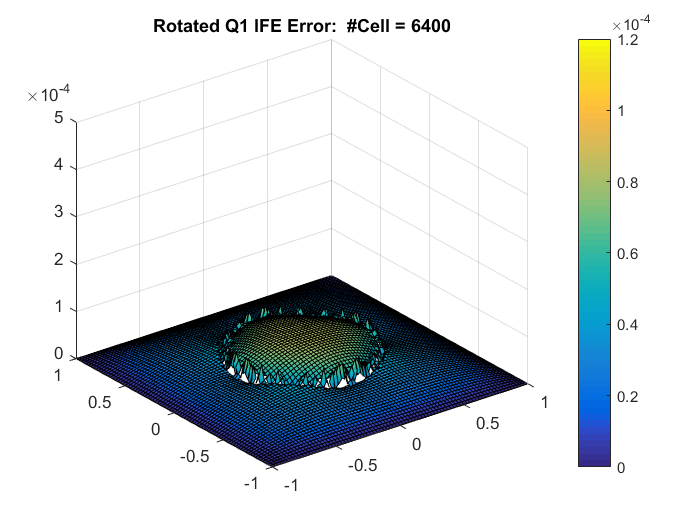}
\includegraphics[width=0.49\textwidth]{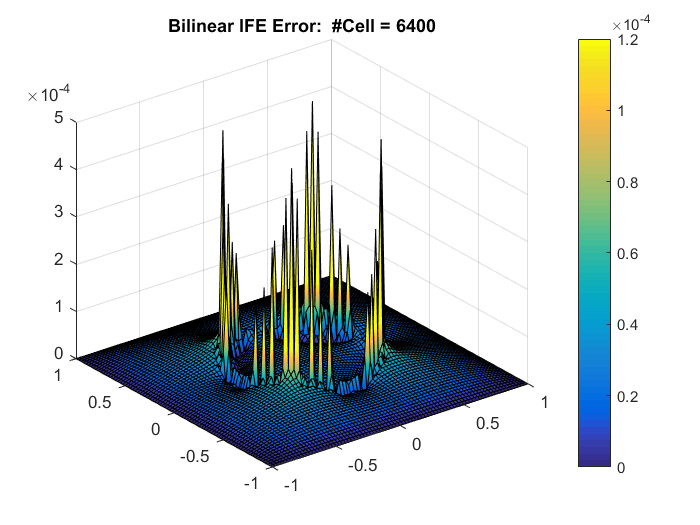}
\end{center}
\label{fig: comparison point-wise error}
\end{figure}

Next, we take a closer look of the nonconforming rotated-$Q_1$ IFE functions and Lagrange bilinear IFE functions \cite{2008HeLinLin}. In Figure \ref{fig: comparison Q1 RQ1 basis}, we plot the global bases of bilinear and rotated-$Q_1$ IFE function on two adjacent elements. In the left plot, there is a large gap on common interface edge of a bilinear IFE basis, where the continuity is only enforced at two endpoints of that edge.  To see it more clearly, in the right plot, the traces of this bilinear IFE function are plotted in blue curves which indicate that the largest discontinuity occurs at the intersection point of the edge and the interface. On the other hand, the middle plot shows that the discontinuity of a rotated-$Q_1$ IFE basis is scattered throughout the interface edge and it is less prominent. In fact, the rotated-$Q_1$ basis is weakly continuous across the edge in the sense that the mean values of the two traces are exactly the same.
The traces of this rotated-$Q_1$ IFE function are plotted in red curves which also demonstrate a smaller discontinuity across the interface edge. This shows why the rotated $Q_1$ IFE methods outperform bilinear IFE methods. 

\begin{figure}[tbh]
\begin{center}
\includegraphics[width=0.3\textwidth]{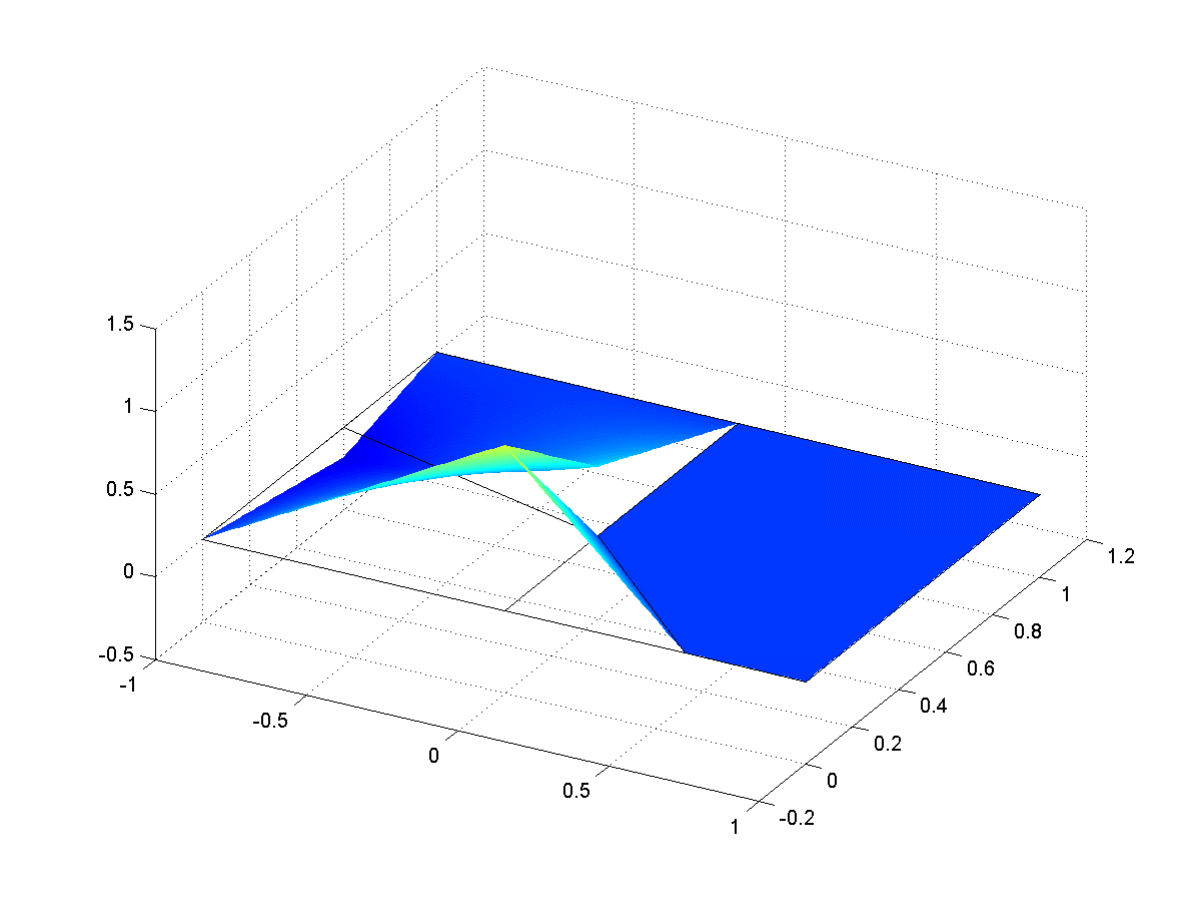}
\includegraphics[width=0.3\textwidth]{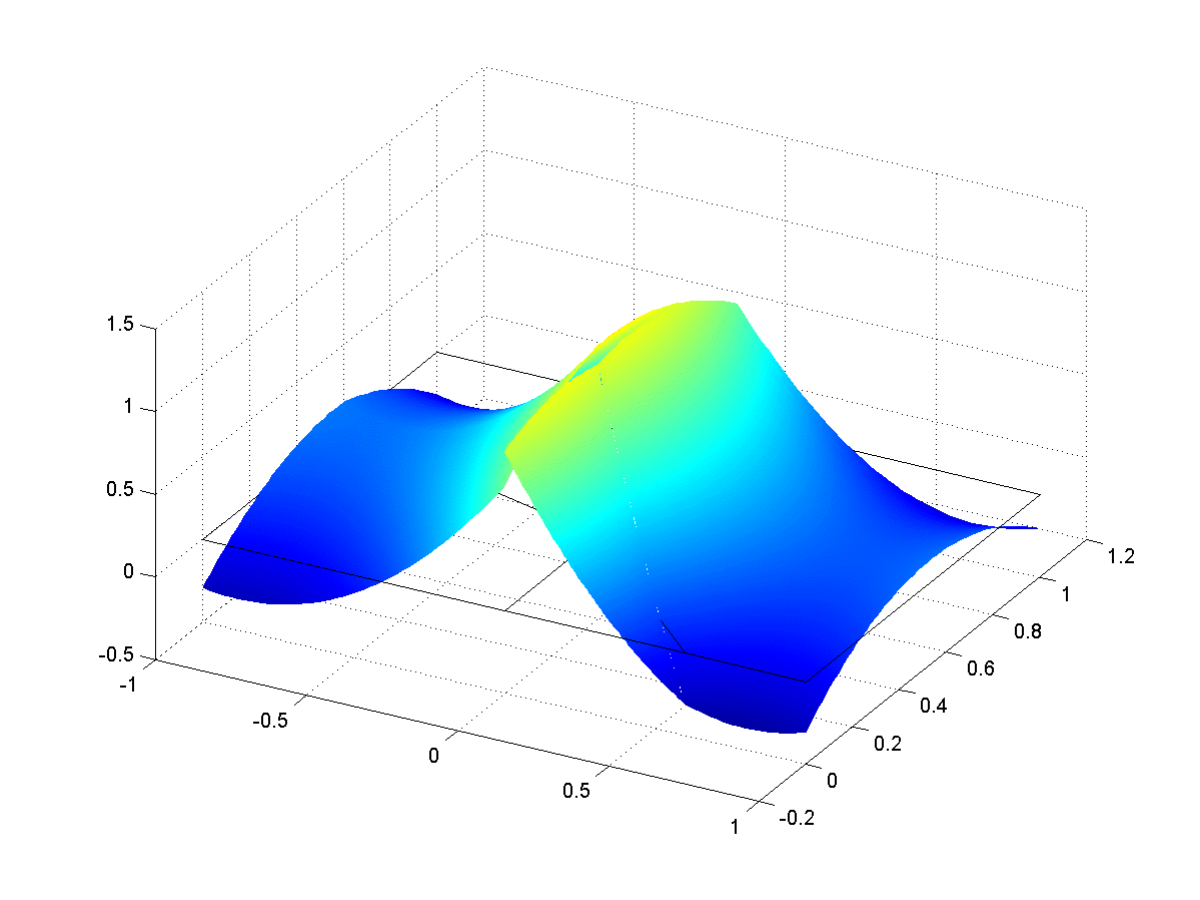}
\includegraphics[width=0.25\textwidth]{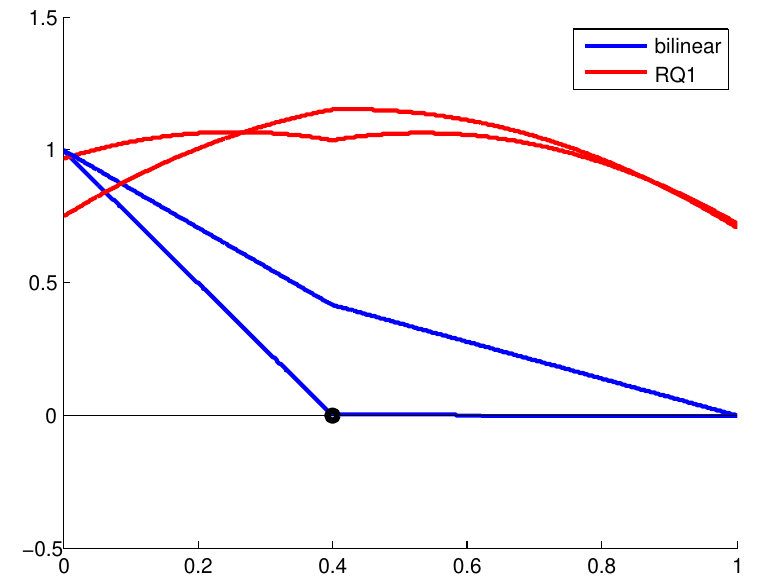}
\end{center}
\caption{Comparison of bilinear and rotated-$Q_1$ IFE global bases with $\beta^-=1$, $\beta^+ = 1000$.}
\label{fig: comparison Q1 RQ1 basis}
\end{figure}

As for robustness of our nonconforming IFE methods respect to the contrast of jumping coefficient, we test our methods in additional three coefficient configurations including the large contrast  $(\beta^-,\beta^+) = (1,10000)$, the flipping of the coefficients, i.e., $(\beta^-,\beta^+) = (10,1)$, and $(\beta^-,\beta^+) = (10000,1)$. 
Errors and convergence rates are reported in Tables \ref{table: Nonconforming IFE integral Galerkin integral 1 10000},  \ref{table: Nonconforming IFE integral Galerkin integral 10 1}, and \ref{table: Nonconforming IFE integral Galerkin integral 10000 1}, respectively. The convergence rates for all the cases are optimal, and this suggests the robustness of our IFE scheme with respect to the jump of coefficients.

\begin{table}[!hbt]
\caption{Errors of Galerkin IFE solutions $u-u_{h}$ with $\beta^-=1$, $\beta^+=10000$}
\begin{center}
\begin{tabular}{|c|cc|cc|cc|}
\hline
$N$ & $\|\cdot\|_{L^\infty}$ & rate & $\|\cdot\|_{L^2}$ & rate & $|\cdot|_{H^{1}}$ & rate \\
\hline
$10$      &$ 5.9646E{-3}$ & &$ 2.7360E{-3}$ & &$ 4.0678E{-2}$ &\\
%\hline
$20$      &$ 2.5455E{-3}$ & 1.2285 &$ 1.0526E{-3}$ & 1.3782&$ 2.7824E{-2}$ & 0.5479\\
%\hline
$40$      &$ 7.1692E{-4}$ & 1.8281 &$ 2.5767E{-4}$ & 2.0303&$ 1.4700E{-2}$ & 0.9205\\
%\hline
$80$      &$ 2.1533E{-4}$ & 1.7353 &$ 6.3614E{-5}$ & 2.0181&$ 7.5491E{-3}$ & 0.9614\\
%\hline
$160$     &$ 5.9653E{-5}$ & 1.8519 &$ 1.5531E{-5}$ & 2.0342&$ 3.7978E{-3}$ & 0.9911\\
%\hline
$320$     &$ 1.5521E{-5}$ & 1.9423 &$ 4.0823E{-6}$ & 1.9277&$ 1.9146E{-3}$ & 0.9881\\
%\hline
$640$     &$ 4.1575E{-6}$ & 1.9005 &$ 1.0069E{-6}$ & 2.0194&$ 9.5881E{-4}$ & 0.9977\\
%\hline
$1280$    &$ 1.0588E{-6}$ & 1.9733 &$ 2.4921E{-7}$ & 2.0145&$ 4.8004E{-4}$ & 0.9981\\
\hline
\end{tabular}
\end{center}
\label{table: Nonconforming IFE integral Galerkin integral 1 10000}
\end{table}

\begin{table}[!hbt]
\caption{Errors of Galerkin IFE solutions $u-u_{h}$ with $\beta^-=10$, $\beta^+=1$}
\begin{center}
\begin{tabular}{|c|cc|cc|cc|}
\hline
$N$ & $\|\cdot\|_{L^\infty}$ & rate & $\|\cdot\|_{L^2}$ & rate & $|\cdot|_{H^{1}}$ & rate \\
\hline
$10$      &$2.5249E{-2}$ & &$1.0347E{-1}$ & &$ 1.8872E{-0}$ &\\
%\hline
$20$      &$6.1647E{-3}$ &2.0341 &$2.6094E{-2}$ &1.9874 &$9.5266E{-1}$ & 0.9862\\
%\hline
$40$      &$1.5899E{-3}$ &1.9551 &$6.5402E{-3}$ &1.9963 &$4.7745E{-1}$ & 0.9966\\
%\hline
$80$      &$4.0597E{-4}$ &1.9695 &$1.6363E{-3}$ &1.9989 &$2.3887E{-1}$ & 0.9991\\
%\hline
$160$     &$1.0382E{-4}$ &1.9673 &$4.0917E{-4}$ &1.9997 &$1.1945E{-1}$ & 0.9998\\
%\hline
$320$     &$2.6221E{-5}$ &1.9853 &$1.0227E{-4}$ &2.0003 &$5.9730E{-2}$ & 0.9999\\
%\hline
$640$     &$6.6006E{-6}$ &1.9900 &$2.5570E{-5}$ &1.9998 &$2.9865E{-2}$ & 1.0000\\
%\hline
$1280$    &$1.6613E{-6}$ &1.9903 &$6.3931E{-6}$ &1.9999 &$1.4933E{-2}$ & 1.0000\\
\hline
\end{tabular}
\end{center}
\label{table: Nonconforming IFE integral Galerkin integral 10 1}
\end{table}

\begin{table}[!hbt]
\caption{Errors of Galerkin IFE solutions $u-u_{h}$ with $\beta^-=10000$, $\beta^+=1$}
\begin{center}
\begin{tabular}{|c|cc|cc|cc|}
\hline
$N$ & $\|\cdot\|_{L^\infty}$ & rate & $\|\cdot\|_{L^2}$ & rate & $|\cdot|_{H^{1}}$ & rate \\
\hline
$10$      &$2.5887E{-2}$ & &$1.0332E{-1}$ & &$ 1.8874E{-0}$ &\\
%\hline
$20$      &$9.0928E{-3}$ &1.5094 &$2.6085E{-2}$ &1.9858 &$9.5275E{-1}$ & 0.9862\\
%\hline
$40$      &$2.2570E{-3}$ &2.0144 &$6.5319E{-3}$ &1.9977 &$4.7747E{-1}$ & 0.9967\\
%\hline
$80$      &$5.1846E{-4}$ &2.1180 &$1.6345E{-3}$ &1.9987 &$2.3887E{-1}$ & 0.9992\\
%\hline
$160$     &$1.3253E{-4}$ &1.9679 &$4.0880E{-4}$ &1.9994 &$1.1945E{-1}$ & 0.9998\\
%\hline
$320$     &$3.1459E{-5}$ &2.0748 &$1.0219E{-4}$ &2.0002 &$5.9729E{-2}$ & 0.9999\\
%\hline
$640$     &$7.7833E{-6}$ &2.0150 &$2.5551E{-5}$ &1.9998 &$2.9865E{-2}$ & 1.0000\\
%\hline
$1280$    &$1.9252E{-6}$ &2.1504 &$6.3885E{-6}$ &1.9998 &$1.4933E{-2}$ & 1.0000\\
\hline
\end{tabular}
\end{center}
\label{table: Nonconforming IFE integral Galerkin integral 10000 1}
\end{table}

\section{Conclusions}
In this article, we develop the rotated-$Q_1$ nonconforming IFE space based on integral value degrees of freedom. This new IFE space can be used in the usual Galerkin formulation to solve elliptic interface problems. The IFE space is proved to have optimal approximation capabilities. Error analysis of the Galerkin IFE solutions using integral-value degrees of freedom shows the quasi-optimal convergence rates in both energy and the $L^2$ norms.

%\begin{thebibliography}{99}
\bibliographystyle{abbrv}
\bibliography{xuzhangBib}

\begin{thebibliography}{10}

\bibitem{1970Babuska}
I.~Babu{\v{s}}ka.
\newblock The finite element method for elliptic equations with discontinuous
  coefficients.
\newblock {\em Computing (Arch. Elektron. Rechnen)}, 5:207--213, 1970.

\bibitem{2017CaoZhangZhang}
W.~Cao, X.~Zhang, and Z.~Zhang.
\newblock Superconvergence of immersed finite element methods for interface
  problems.
\newblock {\em Adv. Comput. Math.}, 43(4):795--821, 2017.

\bibitem{1998ChenOswald}
Z.~Chen and P.~Oswald.
\newblock Multigrid and multilevel methods for nonconforming {$Q_1$} elements.
\newblock {\em Math. Comp.}, 67(222):667--693, 1998.

\bibitem{1998ChenZou}
Z.~Chen and J.~Zou.
\newblock Finite element methods and their convergence for elliptic and
  parabolic interface problems.
\newblock {\em Numer. Math.}, 79(2):175--202, 1998.

\bibitem{2010ChouKwakWee}
S.-H. Chou, D.~Y. Kwak, and K.~T. Wee.
\newblock Optimal convergence analysis of an immersed interface finite element
  method.
\newblock {\em Adv. Comput. Math.}, 33(2):149--168, 2010.

\bibitem{1978Ciarlet}
P.~G. Ciarlet.
\newblock {\em The finite element method for elliptic problems}.
\newblock North-Holland Publishing Co., Amsterdam-New York-Oxford, 1978.
\newblock Studies in Mathematics and its Applications, Vol. 4.

\bibitem{1973CrouzeixRaviart}
M.~Crouzeix and P.-A. Raviart.
\newblock Conforming and nonconforming finite element methods for solving the
  stationary {S}tokes equations. {I}.
\newblock {\em Rev. Fran\c caise Automat. Informat. Recherche Op\'erationnelle
  S\'er. Rouge}, 7(R-3):33--75, 1973.

\bibitem{1999DouglasSantosSheenYe}
J.~Douglas, Jr., J.~E. Santos, D.~Sheen, and X.~Ye.
\newblock Nonconforming {G}alerkin methods based on quadrilateral elements for
  second order elliptic problems.
\newblock {\em M2AN Math. Model. Numer. Anal.}, 33(4):747--770, 1999.

\bibitem{1986GiraultRaviart}
V.~Girault and P.-A. Raviart.
\newblock {\em Finite element methods for {N}avier-{S}tokes equations},
  volume~5 of {\em Springer Series in Computational Mathematics}.
\newblock Springer-Verlag, Berlin, 1986.
\newblock Theory and algorithms.

\bibitem{2007GongLiLi}
Y.~Gong, B.~Li, and Z.~Li.
\newblock Immersed-interface finite-element methods for elliptic interface
  problems with nonhomogeneous jump conditions.
\newblock {\em SIAM J. Numer. Anal.}, 46(1):472--495, 2007/08.

\bibitem{2016GuoLin}
R.~Guo and T.~Lin.
\newblock {A group of immersed finite element spaces for elliptic interface
  problems}.
\newblock {\em arXiv:1612.00919}, 2016.

\bibitem{2016GuoLinZhang}
R.~Guo, T.~Lin, and X.~Zhang.
\newblock {Nonconforming Immersed Finite Element Spaces for Elliptic Interface
  Problems}.
\newblock {\em arXiv:1612.01862}, 2016.

\bibitem{2017GuzmanSanchezSarkis}
J.~Guzm\'an, M.~A. S\'anchez, and M.~Sarkis.
\newblock A finite element method for high-contrast interface problems with
  error estimates independent of contrast.
\newblock {\em J. Sci. Comput.}, 73(1):330--365, 2017.

\bibitem{2008HeLinLin}
X.~He, T.~Lin, and Y.~Lin.
\newblock Approximation capability of a bilinear immersed finite element space.
\newblock {\em Numer. Methods Partial Differential Equations},
  24(5):1265--1300, 2008.

\bibitem{2012HeLinLin}
X.~He, T.~Lin, and Y.~Lin.
\newblock The convergence of the bilinear and linear immersed finite element
  solutions to interface problems.
\newblock {\em Numer. Methods Partial Differential Equations}, 28(1):312--330,
  2012.

\bibitem{2013HeLinLinZhang}
X.~He, T.~Lin, Y.~Lin, and X.~Zhang.
\newblock Immersed finite element methods for parabolic equations with moving
  interface.
\newblock {\em Numer. Methods Partial Differential Equations}, 29(2):619--646,
  2013.

\bibitem{2013JeonNamSheenShim}
Y.~Jeon, H.~Nam, D.~Sheen, and K.~Shim.
\newblock A class of nonparametric {DSSY} nonconforming quadrilateral elements.
\newblock {\em ESAIM Math. Model. Numer. Anal.}, 47(6):1783--1796, 2013.

\bibitem{2014JiChenLi}
H.~Ji, J.~Chen, and Z.~Li.
\newblock A symmetric and consistent immersed finite element method for
  interface problems.
\newblock {\em J. Sci. Comput.}, 61(3):533--557, 2014.

\bibitem{2005KafafyLinLinWang}
R.~Kafafy, T.~Lin, Y.~Lin, and J.~Wang.
\newblock Three-dimensional immersed finite element methods for electric field
  simulation in composite materials.
\newblock {\em Internat. J. Numer. Methods Engrg.}, 64(7):940--972, 2005.

\bibitem{2007KafafyWang}
R.~Kafafy and J.~Wang.
\newblock Whole ion optics gridlet simulations using a hybrid-grid
  immersed-finite-element particle-in-cell code.
\newblock {\em J. Propulsion Power}, 23(1):59--68, 2007.

\bibitem{1996KloucekLiLuskin}
P.~Klou{\v{c}}ek, B.~Li, and M.~Luskin.
\newblock Analysis of a class of nonconforming finite elements for crystalline
  microstructures.
\newblock {\em Math. Comp.}, 65(215):1111--1135, 1996.

\bibitem{2010KwakWeeChang}
D.~Y. Kwak, K.~T. Wee, and K.~S. Chang.
\newblock An analysis of a broken {$P_1$}-nonconforming finite element method
  for interface problems.
\newblock {\em SIAM J. Numer. Anal.}, 48(6):2117--2134, 2010.

\bibitem{2004LiLinLinRogers}
Z.~Li, T.~Lin, Y.~Lin, and R.~C. Rogers.
\newblock An immersed finite element space and its approximation capability.
\newblock {\em Numer. Methods Partial Differential Equations}, 20(3):338--367,
  2004.

\bibitem{2003LiLinWu}
Z.~Li, T.~Lin, and X.~Wu.
\newblock New {C}artesian grid methods for interface problems using the finite
  element formulation.
\newblock {\em Numer. Math.}, 96(1):61--98, 2003.

\bibitem{2013LinLinZhang1}
T.~Lin, Y.~Lin, and X.~Zhang.
\newblock A method of lines based on immersed finite elements for parabolic
  moving interface problems.
\newblock {\em Adv. Appl. Math. Mech.}, 5(4):548--568, 2013.

\bibitem{2015LinLinZhang}
T.~Lin, Y.~Lin, and X.~Zhang.
\newblock Partially penalized immersed finite element methods for elliptic
  interface problems.
\newblock {\em SIAM J. Numer. Anal.}, 53(2):1121--1144, 2015.

\bibitem{2013LinSheenZhang}
T.~Lin, D.~Sheen, and X.~Zhang.
\newblock A locking-free immersed finite element method for planar elasticity
  interface problems.
\newblock {\em J. Comput. Phys.}, 247:228--247, 2013.

\bibitem{2015LinYangZhang1}
T.~Lin, Q.~Yang, and X.~Zhang.
\newblock {\it A Priori} error estimates for some discontinuous {Galerkin}
  immersed finite element methods.
\newblock {\em J. Sci. Comput.}, 65(3):875--894, 2015.

\bibitem{1992RanacherTurek}
R.~Rannacher and S.~Turek.
\newblock Simple nonconforming quadrilateral {S}tokes element.
\newblock {\em Numer. Methods Partial Differential Equations}, 8(2):97--111,
  1992.

\bibitem{2010VallaghePapadopoulo}
S.~Vallagh{\'e} and T.~Papadopoulo.
\newblock A trilinear immersed finite element method for solving the
  electroencephalography forward problem.
\newblock {\em SIAM J. Sci. Comput.}, 32(4):2379--2394, 2010.

\bibitem{2013ZhangTHESIS}
X.~Zhang.
\newblock {\em Nonconforming {I}mmersed {F}inite {E}lement {M}ethods for
  {I}nterface {P}roblems}.
\newblock 2013.
\newblock Thesis (Ph.D.)--Virginia Polytechnic Institute and State University.

\end{thebibliography}
%\end{thebibliography}

\end{document}